\newtheorem{theorem}{Theorem}[section]
\newtheorem{lemma}[theorem]{Lemma}
\newtheorem{proposition}[theorem]{Proposition}
\newtheorem{corollary}[theorem]{Corollary}
\newtheorem{definition}{Definition}[section]
\newtheorem*{remark}{Remark}
\newtheorem*{remarks}{Remarks}
\numberwithin{equation}{section}
\newcommand{\tef}{transcendental entire function}
\newcommand\qfor{\quad\text{for }}
\newcommand \C{\mathbb{C}}
\newcommand \N{\mathbb{N}}
\newcommand \No{{\mathbb{N}_0}}
\newcommand \R{\mathbb{R}}
\newcommand \Z{\mathbb{Z}}
\def\blfootnote{\xdef\@thefnmark{}\@footnotetext}
\begin{document}
%
%
%
%
\title[The dynamics of quasiregular maps of punctured space]{The dynamics of quasiregular maps of punctured space}
\author{Daniel A. Nicks}
\address{School of Mathematical Sciences \\ University of Nottingham \\ Nottingham
NG7 2RD \\ UK  \\  ORCiD: 0000-0002-9493-2970}
\email{dan.nicks@nottingham.ac.uk}
\author{David J. Sixsmith}
\address{Dept. of Mathematical Sciences \\
University of Liverpool \\
Liverpool L69 7ZL\\
UK \\
ORCiD: 0000-0002-3543-6969}
\email{david.sixsmith@open.ac.uk}

%
%
%
%
\begin{abstract}
The Fatou-Julia iteration theory of rational and transcendental entire functions has recently been extended to quasiregular maps in more than two real dimensions. Our goal in this paper is similar; we extend the iteration theory of analytic self-maps of the \emph{punctured plane} to quasiregular self-maps of \emph{punctured space}.

We define the Julia set as the set of points for which the complement of the forward orbit of any neighbourhood of the point is a finite set. We show that the Julia set is non-empty, and shares many properties with the classical Julia set of an analytic function. These properties are stronger than those known to hold for the Julia set of a general quasiregular map of space.

We define the quasi-Fatou set as the complement of the Julia set, and generalise a result of Baker concerning the topological properties of the components of this set. A key tool in the proof of these results is a version of the fast escaping set. We generalise various results of Mart{\'{\i}}-Pete concerning this set, for example showing that the Julia set is equal to the boundary of the fast escaping set.
\end{abstract}
\maketitle
%
%
%
%
\blfootnote{2010 \itshape Mathematics Subject Classification. \normalfont Primary 37F10; Secondary 30C65, 30D05.}
\blfootnote{Both authors were supported by Engineering and Physical Sciences Research Council grant EP/L019841/1.}
\section{Introduction}
\subsection{Background}
Most studies of complex dynamics have considered analytic maps of either $\C$ or $\widehat{\C} := \C \cup\{\infty\}$; we refer to reference works such as \cite{beardon, MR1216719} for more information on complex dynamics. Various authors have studied the dynamics of analytic maps of the plane that have an additional essential singularity which is also an omitted value. Without loss of generality the singularity can be taken to be at the origin. These maps are known as \emph{transcendental analytic self-maps of $\C^*$}, where $\C^* := \C \setminus \{0\}$ is the \emph{punctured plane}. This study started with R{\aa}dstr{\"o}m \cite{MR0056702}, who pointed out that such maps are necessarily of the form
$$
z \mapsto z^k \exp(g(z) + h(1/z)),
$$
where $k \in \Z$ and $g, h$ are entire functions. Following R{\aa}dstr{\"o}m many authors have contributed to this work; see, for example, \cite{MR1687848, MR1357963, MR1279126, MR955806, MR1120038, MR916928}. We observe that, because of Picard's theorem, there are no analytic self-maps of the complex plane equipped with more than one puncture.

In this paper our goal is, for the first time, to extend this study to quasiregular maps of punctured \emph{space}. We defer the full definition of a quasiregular map to Section~\ref{section:defs}; for now it is sufficient to note that these maps are the natural generalization to higher dimensions of analytic maps in the plane. We set the following definitions in place for the remainder of the paper. Fix the dimension $d \geq 2$. Fix also the number of finite punctures $\nu\in\N$; this is always in addition to a singularity at infinity. We stress that $\nu$ is always taken to be at least one; for more information on quasiregular dynamics in space without punctures we refer to, for example, \cite{MR3009101,MR3265283}. Finally we define a set of punctures. For convenience we fix $y_0 = \infty$, we let $y_1, y_2, \ldots, y_\nu$ be distinct points of $\R^d$, and then we let $S \subset \widehat{\R^d} := \R^d \cup\{\infty\}$ be given by $$S := \{y_0, y_1, y_2, \ldots, y_\nu\}.$$

We are interested in the dynamics of a quasiregular map $f : \widehat{\R^d}\setminus S \to \widehat{\R^d}\setminus S$, with the property that $S$ coincides with the set of essential singularities of $f$. In this situation we say that $f$ is \emph{of \mbox{$S$-transcendental} type}. Note that an \emph{essential singularity} is a point at which no limit of $f$ exists.

It follows from Picard's theorem and Sto\"{\i}low factorization that a quasiregular map from $\R^2$ to $\R^2$ can omit at most one point. Hence, if $d=2$ then we must have $\nu = 1$. However, if $d \geq 3$, then we can take $\nu$ to be arbitrarily large. We demonstrate this as follows. Let $g : \widehat{\R^d} \to \widehat{\R^d}$ be a quasiregular map of degree $\nu + 1$, such that $g^{-1}(\infty) = \{y_0, y_1, y_2, \ldots, y_\nu\}$. Let $F : \R^d \to \R^d\setminus\{y_1, y_2, \ldots, y_\nu\}$ be a quasiregular map with an essential singularity at infinity. (The existence of such a map was shown by Drasin and Pankka \cite{DrasinandPankka}.) Then
$$
  f : \R^d\setminus\{y_1, y_2, \ldots, y_\nu\} \to \R^d\setminus\{y_1, y_2, \ldots, y_\nu\} \quad\text{where}\quad f := F \circ g,
$$
has essential singularities at $\{y_0, y_1, y_2, \ldots, y_\nu\}$, as required. We note that the class of maps such as $F$ above is not small, since if $h : \R^d \to \R^d$ is any quasiregular map, then $F \circ h : \R^d \to \R^d\setminus\{y_1, y_2, \ldots, y_\nu\}$ is also a quasiregular map.
\subsection{The Julia set}
Our definition of the Julia set follows earlier definitions of the Julia set of a quasiregular map by using a version of the so-called \emph{blowing-up property}. 
Suppose that $f:\widehat{\R^d}\setminus S \to \widehat{\R^d} \setminus S$ is a quasiregular map of \mbox{$S$-transcendental} type, that $x \in \widehat{\R^d}$ and that $U \subset \widehat{\R^d}\setminus S$. We define the \emph{backward orbit} of $x$ by
$$
O^-(x) := \bigcup_{k \ge 0} f^{-k}(x),
$$
and we also define the \emph{forward orbit} of $U$ by
$$
O^+(U) := \bigcup_{k\in\N} f^{k}(U).
$$
\begin{definition}\normalfont\label{jdef}
The \emph{Julia set $J(f)$} is defined to be the set of all points $x \in \widehat{\R^d}\setminus S$ such that, for every neighbourhood $U$ of $x$, the set $\widehat{\R^d} \setminus O^+(U)$ is finite.
\end{definition}
It can then be deduced that the Julia set is closed in $\widehat{\R^d}\setminus S$, and is completely invariant. Here we say that a set $X\subset\widehat{\R^d}\setminus S$ is \emph{completely invariant} if $x \in X$ if and only if $f(x) \in X$.

We show that our definition of the Julia set is consistent with the classical definition used for transcendental analytic self-maps of the punctured plane. Recall that the classical definition first defines the \emph{Fatou set} as the set of points that have a neighbourhood in which the set of iterates is a normal family, and then defines the Julia set as the complement of the Fatou set.
\begin{theorem}
\label{theo:Jclassical}
Suppose that $f$ is a transcendental analytic self-map of the punctured plane. Then the classical definition of $J(f)$ agrees with Definition~\ref{jdef}.
\end{theorem}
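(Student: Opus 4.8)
The plan is to prove the two inclusions $J_{\mathrm{cl}}(f)\subseteq J(f)$ and $J(f)\subseteq J_{\mathrm{cl}}(f)$ separately, where I write $J_{\mathrm{cl}}(f)$ for the classical Julia set and $F_{\mathrm{cl}}(f):=\C^*\setminus J_{\mathrm{cl}}(f)$ for the classical Fatou set (here $S=\{0,\infty\}$, so $\widehat{\C}\setminus S=\C^*$). Everything rests on one elementary observation: since $f$ maps $\C^*$ into $\C^*$, every iterate $f^n$ omits the values $0$ and $\infty$, so $O^+(U)\subseteq\C^*$ and hence $\{0,\infty\}\subseteq\widehat{\C}\setminus O^+(U)$ for every $U\subseteq\C^*$. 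I would also recall two classical facts about transcendental analytic self-maps of the punctured plane (see \cite{MR1687848,MR1357963,MR1279126,MR955806,MR1120038,MR916928} and the references therein): $F_{\mathrm{cl}}(f)$ is open and completely invariant, and $J_{\mathrm{cl}}(f)$ is non-empty. Finally I would note that $J_{\mathrm{cl}}(f)$ is infinite: by the big Picard theorem at an essential singularity of $f$, every value of $\C^*$ is assumed by $f$ infinitely often, so $J_{\mathrm{cl}}(f)$, being backward invariant and non-empty, is infinite.

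For the inclusion $J_{\mathrm{cl}}(f)\subseteq J(f)$, I would take $x\in J_{\mathrm{cl}}(f)$, let $U$ be any neighbourhood of $x$, and choose a connected open neighbourhood $U_0$ of $x$ with $U_0\subseteq U$. Since $x\notin F_{\mathrm{cl}}(f)$, the family $\{f^n|_{U_0}\}$ is not normal. Each $f^n|_{U_0}$ already omits $0$ and $\infty$, so if $O^+(U_0)$ omitted some further value $a\in\C^*$, then $\{f^n|_{U_0}\}$ would omit the three values $0,\infty,a$ and hence be normal by Montel's theorem, a contradiction. Therefore $O^+(U_0)=\C^*$, so $O^+(U)\supseteq\C^*$ and $\widehat{\C}\setminus O^+(U)\subseteq\{0,\infty\}$ is finite. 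As $U$ was an arbitrary neighbourhood of $x$, this shows $x\in J(f)$ by Definition~\ref{jdef}.

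For the reverse inclusion it is enough to show that $F_{\mathrm{cl}}(f)$ is contained in the complement of $J(f)$. Given $x\in F_{\mathrm{cl}}(f)$, I would pick a neighbourhood $U$ of $x$ on which $\{f^n\}$ is normal. Complete invariance of $F_{\mathrm{cl}}(f)$ gives $O^+(U)\subseteq F_{\mathrm{cl}}(f)$, so $\widehat{\C}\setminus O^+(U)\supseteq\widehat{\C}\setminus F_{\mathrm{cl}}(f)\supseteq J_{\mathrm{cl}}(f)$, which is infinite. Thus some neighbourhood $U$ of $x$ has $\widehat{\C}\setminus O^+(U)$ infinite, and hence $x\notin J(f)$. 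Combining the two inclusions yields the theorem.

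I do not expect a serious obstacle: the proof is essentially Montel's theorem together with standard facts from the classical theory of analytic self-maps of $\C^*$, with the pair $\{0,\infty\}$ playing the role of the finite exceptional set exactly as in the familiar equivalence of the two definitions of the Julia set for transcendental entire functions. The only points requiring care are the bookkeeping of which values are omitted — ensuring $0$ and $\infty$ are never mistakenly counted among "missing" points that would undermine the Montel argument — and locating in the literature, or supplying in a sentence, the non-emptiness and infinitude of $J_{\mathrm{cl}}(f)$ for this class of maps.
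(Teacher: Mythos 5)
Your proof is correct, but the second half follows a different route from the paper's. Both your argument and the paper's use Montel's theorem to establish the inclusion $J_{\mathrm{cl}}(f)\subseteq J(f)$: a point in the classical Julia set has no neighbourhood on which the iterates (which already omit $0$ and $\infty$) can omit a third value, so the forward orbit of any neighbourhood fills out all of $\C^*$. The two proofs part ways on the reverse direction. The paper takes a point $z\in J_{\mathrm{cl}}(f)$, observes $E(f)=\{0,\infty\}$, and then invokes Theorem~\ref{theo:J}\ref{jback} together with the classical $\C^*$-analogue from Bhattacharyya's thesis to write $J(f)=\overline{O^-(z)}=J_{\mathrm{cl}}(f)$; this is short because it leans on the machinery of Theorem~\ref{theo:J} and a literature citation. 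You instead argue directly: for $x$ in the classical Fatou set, complete invariance of $F_{\mathrm{cl}}(f)$ gives a neighbourhood $U$ with $O^+(U)\subseteq F_{\mathrm{cl}}(f)$, so $\widehat{\C}\setminus O^+(U)$ contains the infinite set $J_{\mathrm{cl}}(f)$ and hence $x\notin J(f)$. Your route is more elementary and self-contained — it needs only non-emptiness (hence, by backward invariance and the big Picard theorem, infinitude) of $J_{\mathrm{cl}}(f)$, rather than the $\overline{O^-(z)}$-characterisation on both sides — at the mild cost of being slightly longer than the paper's one-line appeal to earlier results. Both are valid; the one genuine care point, which you flag correctly, is that the complete invariance and non-emptiness of the classical Fatou/Julia sets for self-maps of $\C^*$ must be drawn from the classical literature.
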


We next define the \emph{exceptional set $E(f)$} as
$$
E(f) := \left\{ x \in \widehat{\R^d} : \operatorname{card }\left(O^-(x)\right) < \infty\right\}.
$$
Clearly $S \subset E(f)$. It is a consequence of a well-known result of Rickman \cite{MR583633} that $E(f)$ is a finite set; see Lemma~\ref{greatpicard} below. 

We now give our principal result regarding the Julia set of a quasiregular map of $S$-transcendental type. Note that here, and elsewhere in the paper, the topological operations of closure, complement and boundary are taken with respect to $\widehat{\R^d}\setminus S$ unless otherwise specified. Also, if a set $X \subset \widehat{\R^d} \setminus S$ is such that the closure of $X$ in $\widehat{\R^d}$ meets $S$, then we say that $X$ is \emph{$S$-unbounded}; otherwise we say that $X$ is \emph{$S$-bounded}.
\begin{theorem}
\label{theo:J}
Suppose that $f : \widehat{\R^d}\setminus S \to \widehat{\R^d} \setminus S$ is a quasiregular map of \mbox{$S$-transcendental} type. Then the following hold.
\begin{enumerate}[label=(\alph*)]
\item The Julia set of $f$ is non-empty and perfect.\label{jperfect}
\item For each $x \in J(f) \setminus E(f)$, we have $J(f) = \overline{O^-(x)}$.\label{jback}
\item We have $J(f) = J(f^p)$, for $p\in\N$.\label{jnice}
\item Either $J(f)$ is connected or $J(f)$ has infinitely many components.\label{jconn}
\item All components of $J(f)$ are $S$-unbounded.\label{jnoSbounded}
\end{enumerate}
\end{theorem}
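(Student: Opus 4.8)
The plan is to establish parts~\ref{jperfect}--\ref{jnoSbounded}, though not in the stated order: I would first prove~\ref{jback}, then~\ref{jnice}, then~\ref{jperfect}, and finally~\ref{jnoSbounded} and~\ref{jconn}. The three recurring ingredients are: the blowing-up property built into Definition~\ref{jdef}; the finiteness of $E(f)$ (Lemma~\ref{greatpicard}) together with the big Picard theorem for quasiregular maps, which guarantees that in every punctured neighbourhood of a point of $S$ the map $f$ omits at most finitely many values; and the version of the fast escaping set $A(f)$ and the Baker-type description of quasi-Fatou components that are developed elsewhere in the paper --- in particular that $J(f)=\partial A(f)$, that $A(f)$ has no $S$-bounded component, and that a quasi-Fatou component which surrounds part of $J(f)$ is necessarily $S$-bounded and escaping. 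A useful observation is that $x\in J(f)$ as soon as some forward image of a neighbourhood of $x$ contains a punctured neighbourhood of a point of $S$, since one further application of $f$ and the big Picard theorem then make the forward orbit cofinite.

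For~\ref{jback}, which drives the rest, the inclusion $\overline{O^-(x)}\subseteq J(f)$ is immediate: complete invariance gives $f^{-1}(J(f))=J(f)$, hence $O^-(x)\subseteq J(f)$, and $J(f)$ is closed. For the converse, fix $y\in J(f)$ and a neighbourhood $U$ of $y$; the set $\widehat{\R^d}\setminus O^+(U)$ is finite, so the infinite backward orbit $O^-(x)$ (infinite since $x\notin E(f)$) is not contained in it, and some $w\in f^{-m}(x)$ lies in $O^+(U)$. Writing $f^k(u)=w$ with $u\in U$ gives $u\in f^{-(k+m)}(x)\cap U$, whence $y\in\overline{O^-(x)}$. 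Part~\ref{jnice} follows from this: a short argument with the big Picard theorem shows that $f^p$ is again of $S$-transcendental type and that $J(f^p)$ is completely invariant under $f$; combining this with the trivial inclusion $J(f^p)\subseteq J(f)$ (from $O^+_{f^p}(U)\subseteq O^+_f(U)$) and part~\ref{jback} applied to both $f$ and $f^p$ at a common point of $J(f^p)\setminus(E(f)\cup E(f^p))$ yields $J(f)=\overline{O^-_f(w)}\subseteq J(f^p)$, hence equality.

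Non-emptiness in~\ref{jperfect} is the first genuine obstacle. I would deduce it from the fast escaping set: a construction near the essential singularities, using the big Picard theorem, produces fast escaping points, so $A(f)\neq\emptyset$; and $A(f)\neq\widehat{\R^d}\setminus S$ because $f$ has non-escaping periodic points, which exist by a value-distribution argument near the essential singularities. Since $\widehat{\R^d}\setminus S$ is connected, $J(f)=\partial A(f)$ is non-empty; the same circle of ideas shows $J(f)$ is infinite, in particular $J(f)\not\subseteq E(f)$, a finite completely invariant set being incompatible with the blowing-up property. Given this, perfectness of $J(f)$ is then standard, via complete invariance and part~\ref{jback} (an isolated point of $J(f)$ would have an entire backward orbit of points isolated in $J(f)$, which is incompatible with~\ref{jback}).

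Finally,~\ref{jnoSbounded} and~\ref{jconn} are where the quasi-Fatou and fast-escaping machinery does the work, and I expect~\ref{jnoSbounded} to be the hardest step. For~\ref{jnoSbounded}, suppose a component $K$ of $J(f)$ is $S$-bounded. Then $K$ is compact and, as a compact component of the closed set $J(f)$ in the locally compact space $\widehat{\R^d}\setminus S$, it admits a connected open neighbourhood $V$ with $\overline V$ compact in $\widehat{\R^d}\setminus S$, $K\subseteq V$ and $\partial V\subseteq\widehat{\R^d}\setminus S\setminus J(f)$. Every image $f^n(\overline V)$ is then a compact subset of $\widehat{\R^d}\setminus S$, yet the blowing-up property at a point of $K$ forces $\bigcup_n f^n(V)$ to be cofinite in $\widehat{\R^d}$, so the sets $f^n(V)$ come arbitrarily close to every point of $S$. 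The contradiction should come from the barrier $\partial V$: it lies in the quasi-Fatou set and so has tame orbit, while $K\subseteq\partial A(f)$ meets $A(f)$ arbitrarily close to $K$ and $A(f)$ has no $S$-bounded component, so a component of $A(f)$ meeting $V$ must cross $\partial V$, and the Baker-type structure of the surrounding quasi-Fatou component then rules out the configuration. For~\ref{jconn}, by~\ref{jnice} we may pass to an iterate; if $J(f)$ had $k$ components with $2\le k<\infty$, then, each being relatively clopen in $J(f)$ and mapping into a single component under the continuous $f$, a further iterate would leave every component invariant, and an invariant proper subcomponent of $J(f)$ would be surrounded by a quasi-Fatou component whose iterates, by the Baker-type theorem, are $S$-bounded, escaping, and wrap around more and more of $J(f)$, forcing infinitely many components --- a contradiction. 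The main difficulties I anticipate are the separation argument and the precise notion of ``surrounding'' in~\ref{jnoSbounded}, and, throughout, keeping careful track of which quasiregular analogues of the classical normal-family and value-distribution results are actually available.
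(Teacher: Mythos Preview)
Your argument for \ref{jback} is correct and in fact more direct than the paper's: the paper routes through an auxiliary set $J_{\operatorname{cap}}(f)$ defined by a capacity condition (needed elsewhere), whereas you use only that $O^-(x)$ is infinite and $\widehat{\R^d}\setminus O^+(U)$ is finite.

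There are, however, genuine gaps in \ref{jnice}, \ref{jnoSbounded}, and \ref{jconn}.

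For \ref{jnice}, the assertion that $J(f^p)$ is completely invariant \emph{under $f$} is essentially the conclusion you are trying to prove; a priori one only knows complete invariance under $f^p$. Concretely, if $x\in J(f^p)$ and $V$ is a neighbourhood of $f(x)$, the cofiniteness of $\bigcup_{k\ge 1}f^{pk}(f^{-1}(V))$ gives information about iterates $f^{pk-1}(V)$, which lie in the wrong residue class modulo $p$. The paper resolves this by first using the blowing-up property of $f$ to find $m$ with $f^m(U)\cap J(f^p)\ne\emptyset$, writing $m=pk-\ell$ with $0\le\ell<p$, and then carefully unwinding; your sketch does not supply this step.

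For \ref{jnoSbounded}, from $\partial V\cap A(f)\ne\emptyset$ no contradiction follows, since $A(f)$ can meet $QF(f)$: indeed, by Corollary~\ref{corr:QF}, whole quasi-Fatou components can lie in some $A_e(f)$. The paper's key extra ingredient is Corollary~\ref{corr:J}, which produces an itinerary $e$ with $A_e(f)\subset J(f)$; running the argument with this $A_e(f)$ (using $J(f)=\partial A_e(f)$ and the $S$-unboundedness of components of $A_e(f)$) yields $\partial V\cap J(f)\ne\emptyset$, which \emph{is} a contradiction. Your fallback appeal to the ``Baker-type structure of the surrounding quasi-Fatou component'' (Theorem~\ref{theo:F}) would be circular, as that theorem is proved using \ref{jnoSbounded}.

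For \ref{jconn}, the paper's argument is much shorter than what you propose and needs no fast-escaping or quasi-Fatou machinery at all: after passing to an iterate so that each of the finitely many components $J_1,\ldots,J_n$ is forward invariant, the open set $G:=(\widehat{\R^d}\setminus S)\setminus J_1$ satisfies $f(G)\subset G$, so $O^+(U)\subset G$ for any $U\subset G$ and hence $O^+(U)$ omits the infinite set $J_1$; since $J_2\subset G$, this contradicts $J_2\subset J(f)$ directly from Definition~\ref{jdef}.
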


Clearly, it follows from \ref{jperfect} and \ref{jnoSbounded} that $J(f)$ has a connected component that contains at least two points. We can deduce that the Julia set has Hausdorff dimension at least one.

\begin{remarks}\normalfont\mbox{}

\begin{enumerate}
\item It follows from Theorem~\ref{theo:Jclassical} that Theorem~\ref{theo:J}\ref{jconn} and \ref{jnoSbounded} are generalisations to quasiregular maps of Baker and Dom{\'{\i}}nguez's results \cite[Theorem 3 and Theorem 2]{MR1687848}, which concern transcendental analytic self-maps of $\C^*$.

\item The dynamics of general quasiregular self-maps of $\R^d$ and $\widehat{\R^d}$ is the subject of \cite{MR3009101,MR3265283}, where a slightly weaker definition of the Julia set is adopted. In this definition the complement of the forward orbit of any neighbourhood is constrained only to be ``small''; see Definition~\ref{jcapdef} in Section~\ref{section:julia} below. In the cases studied in \cite{MR3009101,MR3265283}, the Julia set is generally non-empty, but otherwise properties analogous to Theorem~\ref{theo:J}\ref{jperfect}--\ref{jnice} are only known to hold under additional hypotheses such as Lipschitz continuity. Even with this extra assumption, it is only  known that the Hausdorff dimension of the Julia set is positive.

\item The dynamics of \emph{local uniformly} quasiregular maps of punctured \emph{manifolds} was studied by Okuyama and Pankka in \cite{MR3220456}.
\end{enumerate}
\end{remarks}
%
%
%
\subsection{The quasi-Fatou set}
Following \cite{SixsmithNicks1, SixsmithNicks2, SixsmithNicks3}, we define the \emph{quasi-Fatou set $QF(f)$} as the complement of the Julia set (recall that complements are taken in $\widehat{\R^d}\setminus S$). It is straightforward to show that the quasi-Fatou set is an open, completely invariant set which, if non-empty, has the Julia set as its boundary. We call the connected components of $QF(f)$ the \emph{quasi-Fatou components}.

Baker \cite[Theorem 1]{MR951969} (see also \cite[Theorem 1]{MR1687848} and \cite{MR916928}) showed that, for a transcendental analytic self-map of the punctured plane, all the components of the Fatou set are simply-connected, apart from at most one, which, if it exists, must be doubly-connected. In view of Theorem~\ref{theo:Jclassical}, our next result is a generalisation of this fact. Here, if a set $X \subset \R^d\setminus S$ is such that all complementary components of $X$ are $S$-unbounded, then we say that $X$ is \emph{$S$-full}; otherwise we say that $X$ is \emph{$S$-hollow}.
\begin{theorem}
\label{theo:F}
Suppose that $f : \widehat{\R^d}\setminus S \to \widehat{\R^d} \setminus S$ is a quasiregular map of \mbox{$S$-transcendental} type. Then all components of $QF(f)$ are $S$-full. Moreover there are at most $\nu$ components of $QF(f)$ which have more than one complementary component in $\widehat{\R^d}$.
\end{theorem}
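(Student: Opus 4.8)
The plan is to follow Baker's strategy for analytic self-maps of $\C^*$, but to replace the use of normal families and the hyperbolic metric with the topological and covering-type tools available for quasiregular maps, together with the machinery developed earlier in the paper (in particular the characterisation of $J(f)$ via the blowing-up property and the fast escaping set). First I would establish the key local statement: if $U$ is a quasi-Fatou component and $V$ is a bounded complementary component of $U$ in $\widehat{\R^d}$, then $\partial V \subset J(f)$ and, by the blowing-up property applied along $\partial V$, the forward orbit of $V$ eventually covers all of $\widehat{\R^d}$ except a finite set; hence some iterate $f^n(V)$ must contain a point of $S$, for otherwise $f^n$ would be a well-defined quasiregular map on a neighbourhood of $\overline{V}$ with $f^n(\partial V)$ bounded away from $S$ while $f^n(V)$ is $S$-unbounded, contradicting a maximum-principle/degree argument (a bounded quasiregular map on a domain attains its "outer" values on the boundary). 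This forces $V$ to "swallow" a puncture after finitely many steps.

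Next I would set up a counting argument to bound the number of "$S$-hollow" components. Call a quasi-Fatou component $U$ hollow if it has a bounded complementary component in $\widehat{\R^d}$. For each hollow $U$, the previous paragraph assigns to $U$ a smallest $n = n(U)$ and a puncture $y_{j(U)} \in S \setminus \{y_0\}$ with $y_{j(U)} \in f^{n(U)}(V_U)$ for some bounded complementary component $V_U$ of $U$. The crucial injectivity claim is that the assignment $U \mapsto y_{j(U)}$ is at most $\nu$-to-one in total — in fact I would aim to show it is injective on the hollow components after passing to the right "level", using that distinct quasi-Fatou components have disjoint forward orbits in the following weak sense: if $U_1, U_2$ are distinct hollow components, their bounded complementary components $V_{U_1}, V_{U_2}$ are disjoint, and one shows that the filled-in sets cannot both map onto neighbourhoods of the same puncture at compatible times without the components themselves coinciding. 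Complete invariance of $QF(f)$ and of $J(f)$ (Theorem~\ref{theo:J}) plus the finiteness of $E(f)$ feed directly into this.

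The main obstacle, as in Baker's original argument, is controlling what happens under iteration: a complementary component $V$ of $U$ need not map to a complementary component of $f(U)$ in any naive way, because $f$ is not injective and space is not simply connected once punctures are removed. To handle this I would work with "fillings": for a quasi-Fatou component $U$ and a complementary component $V$, consider $U \cup V$ and track how such fillings behave under $f$, using that $f$ is an open, discrete, quasiregular map so that images of domains are domains and boundaries map into boundaries, combined with the fact (from the blowing-up property) that $J(f)$ cannot be contained in any $S$-bounded set. The $S$-fullness conclusion — all components of $QF(f)$ are $S$-full — then follows because a hollow component with a complementary component that were $S$-bounded would, after the swallowing step, produce a contradiction with $f$ being a self-map of $\widehat{\R^d} \setminus S$; and the quantitative bound "at most $\nu$ such components" falls out of the counting in the second paragraph, since there are only $\nu$ finite punctures available to be swallowed and $y_0 = \infty$ plays no role in bounded complementary components. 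I expect the delicate point to be proving the at-most-$\nu$-to-one property rigorously rather than just heuristically, which will require a careful argument that two distinct hollow quasi-Fatou components cannot be "linked" through the same puncture.
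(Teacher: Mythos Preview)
Your approach is dynamical and tries to track how bounded complementary components evolve under iteration, eventually ``swallowing'' a puncture, and then to count punctures against hollow components. The paper's proof is entirely different and far shorter: it is purely topological and uses no dynamics at all beyond invoking Theorem~\ref{theo:J}\ref{jnoSbounded}, already proved via the fast escaping set. For $S$-fullness, one simply observes that an $S$-bounded complementary component $X$ of a quasi-Fatou component must meet $J(f)$ (since $\partial X\subset J(f)$) and hence contain an entire component of $J(f)$; that component is then $S$-bounded, contradicting Theorem~\ref{theo:J}\ref{jnoSbounded}. For the bound by $\nu$, if $\nu+1$ distinct quasi-Fatou components each had at least two complementary components in $\widehat{\R^d}$, an easy induction shows their union has at least $\nu+2$ complementary components in $\widehat{\R^d}$; since $|S|=\nu+1$, one of these misses $S$ and is thus $S$-bounded, and the same contradiction applies.

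Your proposal has a genuine gap precisely where you flag it: the injectivity (or even $\nu$-to-one-ness) of the map $U\mapsto y_{j(U)}$ is not established, and there is no obvious reason two distinct hollow components cannot have bounded complementary pieces whose forward orbits both accumulate at the same puncture. The ``maximum-principle/degree argument'' you invoke is also not available in this setting in the form you need: $f^n$ restricted to a neighbourhood of $\overline V$ is quasiregular, but there is no clean maximum principle forcing $f^n(V)$ to be $S$-bounded just because $f^n(\partial V)$ is. More fundamentally, you are re-proving from scratch something that Theorem~\ref{theo:J}\ref{jnoSbounded} already gives you for free. Once you know every Julia component is $S$-unbounded, both parts of Theorem~\ref{theo:F} are two-line topological corollaries, and no orbit-tracking or puncture-assignment is needed.
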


Suppose that $d=2$, that $S = \{0, \infty\}$ (in which case $\nu=1$), and that $U \subset \C^*$ is a domain that is $S$-full. Set $W = \widehat{\C} \setminus U$. Then either $W$ is connected, in which case $U$ is simply-connected, or $W$ has two components (one containing $\infty$ and one containing $0$), in which case $U$ is doubly-connected. It follows that Theorem~\ref{theo:F} is indeed a generalisation of \cite[Theorem 1]{MR951969}. We observe that, unlike Baker, we are not able to use normal family arguments in the quasi-Fatou components.

\subsection{The fast escaping set}
A key tool in the proof of the above results is the \emph{fast escaping set}. This was first defined, for a {\tef}, in \cite{MR1684251}, and a detailed study of this set was given in \cite{Rippon01102012}. See also \cite{MR3215194, MR3265357}, which studied the fast escaping set of a quasiregular map of $\R^d$ of transcendental type.  When~$f$ is a function defined on $\C$ or $\R^d$, the fast escaping set is roughly the set of points~$x$ for which $|f^n(x)|$ eventually grows faster than some iterated maximum modulus. Here this refers to iteration of the \emph{maximum modulus} $M(r,f):=\max_{|x|=r}|f(x)|$  as a function of $r>0$.

One motivation for studying the fast escaping set is its intimate connection to the Julia set. In particular, for transcendental entire functions, the boundary of the fast escaping set is the Julia set. The same result also holds for suitable analytic self-maps of $\C^*$ and for many quasiregular self-maps of $\R^d$. We aim to establish the analogous result for quasiregular maps of $S$-transcendental type; see Theorem~\ref{theo:AJ} below.

The fast escaping set of a transcendental analytic self-map of $\C^*$ was first studied in \cite{DMP1}; indeed, our work regarding the fast escaping set is, in a sense, a generalisation of the results of \cite{DMP1} to quasiregular maps of $S$-transcendental type. Our method of definition cannot match that used in \cite{DMP1} exactly, since that definition is given using the maximum and minimum modulus functions, and these are less useful in our setting. Instead we define a family of functions each of which is, in some sense, a generalised maximum modulus function.

Recall that our set of punctures is $S=\{y_0, y_1, \ldots, y_\nu\}$, with $y_0=\infty$. Let $\mathcal{P} := \{0, 1, \ldots, \nu\}$.  For each $j \in \mathcal{P}$, we  define the \emph{generalised modulus function} on $\widehat{\R^d}$ by
$$|x|_j : = \begin{cases} |x|, &\mbox{if }j=0, \\ \dfrac{1}{|x-y_j|}, &\mbox{if }j>0. \end{cases}  $$

Next, we fix $\rho_S > 0$ sufficiently large that
\begin{equation}
\label{rhoSdef}
\{ x \in \widehat{\R^d} : |x|_j \geq \rho_S \} \cap S = \{y_j \}, \qfor j \in\mathcal{P}.
\end{equation}
Note that this definition of $\rho_S$ will be in place throughout the paper. Then, for each $j, k \in \mathcal{P}$, we define the \emph{generalised maximum modulus function} by
\begin{equation}
\label{mukdef}
M_{j,k}(r, f) := \max_{\{ x : |x|_j = r\}} |f(x)|_k, \qfor r>\rho_S.
\end{equation}
Roughly speaking, the function $M_{j,k}$ considers the maximum size of $f$ (compared to the $k$th essential singularity) considered at points near the $j$th essential singularity.

We shall now briefly outline the idea behind the definition of the fast escaping set in our present setting; we defer the precise definition to Section~\ref{section:A}.
It is useful to let $\No := \N \cup \{0\}$ and to call sequences $e=e_0 e_1 \ldots \in \mathcal{P}^{\No}$ \emph{itineraries}. Suppose that $R > 0$ and $e\in \mathcal{P}^{\No}$. We define the \emph{maximum modulus sequence for $e$ starting at $R$} by first setting $R_0 = R$, and then letting
\begin{equation}
\label{Rndef}
R_n := M_{{e_{n-1}},e_{n}}(R_{n-1},f), \qfor n\in\N.
\end{equation}
(We shall see in Section~\ref{section:A} that if $R$ is sufficiently large, then we can guarantee that $R_{n-1}>\rho_S$, so that $R_n$ can indeed be defined by \eqref{Rndef}.) For a given itinerary $e$, the \emph{little fast escaping set} $A_e(f)$ is roughly the set of points $x$ for which $|f^n(x)|_{e_n}$ grows faster than some maximum modulus sequence for $e$. The \emph{fast escaping set} is then defined to be the union
\[ A(f):= \bigcup_{e \in \mathcal{P}^{\No}} A_e(f). \]

\begin{theorem}
\label{theo:A1}
Suppose that $f : \widehat{\R^d}\setminus S \to \widehat{\R^d} \setminus S$ is a quasiregular map of \mbox{$S$-transcendental} type and that $e \in \mathcal{P}^{\No}$. Then $A_e(f)$ is non-empty and all components of $A_e(f)$ are $S$-unbounded.
\end{theorem}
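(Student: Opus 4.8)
The plan is to reduce both assertions to a single geometric input about $f$-preimages of neighbourhoods of the punctures; then the $S$-unboundedness of components is point-set topology, while non-emptiness combines a compactness argument in $\widehat{\R^d}$ with the blowing-up property. By the analysis of Section~\ref{section:A} (with $R$ fixed sufficiently large), $A_e(f)$ is the increasing union over $\ell\in\No$ of sets $A^{(\ell)}$, where each $A^{(\ell)}$ is a \emph{decreasing} intersection $\bigcap_{n\ge\ell}F^{(\ell)}_n$ of closed sets with
\[
F^{(\ell)}_n \;=\; f^{-n}\bigl(\{\,y\in\widehat{\R^d}:|y|_{e_n}\ge R^{(\ell)}_{n-\ell}\,\}\bigr),
\]
the numbers $R^{(\ell)}_m>\rho_S$ being the maximum modulus sequence starting at $R$ for the shifted itinerary $e_\ell e_{\ell+1}\cdots$. (The nesting relies on the monotonicity of $r\mapsto M_{j,k}(r,f)$ together with the comparison $|f(y)|_k\le M_{j,k}(|y|_j,f)$, which applies because $R^{(\ell)}_m\ge R>\rho_S$ forces the relevant orbits to stay in the regions $\{|y|_{e_n}\ge\rho_S\}$.) Accepting this structure from Section~\ref{section:A}, I concentrate on what remains.

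\emph{Per-level lemma.} For every $j\in\mathcal{P}$, every $m\in\No$ and every $r>\rho_S$, the set $f^{-m}\bigl(\{y:|y|_j\ge r\}\bigr)$ is non-empty and all its components are $S$-unbounded. Non-emptiness holds since $f^m$ omits only finitely many values (by Lemma~\ref{greatpicard}) whereas $\{y:|y|_j\ge r\}$ is infinite. For the rest, note from \eqref{rhoSdef} that $Y:=\{y:|y|_j\ge r\}$ is a closed connected neighbourhood of $y_j$ in $\widehat{\R^d}$ meeting $S$ only in $y_j$. Suppose $K$ were an $S$-bounded component of $f^{-m}(Y)$; then $K$ is a compact subset of $\widehat{\R^d}\setminus S$ and $\partial K\subseteq f^{-m}\bigl(\{y:|y|_j=r\}\bigr)$. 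Put $g:=f^m$ if $j=0$, and $g:=\iota\circ(f^m-y_j)$ with $\iota(y):=y/|y|^2$ if $j>0$; since $f^m$ never takes a value in $S$ --- in particular never the value $y_j$ --- the map $g$ is a well-defined non-constant quasiregular map with $|g|=|f^m|_j$, hence $g$ is open. As a non-constant open map has no interior maximum of its modulus, we get $\max_K|g|=\max_{\partial K}|g|=r$ (we may assume $\partial K\ne\emptyset$, since otherwise $K=\widehat{\R^d}\setminus S$, which is $S$-unbounded), so $|g|\equiv r$ on $K$. But openness of $g$ produces points at which $|g|>r$ arbitrarily close to any point of $K$; such points lie in $f^{-m}(Y)$ and hence in $K$, contradicting $|g|\equiv r$ on $K$.

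\emph{All components of $A_e(f)$ are $S$-unbounded.} First, each $A^{(\ell)}=\bigcap_nF^{(\ell)}_n$ has only $S$-unbounded components. Indeed, let $C$ be an $S$-bounded component; then $C$ is compact in $\widehat{\R^d}\setminus S$, and (by the standard fact that a compact component of a closed set can be enclosed in an arbitrarily small relatively open set whose boundary misses the set) there is a relatively open $S$-bounded $V$ with $C\subseteq V$ and $\partial V\cap A^{(\ell)}=\emptyset$. Then $(\partial V\cap F^{(\ell)}_n)_n$ is a decreasing sequence of compacta with empty intersection, so $\partial V\cap F^{(\ell)}_N=\emptyset$ for some $N$; the component of $F^{(\ell)}_N$ containing $C$ is connected, meets $V$ and avoids $\partial V$, hence lies in $V$ and is $S$-bounded --- contradicting the per-level lemma. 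Now let $G$ be any component of $A_e(f)$, pick $x\in G$, and choose $\ell_0$ with $x\in A^{(\ell_0)}$. The component $D$ of $A^{(\ell_0)}$ containing $x$ is $S$-unbounded by the previous paragraph, and $D$ is a connected subset of $A_e(f)$ containing $x$, so $D\subseteq G$; hence $G$ is $S$-unbounded.

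\emph{Non-emptiness.} It suffices to show $A^{(0)}=\bigcap_nF^{(0)}_n\ne\emptyset$. The sets $F^{(0)}_n$ are nested, non-empty, with all components $S$-unbounded, and are contained in $F^{(0)}_0=\{x:|x|_{e_0}\ge R^{(0)}_0\}$, which is a neighbourhood of $y_{e_0}$ disjoint from neighbourhoods of the other punctures. Hence $\overline{F^{(0)}_n}=F^{(0)}_n\cup\{y_{e_0}\}$ (closure in $\widehat{\R^d}$) is a continuum, these decrease, and so $L:=\bigcap_n\overline{F^{(0)}_n}=A^{(0)}\cup\{y_{e_0}\}$ is a non-empty compact connected set. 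It remains only to show $L\ne\{y_{e_0}\}$, equivalently that the sets $F^{(0)}_n$ all meet some fixed compact subset of $\widehat{\R^d}\setminus S$. This is the substantive step: one must show that the forward iterates $f^n(U)$ of a neighbourhood $U$ of a point of $J(f)\setminus E(f)$ (non-empty by Theorem~\ref{theo:J}) eventually reach the shrinking neighbourhoods $\{y:|y|_{e_n}\ge R^{(0)}_n\}$ of the essential singularities, so that pulling back yields a point of $F^{(0)}_n$ in a fixed neighbourhood of $J(f)$. This mirrors the corresponding, already delicate, arguments for transcendental entire functions in \cite{Rippon01102012} and for analytic self-maps of $\C^*$ in \cite{DMP1}; the key is to balance the blowing-up property near the essential singularities (a consequence of Rickman's theorem) against the growth of the maximum modulus sequence from Section~\ref{section:A}, and I expect this to be the main obstacle. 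The other ingredients --- the maximum principle for the modulus of a quasiregular map, and the point-set topology above --- are routine.
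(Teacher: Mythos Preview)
The central gap is the claimed nesting $F^{(\ell)}_{n+1}\subset F^{(\ell)}_n$. Your justification is circular: the inequality $|f(y)|_k\le M_{j,k}(|y|_j,f)$ needs $|y|_j>\rho_S$, but for $x\in F^{(\ell)}_{n+1}$ you only know that $|f^{n+1}(x)|_{e_{n+1}}$ is large, and this says nothing about $|f^n(x)|_{e_n}$. In fact the nesting is \emph{false}: since $f$ has an essential singularity at every $y_j$, there are points $z$ arbitrarily close to $y_j$ (with $j\ne e_n$) for which $f(z)$ lies in $\{|y|_{e_{n+1}}\ge R^{(\ell)}_{n+1-\ell}\}$; any $x$ with $f^n(x)=z$ then lies in $F^{(\ell)}_{n+1}\setminus F^{(\ell)}_n$. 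Without nesting, your compactness argument for $S$-unboundedness collapses (the finite intersection property only yields $\partial V\cap\bigcap_{n\le N}F^{(\ell)}_n=\emptyset$, and a finite intersection of sets with $S$-unbounded components need not have $S$-unbounded components), and the non-emptiness argument via decreasing continua likewise fails. The paper confronts exactly this difficulty: rather than the full preimage sets, it tracks the \emph{components} $L_{n,j}$ of $f^{-j}(\{|x|_{e_n}\ge R_n\})$ through a fixed point $\xi$ and proves inductively, using connectedness and an intermediate-value step against the growth condition \eqref{smallmaxmod}, that $L_{n,j}\subset\{|x|_{e_{n-j}}\ge R_{n-j}\}$; this is where the real work lies.

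Two further points. Your per-level lemma is correct, but the step ``such points lie in $f^{-m}(Y)$ and hence in $K$'' is not justified: components of a closed set can accumulate on one another, so a point of $f^{-m}(Y)$ near $K$ need not be in $K$. The paper obtains this lemma directly from Proposition~\ref{bflmpunctured}. Finally, your non-emptiness argument is left essentially unproved, and the route you sketch (blowing-up from $J(f)$) is not the paper's. The paper instead uses the covering result Lemma~\ref{lemma:cov} to build annuli $\overline{A_{e_n}(r_n,2r_n)}$ with $f(\overline{A_{e_n}(r_n,2r_n)})\supset\overline{A_{e_{n+1}}(r_{n+1},2r_{n+1})}$ and $r_{n+1}>M_{e_n,e_{n+1}}(r_n,f)$, then applies Lemma~\ref{lemm:exists}; this directly produces a point of $A^0_e(f,R)$ without appealing to $J(f)$ at all.
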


Our second result concerning the little fast escaping sets provides the crucial connection between these sets and the Julia set.
\begin{theorem}
\label{theo:AJ}
Suppose that $f : \widehat{\R^d}\setminus S \to \widehat{\R^d} \setminus S$ is a quasiregular map of \mbox{$S$-transcendental} type and that $e \in \mathcal{P}^{\No}$. Then
\begin{equation}
\label{JAeq}
J(f) = \partial A_e(f) = \partial A(f).
\end{equation}
\end{theorem}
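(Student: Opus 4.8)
The plan is to prove, for each itinerary $e\in\mathcal{P}^{\No}$, that $J(f)=\partial A_e(f)$ by establishing the two inclusions separately, and then to deduce $\partial A(f)=J(f)$ using $A_e(f)\subseteq A(f)$ together with the connectedness of quasi-Fatou components. A preliminary observation, to be checked directly from the definition of $A_e(f)$ in Section~\ref{section:A}, is that each $A_e(f)$ is completely invariant and depends only on the tail of $e$: for backward invariance one replaces the level $L$ in the defining condition by $L+1$, and for forward invariance one absorbs the shift of the maximum modulus sequence $(R_n)$ into the freedom in $L$.

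I would first establish $J(f)\subseteq\overline{A_e(f)}$, which is straightforward given complete invariance. By Theorem~\ref{theo:A1}, $A_e(f)$ is non-empty, hence infinite (its components are $S$-unbounded), so it is not contained in the finite exceptional set $E(f)$. Given $x\in J(f)$ and any neighbourhood $U$ of $x$, the blowing-up property in Definition~\ref{jdef} lets us pick $z\in A_e(f)\cap f^n(U)$ for some $n$, and then any $u\in U$ with $f^n(u)=z$ lies in $f^{-n}(A_e(f))=A_e(f)$, so $U$ meets $A_e(f)$. To upgrade this to $J(f)\subseteq\partial A_e(f)$ it remains to see that $\operatorname{int}A_e(f)$ misses $J(f)$: if it met $J(f)$, then applying blowing-up to an open subset of $A_e(f)$ meeting $J(f)$, and using the forward invariance of $A_e(f)$, would force $A_e(f)$ to omit only finitely many points of $\widehat{\R^d}\setminus S$; but $A_e(f)$ lies in the escaping set, which is not co-finite (a routine fact, following for instance from the existence of a non-exceptional periodic point). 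Hence $J(f)\subseteq\overline{A_e(f)}\setminus\operatorname{int}A_e(f)=\partial A_e(f)$.

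The substantial inclusion is $\partial A_e(f)\subseteq J(f)$, which reduces to showing that every quasi-Fatou component $W$ meeting $A_e(f)$ is contained in $A_e(f)$; this is the step where, classically, one invokes Montel-type normality, and here I would replace it by distortion estimates in the quasihyperbolic metric. Writing $W=W_0$ and letting $W_n$ be the quasi-Fatou component containing $f^n(W)$, so $f(W_n)\subseteq W_{n+1}$, a point $z\in W\cap A_e(f)$ has $f^n(z)$ lying deep inside the region $\{\,|x|_{e_n}\ge\rho_S\,\}$ around the puncture $y_{e_n}$; since $\partial W_n\subseteq J(f)$ and every component of $J(f)$ is $S$-unbounded (Theorem~\ref{theo:J}\ref{jnoSbounded}), $W_n$ cannot engulf a full punctured neighbourhood of $y_{e_n}$, so on $W_n$ the quasihyperbolic distance to $f^n(z)$ controls $|\cdot|_{e_n}$ from below. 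A quasihyperbolic distortion inequality for the quasiregular map $f\colon W_n\to W_{n+1}$ then bounds $k_{W_{n+1}}(f^{n+1}(z),f^{n+1}(y))$ in terms of $k_{W_n}(f^n(z),f^n(y))$ with a loss that, after $n$ iterations, is at most exponential in $n$ — whereas $R_n$ grows like an $n$-fold iterated maximum modulus, far faster. Thus for any fixed $y\in W$ the point $f^n(y)$ is still forced into $\{\,|x|_{e_n}\ge R_n'\,\}$ for an admissible maximum modulus sequence $(R_n')$, giving $y\in A_e(f)$ and hence $W\subseteq A_e(f)$.

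Granting this key lemma, the statement for $A(f)$ follows quickly: the previous two parts give $J(f)=\partial A_e(f)\subseteq\overline{A(f)}$, and $J(f)\cap\operatorname{int}A(f)=\emptyset$ by the same blowing-up argument applied to $A(f)$ (which is forward invariant and lies in the escaping set); conversely, if $x\in\partial A(f)\cap QF(f)$, choose a connected neighbourhood $U$ of $x$ inside $QF(f)$, note that $U$ lies in one quasi-Fatou component $W$ which meets $A(f)=\bigcup_e A_e(f)$ and hence some $A_e(f)$, so $W\subseteq A_e(f)\subseteq A(f)$ and $x\in\operatorname{int}A(f)$, a contradiction. The main obstacle is exactly the third step: making rigorous, without normal families, that membership of $A_e(f)$ propagates from a single point of a quasi-Fatou component to the whole component. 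The delicate points will be the geometric control of the components $W_n$ — ensuring that the $S$-unboundedness of $J(f)$-components really keeps $W_n$ clear of the punctures enough for the quasihyperbolic estimate to bite — and the bookkeeping of itineraries and levels when passing from the single point $z$ to a general point of $W$.
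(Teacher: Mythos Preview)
Your treatment of the inclusion $J(f)\subseteq\partial A_e(f)$ and of the deduction $\partial A(f)=J(f)$ is essentially correct and close to the paper's. (The paper produces points in the complement of $A_e(f)$ via the bounded-orbit set of Proposition~\ref{prop:BO} rather than via a periodic point; the existence of a non-exceptional periodic point is not obviously available here, but this is a minor repair.)

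The genuine gap is in your proof of the hard inclusion $\partial A_e(f)\subseteq J(f)$. You invoke Theorem~\ref{theo:J}\ref{jnoSbounded} to ensure that the quasi-Fatou component $W_n$ ``cannot engulf a full punctured neighbourhood of $y_{e_n}$'', but in the paper that statement is proved in Section~\ref{section:julia2} \emph{as a consequence} of Theorem~\ref{theo:AJ} (via Corollary~\ref{corr:J}), so the argument is circular as written. A second concern is the quasihyperbolic distortion bound $k_{W_{n+1}}(f(a),f(b))\le C(K,d)\,k_{W_n}(a,b)$ that you rely on: this is standard for quasiconformal homeomorphisms, but for branched quasiregular maps a Lipschitz estimate of this type is not generally available, and you would need to justify it carefully.

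The paper avoids both issues by taking a different route, following \cite{MR3265357}. Rather than tracking the components $W_n$, one argues in a fixed ball $B(x_0,2r)$ about a hypothetical point $x_0\in\partial A_e(f)\setminus J(f)$. Choosing nearby points $x_A\in A_e(f)$ and $x_N\notin A_e(f)$, Proposition~\ref{bflmpunctured} yields, for infinitely many $n$, connected sets $Y_{A,n},Y_{N,n}\subset B(x_0,2r)$ of diameter at least $r$ on which $|f^{\ell+n}(\cdot)|_{e_{k+n}}$ is respectively $\ge R_n$ and $\le S_{n-1}$. The modulus of the curve family $\Gamma_n$ joining them is bounded below by a dimensional constant, while the $K_O$-inequality for $f^{\ell+n}$ bounds it above by a constant times $K_I(f)^nK_O(f)^n\big(\log(\log R_n/\log S_{n-1})\big)^{1-d}$. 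A Phragm\'en--Lindel\"of growth estimate (Lemmas~\ref{lemma:grows} and~\ref{lemma:En}, available precisely because $f$ has a finite asymptotic value at each puncture) forces $\log(\log R_n/\log S_{n-1})$ to dominate an iterated exponential, contradicting the modulus lower bound. This curve-family argument needs no geometric information about quasi-Fatou components and so incurs no circularity.
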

Although there is no assumption of normality in a component of $QF(f)$, the following easy corollary of Theorem~\ref{theo:AJ} is a type of normality property, and is central to our arguments.
\begin{corollary}
\label{corr:QF}
Suppose that $f : \widehat{\R^d}\setminus S \to \widehat{\R^d} \setminus S$ is a quasiregular map of \mbox{$S$-transcendental} type. If  $e \in \mathcal{P}^{\No}$ and $U$ is a quasi-Fatou component of $f$ that meets $A_e(f)$, then $U \subset A_e(f)$.
\end{corollary}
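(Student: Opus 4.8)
The plan is to deduce this immediately from Theorem~\ref{theo:AJ}, combined with the elementary fact that a connected set which avoids the boundary of a set must lie either entirely inside that set or entirely inside its complement. There is essentially no obstacle: the whole content is carried by Theorem~\ref{theo:AJ}.

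First I would recall that, by our standing convention, all topological operations (interior, closure, boundary) are taken in the space $X := \widehat{\R^d}\setminus S$, and that by definition a quasi-Fatou component $U$ is a connected component of $QF(f) = X \setminus J(f)$. In particular $U$ is a connected subset of $X$ with $U \cap J(f) = \emptyset$. Now Theorem~\ref{theo:AJ} gives $J(f) = \partial A_e(f)$, so $U$ is a connected subset of $X$ disjoint from $\partial A_e(f)$.

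The remaining step is purely topological. Write $A := A_e(f)$. Since $U$ misses $\partial A$, every point of $U \cap A$ lies in $\operatorname{int}(A)$, so $U \cap A = U \cap \operatorname{int}(A)$ is relatively open in $U$; and since $\overline{A} = A \cup \partial A$, we also have $U \cap A = U \cap \overline{A}$, which is relatively closed in $U$. Thus $U \cap A$ is a clopen subset of the connected set $U$. By hypothesis it is non-empty, so $U \cap A = U$, that is, $U \subset A_e(f)$, as required.

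I do not anticipate any genuine difficulty here; the only point that needs a little care is to keep the ambient space fixed as $\widehat{\R^d}\setminus S$ throughout, so that ``boundary'' in Theorem~\ref{theo:AJ} and ``component'' in the definition of $QF(f)$ refer to the same topology. (The identical argument, using $\partial A(f) = J(f)$ from the same theorem, also shows $U \subset A(f)$ whenever $U$ meets $A(f)$, although this is not part of the statement.)
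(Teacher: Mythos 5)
Your proof is correct and is exactly the ``easy'' argument the paper has in mind: the authors do not spell out a separate proof of Corollary~\ref{corr:QF}, but deduce it in the manner you describe (a connected set disjoint from $J(f)=\partial A_e(f)$ that meets $A_e(f)$ lies inside it), and indeed they use precisely this reasoning implicitly at the end of the proof of Theorem~\ref{theo:AJ}. Your care in fixing the ambient space $\widehat{\R^d}\setminus S$ so that boundary and component refer to the same topology is the one point worth flagging, and you handled it correctly.
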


We will see later (Theorem~\ref{theo:A2}\ref{aequiv}) that there are uncountably many disjoint little fast escaping sets. Corollary~\ref{corr:QF} then yields the following result, as the open set $QF(f)$ has only countably many components.

\begin{corollary}
\label{corr:J}
Suppose that $f : \widehat{\R^d}\setminus S \to \widehat{\R^d} \setminus S$ is a quasiregular map of \mbox{$S$-transcendental} type. Then $A(f)\cap J(f) \ne \emptyset$. Moreover, there exists $e \in \mathcal{P}^{\No}$ such that $A_e(f) \subset J(f)$.
\end{corollary}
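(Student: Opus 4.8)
The plan is to combine Corollary~\ref{corr:QF} with the assertion (cited from the forthcoming Theorem~\ref{theo:A2}\ref{aequiv}) that there are uncountably many pairwise disjoint little fast escaping sets $A_e(f)$. First I would argue by contradiction: suppose that $A(f) \cap J(f) = \emptyset$, equivalently that $A_e(f) \subset QF(f)$ for every itinerary $e \in \mathcal{P}^{\No}$. By Theorem~\ref{theo:A1}, each $A_e(f)$ is non-empty, so for each such $e$ we may pick a quasi-Fatou component $U_e$ that meets $A_e(f)$; Corollary~\ref{corr:QF} then forces $U_e \subset A_e(f)$. Since the various $A_e(f)$ are pairwise disjoint, the map $e \mapsto U_e$ is injective, so $QF(f)$ would have uncountably many components. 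But $QF(f)$ is an open subset of the second-countable space $\widehat{\R^d}\setminus S$, hence has only countably many components — a contradiction. This proves $A(f) \cap J(f) \neq \emptyset$.

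For the second, stronger assertion, I would again use the disjointness of uncountably many little fast escaping sets, now weighed against the fact that $QF(f)$ has only countably many components. Each component $U$ of $QF(f)$ can meet at most one set $A_e(f)$: if $U$ met both $A_e(f)$ and $A_{e'}(f)$ with $e \neq e'$, then by Corollary~\ref{corr:QF} we would have $U \subset A_e(f) \cap A_{e'}(f) = \emptyset$. Hence the collection of itineraries $e$ for which $A_e(f)$ meets $QF(f)$ injects into the (countable) set of quasi-Fatou components, and is therefore countable. Since there are uncountably many itineraries giving disjoint sets $A_e(f)$, there must exist at least one $e$ with $A_e(f) \cap QF(f) = \emptyset$, that is, $A_e(f) \subset J(f)$.

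The main obstacle here is really bookkeeping rather than mathematics: one must be careful that the uncountable family of disjoint little fast escaping sets promised by Theorem~\ref{theo:A2}\ref{aequiv} is genuinely available at this point and is indexed by (a subset of) $\mathcal{P}^{\No}$, and that Corollary~\ref{corr:QF} applies in the form stated — namely that a quasi-Fatou component meeting $A_e(f)$ lies entirely inside $A_e(f)$. Given those two ingredients, the argument is a short and standard cardinality count, using only that an open subset of $\widehat{\R^d}\setminus S$ has countably many connected components.
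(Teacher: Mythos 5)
Your proof is correct and follows exactly the paper's intended approach: combine the disjointness of uncountably many little fast escaping sets (Theorem~\ref{theo:A2}\ref{aequiv}), their non-emptiness (Theorem~\ref{theo:A1}), and Corollary~\ref{corr:QF}, against the countability of the components of the open set $QF(f)$. One small streamlining the paper's one-line argument implicitly uses: your second paragraph already yields some non-empty $A_e(f)\subset J(f)$, so the first claim follows immediately and the separate contradiction argument in your first paragraph is not needed.
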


\subsection{Structure of this paper}
The structure of this paper is as follows. First, in Section~\ref{section:defs}, we give a number of important definitions and background results. Next, in Section~\ref{section:julia}, we prove the first four parts of Theorem~\ref{theo:J}, and also Theorem~\ref{theo:Jclassical}. Since the properties of the fast escaping set are required in the rest of the paper, in Section~\ref{section:A} we give the precise definition of this set and prove Theorem~\ref{theo:A1}. In Section~\ref{section:AJ} we prove Theorem~\ref{theo:AJ} and then, in Section~\ref{section:julia2}, we use it to prove the last part of Theorem~\ref{theo:J}. Finally, we prove Theorem~\ref{theo:F} in Section~\ref{section:fatou}.
%
%
%
%
%
\section{Definitions and background results}
\label{section:defs}
\subsection{Quasiregular maps}
We refer to \cite{MR1238941, MR950174} for a detailed treatment of quasiregular maps. Here we merely recall some definitions and properties used in this paper.

Suppose that $d\geq 2$, that $G \subset \R^d$ is a domain, and that $1 \leq p < \infty$. The \emph{Sobolev space} $W^1_{p,loc}(G)$ consists of those functions $f : G \to \R^d$ for which all first order weak partial derivatives exist and are locally in $L^p$. We say that $f$ is \emph{quasiregular} if $f \in W^1_{d,loc}(G)$ is continuous, and there exists $K_O \geq 1$ such that
\begin{equation}
\label{KOeq}
\left(\sup_{|h|=1} |Df(x)(h)|\right)^d \leq K_O J_f(x) \quad \text{a.e}.
\end{equation}
Here $D f(x)$ denotes the derivative, and $J_f (x)$ denotes the Jacobian determinant. If $f$ is quasiregular, then there also exists $K_I \geq 1$ such that
\begin{equation}
\label{KIeq}
K_I \left( \inf_{|h|=1} |Df(x)(h)| \right)^d \geq J_f(x) \quad \text{a.e}.
\end{equation}
The smallest constants $K_O$ and $K_I$ for which (\ref{KOeq}) and (\ref{KIeq}) hold are denoted by $K_O (f)$ and $K_I (f)$ respectively. If $\max\{K_I (f),K_O (f)\} \leq K$, for some $K \geq 1$, then we say that $f$ is \emph{$K$-quasiregular}.

If $f$ and $g$ are quasiregular maps, and $f$ is defined in the range of $g$, then $f \circ g$ is quasiregular and \cite[Theorem II.6.8]{MR1238941}
\begin{equation}
\label{Keq}
K_I(f \circ g) \leq K_I(f)K_I(g).
\end{equation}

Many properties of analytic functions extend to quasiregular maps; we use, without comment, the fact that a non-constant quasiregular map is discrete and open. We also use the following \cite[Theorem~1.2]{MR583633}, which is Rickman's analogue of Picard's great theorem.

\begin{lemma}
\label{greatpicard}
If $d \geq 2$ and $K \geq 1$, then there exists an integer $q = q(d,K)$ with the following property. If $a_1, \ldots , a_q \in \R^d$ are distinct and $r > 0$, then no K-quasiregular map $f : \{ x \in \R^d : |x| > r \} \to \R^d \setminus \{a_1, \ldots, a_q\}$ has an essential singularity at infinity.
\end{lemma}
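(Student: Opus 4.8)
The plan is to obtain this as a standard consequence of Rickman's Picard-type theorem for \emph{entire} quasiregular maps --- the statement that for each $d$ and $K$ there is an integer $q_0 = q_0(d,K)$ such that every $K$-quasiregular map $\R^d \to \R^d \setminus \{b_1, \ldots, b_{q_0}\}$ is constant --- via the classical rescaling argument that deduces the great Picard theorem from the little Picard theorem. So, aiming for a contradiction, suppose $q$ is large (to be fixed in terms of $d$ and $K$) and that $f : \{x \in \R^d : |x| > r\} \to \R^d \setminus \{a_1, \ldots, a_q\}$ is $K$-quasiregular with an essential singularity at infinity; then $f$ is non-constant, hence discrete and open.

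First I would rescale onto a fixed annulus. Set $\Omega := \{x \in \R^d : 1 < |x| < 4\}$ and, for each $n$ with $2^n > r$, let $g_n(x) := f(2^n x)$ for $x \in \Omega$. Since precomposition with a similarity preserves quasiregularity and its constants (use \eqref{Keq}), each $g_n$ is $K$-quasiregular on $\Omega$, and each $g_n$ omits the values $a_1, \ldots, a_q$. By Miniowitz's quasiregular analogue of Montel's theorem --- itself a consequence of Rickman's entire Picard theorem --- once $q \ge q_1(d,K)$ for the relevant constant $q_1(d,K)$, the family $\{g_n\}$ is normal on $\Omega$ with respect to local uniform convergence in the spherical metric, and every locally uniform limit of a subsequence is either a quasiregular map $\Omega \to \R^d$ or the constant $\infty$. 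Passing to a subsequence, let $g_{n_j} \to \varphi$ locally uniformly on $\Omega$.

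It then remains to contradict the essential singularity, splitting on the two possibilities for $\varphi$. If $\varphi$ takes values in $\R^d$, then $g_{n_j}$ is uniformly bounded on the sphere $\{x : |x| = 2\}$, so $f$ is bounded, by some constant $C$, on every sphere $\{x : |x| = 2^{n_j}\}$; as $f$ is open, $|f|$ satisfies the maximum principle, so $|f| \le C$ on each annulus $\{x : 2^{n_j} \le |x| \le 2^{n_{j+1}}\}$, hence on a neighbourhood of infinity. Applying the inversion $x \mapsto x/|x|^2$ to move infinity to the origin, and invoking the removability of bounded isolated singularities of quasiregular maps, $f$ extends continuously to infinity, so $\lim_{x \to \infty} f(x)$ exists --- contradicting the essential singularity. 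If instead $\varphi \equiv \infty$, then, using that $f$ omits $a_1$, the map $h := \iota \circ (f - a_1)$, where $\iota(y) := y/|y|^2$, is $K$-quasiregular on $\{x : |x| > r\}$ with $|h(x)| = 1/|f(x) - a_1|$; from $g_{n_j} \to \infty$ we get $h \to 0$ uniformly on $\{x : |x| = 2^{n_j}\}$, and the maximum principle for $|h|$ on the annuli forces $h \to 0$, i.e.\ $f(x) \to \infty$, as $x \to \infty$ --- again contradicting the essential singularity.

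The main obstacle is the normal-families input: that a family of $K$-quasiregular maps of a domain in $\R^d$ omitting $q_1(d,K)$ fixed values is normal. This lies as deep as Rickman's entire Picard theorem and rests on his value-distribution estimates (the quasiregular defect relation); by comparison, the rescaling, the maximum-principle step, and the removability of bounded singularities are routine. Since the full statement is precisely \cite[Theorem~1.2]{MR583633}, in the paper we simply cite it, and the above merely records the route one would follow.
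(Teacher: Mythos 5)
Agreed: the paper offers no proof of this lemma, stating only that it is ``\cite[Theorem~1.2]{MR583633}, which is Rickman's analogue of Picard's great theorem,'' so the argument is a bare citation, exactly as you conclude at the end of your proposal. Your rescaling-and-normality sketch is not in the paper but is a sensible outline of the standard derivation from the entire Picard theorem, modulo small fixable slips (for example, $\iota\circ(f-a_1)$ is sense-reversing and so not literally $K$-quasiregular, though it is still open, which is all the maximum-principle step actually uses).
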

The number $q(d,K)$ is called \emph{Rickman's constant}.
\subsection{The capacity of a condenser}
An important tool in the study of quasiregular maps is the capacity of a condenser, and we recall this idea very briefly. If $A \subset \R^d$ is open, and $C \subset A$ is non-empty and compact, then the pair $(A,C)$ is called a \emph{condenser}. Its \emph{capacity}, denoted by cap$(A,C)$, is defined by
$$
\operatorname{cap}(A,C) := \inf_u \int_A |\nabla u|^d dm.
$$
Here, the infimum is taken over all non-negative functions $u \in C^\infty_0(A)$ that satisfy $u(x) \geq 1$, for $x \in C$.

If cap$(A,C) = 0$ for a bounded open set $A$ containing $C$, then cap$(A',C) = 0$ for every bounded open set $A'$ containing $C$; see \cite[Lemma III.2.2]{MR1238941}. In this case we say that $C$ \emph{has zero capacity}, and write cap $C = 0$; otherwise we say that $C$ \emph{has positive capacity}, and write cap~$C > 0$. For an unbounded closed set $C\subset\R^d$, we say that $C$ has zero capacity if every compact subset of $C$ has zero capacity. Roughly speaking, cap $C = 0$ means that $C$ is a ``small'' set. In particular, it is well-known that any finite set has zero capacity.
\subsection{The modulus of a curve family}\label{sect:modulus}
The proof of Theorem~\ref{theo:AJ} closely follows the proof of \cite[Theorem 1.2]{MR3265357}. This requires us to introduce the concept of the modulus of a curve family, although we are able to eschew all detail and refer to \cite{MR1238941,MR950174} for more information. If $\Gamma$ is a family of paths in $\R^d$, then a non-negative Borel function $\rho : \R^d \to \R \cup \{\infty\}$ is called \emph{admissible} if $\int_\gamma \rho \ ds \geq 1$, for all locally rectifiable paths $\gamma \in \Gamma$. We let $\mathcal{F}(\Gamma)$ be the family of all admissible Borel functions, and let the \emph{modulus} of $\Gamma$ be defined by
$$
M(\Gamma) := \inf_{\rho \in \mathcal{F}(\Gamma)} \int_{\R^d}\rho^d \ dm.
$$

Finally, if $G \subset \R^d$ is a domain, and $E, F$ are subsets of $\overline{G}$, then we denote by $\Delta(E,F;G)$ the family of all paths which have one endpoint in $E$, one endpoint in $F$, and which otherwise are in $G$.
\subsection{Topological prerequisites}
For simplicity we define a \emph{continuum} as a non-empty subset of $\widehat{\R^d}$ which is compact and connected in $\widehat{\R^d}$. We use the following version of \cite[Theorem 2 p.172]{MR0259835}, in which we take closures in $\widehat{\R^d}$; this is known as a boundary bumping theorem.
\begin{proposition}
\label{kurat}
Suppose that $X$ is a proper subset of a continuum $K$, and that $X'$ is a connected component of $X$. Then $\overline{X'} \cap \overline{K \setminus X} \ne \emptyset.$
\end{proposition}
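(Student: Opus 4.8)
The plan is to argue by contradiction, and to reduce matters to showing that a suitable set is clopen in $K$. We may assume $X\neq\emptyset$ (otherwise the statement is vacuous), so that $X'\neq\emptyset$, and we suppose for contradiction that $\overline{X'}\cap\overline{K\setminus X}=\emptyset$. Since $X$ is a proper subset of $K$, the set $K\setminus X$ is non-empty, and hence so is $\overline{K\setminus X}$. I would then introduce the open subset $W:=K\setminus\overline{K\setminus X}$ of $K$. As $\overline{X'}$ is a subset of $K$ disjoint from $\overline{K\setminus X}$, we have $\overline{X'}\subseteq W$; in particular $W\neq\emptyset$. Also $K\setminus X\subseteq\overline{K\setminus X}$ gives $W\subseteq X$, while $W\neq K$ because $\overline{K\setminus X}\neq\emptyset$.

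The next step is to observe that $X'$ is in fact closed and is a component of $W$. Indeed, any connected subset of $W$ containing $X'$ is a connected subset of $X$ containing the component $X'$ of $X$, hence equals $X'$; so $X'$ is a component of $W$. Since $\overline{X'}$ is connected, is contained in $W$, and contains $X'$, maximality forces $\overline{X'}=X'$, so $X'$ is compact. Now $E:=K\setminus W$ is a non-empty compact subset of $K$ disjoint from $X'$, so in the metric of $\widehat{\R^d}$ we have $\delta:=\operatorname{dist}(X',E)>0$; set $N:=\{x\in K:\operatorname{dist}(x,X')<\delta/2\}$, an open subset of $K$ with $X'\subseteq N$ and $\overline{N}\subseteq W$ (using $\operatorname{dist}(E,X')=\delta>\delta/2$). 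Note $\overline{N}$ is a closed subset of $\widehat{\R^d}$, hence compact, and $X'$ is a component of $\overline{N}$ by the same maximality argument as before.

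The key topological input is the standard fact that the components of a compact Hausdorff space coincide with its quasi-components. Applying this to $\overline{N}$, so that $X'=\bigcap\{D':D'$ clopen in $\overline{N},\ X'\subseteq D'\}$, and intersecting with the compact set $\overline{N}\setminus N$ (which is disjoint from $X'$), a finite-intersection argument yields a set $D$, clopen in $\overline{N}$, with $X'\subseteq D\subseteq N$. One then checks that $D$ is clopen in $K$: it is closed in $K$ because it is closed in $\overline{N}$, which is closed in $K$; and it is open in $K$ because, writing $D=\overline{N}\cap G$ with $G$ open in $K$, for each $x\in D\subseteq N$ the set $N\cap G$ is an open neighbourhood of $x$ in $K$ satisfying $N\cap G=N\cap D\subseteq D$. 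Since $D$ is non-empty and $K$ is connected we get $D=K$; but $D\subseteq N\subseteq W\neq K$, a contradiction.

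I expect the main subtlety to be precisely that $X$, and hence $X'$, need not be closed, so one cannot simply quote the boundary bumping theorem for closed subsets; the device of replacing $X$ by $W$ (and then $W$ by $\overline{N}$) is exactly what brings us into a setting where the quasi-component machinery applies. A minor secondary point, worth flagging carefully, is that $K$ is not assumed locally connected, which is why the clopen-set argument is routed through the compact space $\overline{N}$ rather than by trying to show directly that $X'$ is open in $K$. Alternatively, the statement can simply be quoted from \cite[Theorem 2 p.172]{MR0259835}.
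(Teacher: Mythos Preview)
The paper does not prove this proposition; it simply states it as a version of Kuratowski's boundary bumping theorem and cites \cite[Theorem~2 p.172]{MR0259835}. Your self-contained argument is correct, and indeed your closing remark --- that one may alternatively quote the result from Kuratowski --- is exactly the route the paper takes.
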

We also need the following version of \cite[Lemma 3.2]{MR2448586}. That result, roughly speaking, is the case of a single puncture at infinity.
\begin{proposition}
\label{bflmpunctured}
Suppose that $f : \widehat{\R^d}\setminus S \to \widehat{\R^d} \setminus S$ is a quasiregular map of \mbox{$S$-transcendental} type. Suppose that $E \subset \widehat{\R^d}$ is a continuum that meets $S$. Then all components of $f^{-1}(E)$ are $S$-unbounded.
\end{proposition}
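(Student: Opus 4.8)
The plan is to reduce Proposition~\ref{bflmpunctured} to the single-puncture case \cite[Lemma 3.2]{MR2448586} by a change of coordinates, and to deal with the several punctures one at a time. Suppose $E \subset \widehat{\R^d}$ is a continuum meeting $S$, and let $X$ be a component of $f^{-1}(E)$; we must show that $\overline{X}$ (closure in $\widehat{\R^d}$) meets $S$. Since $E$ is a continuum, $f^{-1}(E)$ is closed in $\widehat{\R^d}\setminus S$, and its components are thus closed in $\widehat{\R^d}\setminus S$ as well; so if $\overline{X}$ misses $S$ then $X=\overline{X}$ is actually a compact subset of $\widehat{\R^d}\setminus S$, on which $f$ is a (non-constant, unless $X$ is a point) quasiregular map with $f(X)\subset E$.

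First I would fix an essential singularity $y_j \in S \cap E$ and conjugate by a M\"obius map $\psi$ of $\widehat{\R^d}$ sending $y_j$ to $\infty$; replacing $f$ by $\psi \circ f$ we may assume $\infty \in E$. The idea is now to apply the known single-puncture statement ``near $\infty$''. Concretely, pick $R$ large enough that the closed ball complement $\{|x|\ge R\}$ meets $S$ only in a neighbourhood of... — more carefully, choose $R$ so large that $\{x : |x|>R\}\cap S=\{y_0\}$ (here $y_0=\infty$) using $\rho_S$ as in \eqref{rhoSdef}, and consider $f$ restricted to the part of its domain lying over $\{|x|>R\}$. On this region $f$ omits the finite set $\{y_1,\dots,y_\nu\}$ together with any point of $E^c$; the hypotheses of \cite[Lemma 3.2]{MR2448586} (a single puncture at $\infty$) then apply to $f$ composed with a translation/inversion carrying the relevant component into the framework of that lemma, giving that the component of the preimage is unbounded in the $|x|$-metric, i.e. $S$-unbounded towards $y_j$. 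The one subtlety is that \cite[Lemma 3.2]{MR2448586} is stated for maps on all of $\R^d$ (or a neighbourhood of $\infty$) rather than for a map with several punctures; so I would phrase the argument as: the preimage component $X$, if $S$-bounded, would be a continuum in $\widehat{\R^d}\setminus S$ avoiding $\infty$, and then $f$ maps a neighbourhood of $X$ into $\widehat{\R^d}$ omitting the continuum's complement; applying the boundary-bumping proposition (Proposition~\ref{kurat}) to $X$ inside a suitable larger continuum of the preimage forces $\overline{X}$ to meet either another component or a puncture.

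Here is the cleaner route I would actually write. Assume for contradiction that $X$ is $S$-bounded, so $K:=\overline{X}$ is a continuum contained in $\widehat{\R^d}\setminus S$, with $f(K)\subset E$. Since $E$ meets $S$ but $f(K)\subset \widehat{\R^d}\setminus S$, we have $f(K)\subset E\setminus S \subsetneq E$, so $f(K)$ is a proper subcontinuum of $E$. Pick a point $y_j\in E\cap S$ and a sequence $w_n\in E\setminus S$ with $w_n\to y_j$; by openness of $f$ near a point $x_0\in X$ with $f(x_0)\in E\setminus S$ we can find preimages, but the key is to run a standard exhaustion: because $f$ has an essential singularity at $y_j$, the restriction of $f$ to any punctured neighbourhood $N$ of $y_j$ (intersected with the domain) is surjective onto $\widehat{\R^d}$ minus a set of at most $q(d,K)$ points (Lemma~\ref{greatpicard}), so $f^{-1}(E)\cap N\ne\emptyset$ for every such $N$. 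Thus $f^{-1}(E)$ accumulates at $y_j$. Now apply Proposition~\ref{kurat} with the continuum $E$ in the role of $K$, $X=E\setminus\{y_j\}$, and $X'$ the component of $E\setminus\{y_j\}$; pulling back via the quasiregular map $f$ and using that $f^{-1}$ of a connected set in the complement of the branch values behaves well, one concludes that some component of $f^{-1}(E)$ has closure meeting $y_j$. Finally I would upgrade ``some component'' to ``every component'': if $X$ is a component of $f^{-1}(E)$ that is $S$-bounded, apply the boundary bumping theorem (Proposition~\ref{kurat}) to $X$ as a proper subset of a connected component $K^*$ of $f^{-1}(E')$ where $E'\supset E$ is a slightly larger continuum in $\widehat{\R^d}\setminus(S\setminus\{y_j\})$ still meeting $y_j$'s neighbourhood appropriately; then $\overline{X}\cap\overline{K^*\setminus X}\ne\emptyset$, and iterating this bumping along a nested sequence of such continua shrinking to...

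\emph{The main obstacle} is exactly this last upgrade: showing that \emph{every} component of $f^{-1}(E)$ is $S$-unbounded, not just that the preimage set as a whole accumulates at $S$. The clean way to handle it is to mimic the proof of \cite[Lemma 3.2]{MR2448586} directly rather than invoke it as a black box: given a component $X$ of $f^{-1}(E)$ that is $S$-bounded, take an $S$-bounded open neighbourhood $V$ of $\overline{X}$ with $\overline{V}\subset\widehat{\R^d}\setminus S$ and $\partial V\cap f^{-1}(E)=\emptyset$, so that $f$ is defined and quasiregular on a neighbourhood of $\overline{V}$; then $f(\partial V)$ is a compact subset of $\widehat{\R^d}$ disjoint from $f(\overline{X})$... no — one needs $f(\partial V)\cap E=\emptyset$, which holds by the choice of $V$. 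Then $f(V)$ is an open set whose boundary avoids $E$, yet $f(V)\supset f(X)$ which is a nonempty subset of the continuum $E$; since $E$ meets $S$ and $\overline{f(V)}$ is compact in $\widehat{\R^d}$, if $E\not\subset f(V)$ then $E$ meets $\partial f(V)$, contradiction, so $E\subset f(V)$, in particular $S\cap E\subset f(V)$. But $f(V)\subset \widehat{\R^d}\setminus S$ since $V$ is in the domain of $f$, which is the desired contradiction. This argument is short provided one can choose such a $V$, and the existence of $V$ is where the $S$-boundedness of $X$ and the fact that components of $f^{-1}(E)$ are closed and disjoint get used (separate $X$ from the other components by an open set, shrink to avoid $S$). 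I expect the write-up to be about a page, with the delicate point being the construction of the separating neighbourhood $V$ with $\partial V\cap f^{-1}(E)=\emptyset$ and $\overline V\subset \widehat{\R^d}\setminus S$.
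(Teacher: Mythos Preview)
Your final argument (the ``clean way'') is correct and is essentially the same approach as the paper's. The paper organises it as a dichotomy on the compact set $F=f^{-1}(E)\cup S$: either every component of $F$ meets $S$, in which case Proposition~\ref{kurat} (boundary bumping) applied to a component $X'$ of $f^{-1}(E)$ inside its component $K$ of $F$ forces $\overline{X'}\cap S\ne\emptyset$; or some component of $F$ avoids $S$, in which case one separates $F$ into disjoint closed pieces $H_1$, $H_2$ with $S\subset H_2$ and derives a contradiction exactly as you do (this is the part the paper omits, citing \cite{MR2448586}). Your argument that $E\subset f(V)$ via $\partial f(V)\subset f(\partial V)$ and connectedness of $E$, followed by the contradiction $E\cap S\subset f(V)\subset\widehat{\R^d}\setminus S$, is precisely the content of that omitted step.

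The ``delicate point'' you flag --- constructing $V$ with $\overline V\subset\widehat{\R^d}\setminus S$ and $\partial V\cap f^{-1}(E)=\emptyset$ --- is not resolved by your remark that components of $f^{-1}(E)$ are ``closed and disjoint'': disjoint components of a closed set need not be separable by open sets. What is actually needed is that the component of the \emph{compact} set $f^{-1}(E)\cup S$ containing $X$ avoids $S$; this follows because if that component met $S$ then boundary bumping (Proposition~\ref{kurat}) would already make $X$ $S$-unbounded. Once that is established, the standard fact that components equal quasicomponents in compact Hausdorff spaces yields your $V$. This is exactly the paper's Case~1 reasoning, and also exactly how Proposition~\ref{prop:comps} (which produces your $V$ verbatim) is proved. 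So your instinct was right, but the justification should go through $f^{-1}(E)\cup S$ and boundary bumping rather than a bare appeal to disjointness.

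The first two-thirds of your proposal (M\"obius reduction, accumulation via Lemma~\ref{greatpicard}, enlarging $E$ to $E'$) are unnecessary detours; drop them and write up only the final paragraph, supplemented by the boundary-bumping justification for the existence of $V$.
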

\begin{proof}
Let $F=f^{-1}(E)\cup S$, and note that this is a compact subset of $\widehat{\R^d}$. If every component of $F$ meets $S$, then the result follows by applying Proposition~\ref{kurat} with $X'$ any component of $f^{-1}(E)$, and $X=K\setminus S$, where $K$ is the component of $F$ containing $X'$.

Otherwise, some component of $F$ does not meet $S$ and hence $F$ can be partitioned into two non-empty disjoint relatively closed sets $H_1$ and $H_2$ such that $S\subset H_2$. A contradiction can then be deduced in a very similar way to \cite[Lemma 3.2]{MR2448586}; the details are omitted.
\end{proof}

We also require the following, which is a version of a result established, for example, in the proof of \cite[Theorem 1.4]{SixsmithNicks1}.

\begin{proposition}
\label{prop:comps}
Suppose that $X \subset \widehat{\R^d}\setminus S$ is closed, and that $K$ is a component of $X$ which is $S$-bounded. Then there is an $S$-bounded domain $V$ such that $K \subset V$ and $\partial V \cap X = \emptyset$.
\end{proposition}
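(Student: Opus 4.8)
The plan is to obtain $V$ by a topological separation argument carried out inside a compact neighbourhood of $K$. First I would note that, since $K$ is $S$-bounded and is a connected component of $X$, which is closed in $\widehat{\R^d}\setminus S$, the closure of $K$ in $\widehat{\R^d}$ equals $K$ and is disjoint from $S$; hence $K$ is compact. As $S$ is closed, I may therefore choose an open set $W\subset\widehat{\R^d}$ with $K\subset W$ and $\overline W\cap S=\emptyset$. Put $Y:=X\cap\overline W$. Since $\overline W\subset\widehat{\R^d}\setminus S$, the set $Y$ is closed in $\widehat{\R^d}$ and hence compact, and $K$ is a connected component of $Y$: indeed any connected subset of $Y$ containing $K$ is a connected subset of $X$ containing the component $K$, so it coincides with $K$.

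Next I would separate $K$ from $\partial W$ within $Y$. The set $X\cap\partial W$ is closed in $Y$ and disjoint from $K$, because $K$ lies in the open set $W$. Since $Y$ is compact and Hausdorff, its connected components agree with its quasi-components, so $K$ equals the intersection of the relatively clopen subsets of $Y$ that contain it; as this family is closed under finite intersections and $X\cap\partial W$ is compact, a standard compactness argument yields a single set $A$, relatively clopen in $Y$, with $K\subset A$ and $A\cap\partial W=\emptyset$. Since $A\subset\overline W=W\cup\partial W$ and $A$ avoids $\partial W$, we get $A\subset W$; in particular $A\cap S=\emptyset$, and $A$ is compact.

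I would then check that $A$ is isolated from the rest of $X$, in the sense that the closure of $X\setminus A$ in $\widehat{\R^d}$ does not meet $A$. If $p\in A$, then, using that $A$ is relatively open in $Y$ and that $A\subset W$, there is a neighbourhood $N$ of $p$ in $\widehat{\R^d}$ with $N\subset W$ and $N\cap Y\subset A$; hence $N\cap X=N\cap X\cap\overline W=N\cap Y\subset A$, so $N$ misses $X\setminus A$. Consequently $A$ is disjoint from the closed set $Z:=S\cup\overline{(X\setminus A)}$, the last closure being taken in $\widehat{\R^d}$. As $\widehat{\R^d}$ is a compact metric space, I can engulf the compact set $A$ in an open set $V_0$ with $A\subset V_0$ and $\overline{V_0}\cap Z=\emptyset$. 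Then $\overline{V_0}\cap S=\emptyset$, so $V_0$ is $S$-bounded, and $\partial V_0\cap X=\emptyset$, since $\partial V_0$ avoids $X\setminus A$ (being disjoint from $Z$) and avoids $A\subset V_0$. Finally I would take $V$ to be the connected component of $V_0$ containing $K$: by local connectedness of $\widehat{\R^d}$ this $V$ is a domain, it is $S$-bounded because $\overline V\subset\overline{V_0}$, it contains $K$, and $\partial V\cap X=\emptyset$, because $\partial V\subset\overline{V_0}$ while no point of $\partial V$ lies in $V_0$ (a point of $V$ is not a boundary point of $V$, and a point of another component of the open set $V_0$ is interior to that component, hence not in $\overline V$), so $\partial V\subset\partial V_0$, which is disjoint from $X$.

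I expect the main obstacle to be the bookkeeping forced by $X$ being closed only in $\widehat{\R^d}\setminus S$ rather than in $\widehat{\R^d}$: one must descend to the compact neighbourhood $Y$, verify that $K$ survives there as a component, and keep careful track of whether each closure is taken in $\widehat{\R^d}$ or in $\widehat{\R^d}\setminus S$. The one genuinely non-elementary ingredient is the coincidence of components and quasi-components in the compact space $Y$, which is exactly what produces the relatively clopen separator $A$; everything after that is routine point-set topology.
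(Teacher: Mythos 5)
Your proof is correct, and it takes a genuinely different (though related) route from the paper's. The paper works with the compact set $X\cup S$: it lets $T$ be the component of $X\cup S$ containing $K$, uses the boundary bumping Proposition~\ref{kurat} to show $T\cap S=\emptyset$, and then invokes a theorem of Newman (components of a compact Hausdorff space can be separated from disjoint closed sets by a partition into two relatively closed pieces) to split $X\cup S$ into $H_1\supset T$ and $H_2\supset S$; since this partition covers all of $X\cup S$, the $\varepsilon$-neighbourhood of $H_1$ immediately has boundary disjoint from $X$. You instead compactify by restricting to $Y=X\cap\overline W$ for a carefully chosen neighbourhood $W$ of $K$, after first observing that $K$ itself is compact because it is $S$-bounded and closed in $X$. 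The separation ingredient you use --- that components agree with quasi-components in a compact Hausdorff space --- is essentially the same theorem as Newman's, just phrased differently. What your route buys is that you avoid Proposition~\ref{kurat} entirely; what it costs is the extra step of checking that the clopen piece $A$ is isolated from $X\setminus A$, which the paper gets for free because its partition $H_1\cup H_2$ exhausts $X\cup S$. Both arguments then finish by fattening up the separated piece and passing to the connected component containing $K$.
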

\begin{proof}
Note that $X\cup S$ is a compact subset of $\widehat{\R^d}$. Let $T$ be the component of $X\cup S$ that contains $K$. Suppose that $T$ meets $S$, in which case $T\setminus S$ is a proper subset of $T$. Since $K$ is a connected component of $T\setminus S$, Proposition~\ref{kurat} implies that $K$ is $S$-unbounded. This is a contradiction.

Hence $T$ is disjoint from $S$. Since $T$ is a component of the compact set $X \cup S$, it follows from \cite[Theorem~5.6]{MR0132534} that $X \cup S$ can be partitioned into two disjoint relatively closed sets $H_1$ and $H_2$ such that $T\subset H_1$ and $S\subset H_2$. The sets $H_1$ and $H_2$ are closed (and so compact) in~$\widehat{\R^d}$, and hence, using the spherical metric  on~$\widehat{\R^d}$, there is a positive distance $2\varepsilon$ between them. Let $V'$ be the $\varepsilon$-neighbourhood of $H_1$ and note that this is $S$-bounded. We take $V$ to be the component of $V'$ that contains $T$.
\end{proof}

The final result in this subsection concerns continuous functions, and is almost identical to \cite[Lemma 2.6]{danslow}; see also \cite[Lemma 1]{MR2792984}. The proof is omitted.
\begin{lemma}
\label{lemm:exists}
Suppose that $f : \widehat{\R^d}\setminus S \to \widehat{\R^d}\setminus S$ is a continuous function, and that $(E_n)_{n\in\No}$ is a sequence
of non-empty $S$-bounded subsets of $\widehat{\R^d}\setminus S$ such that
$$
f(E_n) \supset E_{n+1}, \qfor n\in\No.
$$
Then there exists $\xi\in \widehat{\R^d}\setminus S$ such that $f^n(\xi) \in \overline{E_n}$, for $n\in\No$.
\end{lemma}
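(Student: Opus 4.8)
The plan is to produce $\xi$ as a point lying in the intersection of a decreasing chain of non-empty compact sets. For $N \in \No$ I would set
$$ G_N := \bigcap_{n=0}^{N} f^{-n}\bigl(\overline{E_n}\bigr), $$
with $f^0$ the identity. Since each $E_n$ is $S$-bounded, its closure $\overline{E_n}$ coincides with the closure taken in $\widehat{\R^d}$ (as the latter is already disjoint from $S$), and is therefore a compact subset of $\widehat{\R^d}\setminus S$; in particular $G_0 = \overline{E_0}$ is compact. As $f$, and hence each iterate $f^n$, is continuous on $\widehat{\R^d}\setminus S$, every set $f^{-n}(\overline{E_n})$ is closed in $\widehat{\R^d}\setminus S$, so $G_N$ is a closed subset of the compact set $G_0$ and is itself compact. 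Clearly the sequence is nested: $G_0 \supset G_1 \supset \cdots$.

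The main step will be to check that each $G_N$ is non-empty. Fixing $N$, I would choose any $x_N \in E_N$, which is possible since $E_N$ is non-empty, and then use the hypothesis $f(E_n)\supset E_{n+1}$ repeatedly to pick points $x_{N-1}\in E_{N-1}$, $x_{N-2}\in E_{N-2},\dots, x_0 \in E_0$ with $f(x_n)=x_{n+1}$ for $0 \le n \le N-1$. Then $f^n(x_0) = x_n \in E_n \subset \overline{E_n}$ for $0 \le n \le N$, which is exactly the statement that $x_0 \in G_N$.

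Finally, $(G_N)_{N\in\No}$ being a decreasing sequence of non-empty compact sets, Cantor's intersection theorem yields a point $\xi \in \bigcap_{N\in\No} G_N$. Since $\xi \in G_0 = \overline{E_0} \subset \widehat{\R^d}\setminus S$ and $f$ maps $\widehat{\R^d}\setminus S$ into itself, all iterates $f^n(\xi)$ are well-defined and remain in $\widehat{\R^d}\setminus S$; and the condition $\xi \in f^{-n}(\overline{E_n})$ for every $n$ is precisely $f^n(\xi) \in \overline{E_n}$, so $\xi$ has the required property.

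As for obstacles, there is essentially none of substance: only continuity of $f$ is used, and the whole argument is a compactness exercise. The single point needing care is that $\widehat{\R^d}\setminus S$ is not itself compact, so one must exploit the $S$-boundedness hypothesis to confine the entire construction inside the compact set $\overline{E_0}$; once that observation is in place, the finite intersection property does the rest.
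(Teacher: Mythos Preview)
Your argument is correct: the nested sequence $G_N = \bigcap_{n=0}^N f^{-n}(\overline{E_n})$ of non-empty compact sets, with non-emptiness established by pulling back a point of $E_N$ step by step through the covering relations, is exactly the standard way to prove this lemma. The paper itself omits the proof, referring instead to \cite[Lemma~2.6]{danslow} and \cite[Lemma~1]{MR2792984}, where the same compactness argument is used; so your proof matches what the paper has in mind.
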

%
%
%
\subsection{Properties of the generalised maximum modulus functions}
It is useful to define some M\"{o}bius maps that will be referred to in several subsequent proofs. First let $\tau$ be the reflection defined by $\tau(\infty) = \infty$ and
$$
\tau(x) = \tau(x_1, x_2, \ldots, x_d) = (-x_1, x_2, \ldots, x_d), \qfor x \in \R^d.
$$
We then let $\phi_0 : \widehat{\R^d} \to \widehat{\R^d}$ be the identity map, and, for $j \in \mathcal{P}\setminus\{0\}$, we let $\phi_j: \widehat{\R^d} \to \widehat{\R^d}$ be the M\"{o}bius map defined by
$$
\phi_j(x) := \tau\left(\frac{x - y_j}{|x - y_j|^2}\right).
$$
Note that the function $\tau$ is introduced only to ensure that each $\phi_j$ is orientation-preserving.
 The usefulness of the maps $\phi_j$ is due to the fact that
\[ |x|_j = |\phi_j(x)|, \qfor x\in\widehat{\R^d}, \ j \in \mathcal{P}. \] 


In the following lemma we gather various properties of the generalised maximum modulus functions. We later use the first part of this result without further comment.
\begin{lemma}
\label{lemma:gen}
Suppose that $f : \widehat{\R^d}\setminus S \to \widehat{\R^d} \setminus S$ is a quasiregular map of \mbox{$S$-transcendental} type. Suppose also that $j, k \in \mathcal{P}$. Then:
\begin{enumerate}[label=(\alph*)]
\item\label{Mnormal} $M_{j,k}(r, f)$ is increasing, for sufficiently large values of $r > \rho_S$.
\item\label{Mmult} If $A > 1$, then
$$
\lim_{r\rightarrow\infty} \frac{M_{j,k}(Ar,f)}{M_{j,k}(r,f)} = \infty.
$$
\item\label{Mbig} We have
$$
\lim_{r\rightarrow\infty} \frac{\log M_{j,k}(r,f)}{\log r} = \infty.
$$
\end{enumerate}
\end{lemma}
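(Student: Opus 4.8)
The plan is to reduce all three statements to standard facts about the maximum modulus of a quasiregular map near an isolated essential singularity at infinity, via a Möbius change of variable that turns $M_{j,k}(\cdot,f)$ into an ordinary maximum modulus function. First I would fix $j,k\in\mathcal{P}$ and set $g:=\phi_k\circ f\circ\phi_j^{-1}$. As a composition of quasiregular maps, $g$ is quasiregular (see \cite[Theorem II.6.8]{MR1238941}), and its domain contains $\{x\in\R^d:|x|>\rho_S\}$, because $\phi_j^{-1}$ maps this set into $\{x:|x|_j>\rho_S\}\setminus\{y_j\}$, which by \eqref{rhoSdef} lies in $\widehat{\R^d}\setminus S$. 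Since $\phi_j,\phi_k$ are homeomorphisms of $\widehat{\R^d}$ with $\phi_j^{-1}(\infty)=y_j$ and $\phi_k^{-1}(\infty)=y_k$, and since $f$ omits the value $y_k\in S$, the map $g$ takes values in $\R^d$ and has an isolated essential singularity at $\infty$. Using $\{x:|x|_j=r\}=\phi_j^{-1}(\{y:|y|=r\})$ and $|w|_k=|\phi_k(w)|$, one checks that
$$
M_{j,k}(r,f)=\max_{|y|=r}|g(y)|=:M(r,g),\qfor r>\rho_S,
$$
so it suffices to prove the corresponding statements for $M(r,g)$.

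For \ref{Mnormal} I would argue as follows. Since an isolated singularity of a bounded quasiregular map is removable (see \cite{MR1238941}), $g$ is unbounded near $\infty$, whence $\limsup_{r\to\infty}M(r,g)=\infty$. Fix $R_0>\rho_S$ and put $B:=\max_{|x|=R_0}|g|<\infty$. As $g$ is non-constant it is open, so the maximum of $|g|$ over the compact annulus $\{R_0\le|x|\le r\}$ is attained on its boundary; hence $\mathcal{M}(r):=\max_{R_0\le|x|\le r}|g|=\max\{B,M(r,g)\}$ is a non-decreasing function of $r$ tending to infinity. Therefore $\mathcal{M}(r)>B$, and so $M(r,g)=\mathcal{M}(r)$, for all $r$ beyond some $R_1\ge R_0$; in particular $M(\cdot,g)$ is non-decreasing on $[R_1,\infty)$. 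It is strictly increasing there: if $M(r_1,g)=M(r_2,g)$ with $R_1\le r_1<r_2$, then $|g|$ attains the value $M(r_2,g)>B$ at some point of the open annulus $\{R_0<|x|<r_2\}$, contradicting that $g$ is open.

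For \ref{Mmult} and \ref{Mbig}, the reduction above turns these into the assertions that a quasiregular map with an essential singularity at $\infty$ satisfies $M(Ar,g)/M(r,g)\to\infty$ for every $A>1$ and $\log M(r,g)/\log r\to\infty$. These are precisely the growth estimates established for quasiregular self-maps of $\R^d$ of transcendental type in \cite{MR3215194, MR3265357}; the proofs there rest on a quasiregular analogue of Hadamard's three-circles theorem for $\log M(r,g)$ as a function of $\log r$, together with the fact that $g$ cannot have polynomial growth at $\infty$ (a quasiregular map of polynomial growth at an isolated singularity extends quasiregularly across it, contradicting the essential singularity). Those arguments involve $g$ only on a neighbourhood of $\infty$, so they carry over to the present setting.

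The step I expect to require the most care is exactly this last point: verifying that the three-circles machinery and the relevant removability and extension results remain valid for a map defined only on a punctured neighbourhood of $\infty$, rather than on all of $\R^d$ — that is, checking that these results are genuinely local in nature. The analytic heart of the matter, as in the entire and the $\R^d$ quasiregular cases, is the three-circles-type control of $M(r,g)$; the rest is the routine bookkeeping needed to justify the Möbius change of variable, in particular to confirm that $g$ really does omit the value $\infty$ and inherits the essential singularity at $\infty$.
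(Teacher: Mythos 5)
Your proposal is correct and follows essentially the same route as the paper: conjugate by the M\"obius maps $\phi_j,\phi_k$ to reduce $M_{j,k}(r,f)$ to the ordinary maximum modulus $M(r,g)$ of a quasiregular map $g$ with an isolated essential singularity at infinity, deduce part \ref{Mnormal} from openness, and obtain parts \ref{Mmult} and \ref{Mbig} from three-circles-type growth estimates for such maps. The only cosmetic difference is the source cited for the growth estimate in \ref{Mmult} (the paper points to \cite[Lemma 3.3]{MR2248829} while you invoke \cite{MR3215194,MR3265357}), and you supply a bit more detail for \ref{Mnormal} than the paper's one-line remark.
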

\begin{proof}
Suppose that $f : \widehat{\R^d}\setminus S \to \widehat{\R^d} \setminus S$ is a quasiregular map of \mbox{$S$-transcendental} type. Part \ref{Mnormal} is a consequence of the fact that $f$ is an open map. For part \ref{Mmult}, we consider the quasiregular map $g = \phi_k \circ f \circ \phi_j^{-1}$. Note that $g$ is defined on a punctured neighbourhood of infinity, and has an essential singularity at infinity. We need to show that
\begin{equation}
\label{anMequation}
\lim_{r\rightarrow\infty} \frac{M(Ar,g)}{M(r,g)} = \infty.
\end{equation}

The proof of this fact is similar to the proof of \cite[Lemma 3.3]{MR2248829}, and is omitted. Part \ref{Mbig} is a simple consequence of part \ref{Mmult}.
\end{proof}
\subsection{Asymptotic values}
Suppose that $f : \widehat{\R^d}\setminus S \to \widehat{\R^d} \setminus S$ is a quasiregular map of \mbox{$S$-transcendental} type. If $a \in \widehat{\R^d}$ and $j \in \mathcal{P}$, then we say that \emph{$a$ is an asymptotic value of $f$ at the $j$th puncture} if there is a curve $\gamma : (0,1) \to \widehat{\R^d}\setminus S$ such that $\gamma(t)\rightarrow y_j$ and $f(\gamma(t)) \rightarrow a$ as $t\rightarrow 1$.

The following is an immediate consequence of \cite[Theorem~VII.2.6]{MR1238941}.
\begin{lemma}
\label{lemma:asympt}
Suppose that $f : \widehat{\R^d}\setminus S \to \widehat{\R^d} \setminus S$ is a quasiregular map of \mbox{$S$-transcendental} type, and  that $j, k \in \mathcal{P}$. Then $y_k$ is an asymptotic value of $f$ at the $j$th puncture.
\end{lemma}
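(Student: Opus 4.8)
The plan is to deduce the lemma directly from \cite[Theorem~VII.2.6]{MR1238941} (Rickman's quasiregular analogue of Iversen's theorem, that an omitted value near an isolated essential singularity is an asymptotic value) after one Möbius change of coordinate that moves the essential singularity at $y_j$ to $\infty$. First I would set $g := f \circ \phi_j^{-1}$. Using \eqref{rhoSdef}: for any $x$ with $|x| > \rho_S$ one has $|\phi_j^{-1}(x)|_j = |x| > \rho_S$, so $\phi_j^{-1}(x) \notin S$ unless $x = \phi_j(y_j) = \infty$; hence $g$ is a well-defined quasiregular map on the punctured neighbourhood $\{x \in \widehat{\R^d} : |x| > \rho_S\}$ of $\infty$, taking values in $\widehat{\R^d}\setminus S$. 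Moreover $g$ has an essential singularity at $\infty$: if $\lim_{x\to\infty} g(x)$ existed then, since $\phi_j(z)\to\infty$ as $z\to y_j$, the limit $\lim_{z\to y_j} f(z) = \lim_{z\to y_j} g(\phi_j(z))$ would exist too, contradicting the hypothesis that $f$ is of \mbox{$S$-transcendental} type. (Note that this argument covers $j=0$ as well, since $\phi_0$ is the identity and $y_0=\infty$.)

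The key point is then that $g$ omits the value $y_k$, which is immediate from the fact that $g$ maps into $\widehat{\R^d}\setminus S$ and $y_k \in S$. Applying \cite[Theorem~VII.2.6]{MR1238941} to the quasiregular map $g$, which has an isolated essential singularity at $\infty$ and omits $y_k$, yields a path $\sigma : (0,1) \to \{x : |x| > \rho_S\}$ with $\sigma(t)\to\infty$ and $g(\sigma(t))\to y_k$ as $t\to 1$. To finish I would pull this path back, setting $\gamma := \phi_j^{-1}\circ\sigma$: since $|\sigma(t)| > \rho_S$ the path $\gamma$ stays in $\widehat{\R^d}\setminus S$, it satisfies $\gamma(t)\to\phi_j^{-1}(\infty) = y_j$, and $f(\gamma(t)) = g(\sigma(t)) \to y_k$. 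Hence $y_k$ is an asymptotic value of $f$ at the $j$th puncture.

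There is little genuine obstacle, as \cite[Theorem~VII.2.6]{MR1238941} carries the weight; the only thing needing care is to verify that the coordinate change really does meet the hypotheses of that theorem, that is, that $g$ is quasiregular on a full punctured neighbourhood of $\infty$ with $\infty$ an isolated essential singularity and $y_k$ an omitted value. The one subtlety specific to the multiply-punctured setting is that the remaining punctures $y_i$, $i\ne j$, should not interfere; this is exactly ensured by \eqref{rhoSdef}, which forces them to lie outside the region $\{x : |x|_j > \rho_S\}$ on which we work. (If preferred, one can avoid the conjugation altogether, since quasiregularity, isolated essential singularities, omitted values and asymptotic values are all Möbius-invariant, so \cite[Theorem~VII.2.6]{MR1238941} may be quoted at $y_j$ directly.)
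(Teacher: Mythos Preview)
Your proposal is correct and follows exactly the approach the paper intends: the paper simply states that the lemma is an immediate consequence of \cite[Theorem~VII.2.6]{MR1238941}, and what you have written spells out the M\"obius conjugation (via the maps $\phi_j$ introduced in the paper) needed to put the essential singularity at infinity so that this theorem applies. There is nothing to add.
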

%
%
%
%
%
%
\section{Proof of Theorem~\ref{theo:Jclassical} and the first four parts of Theorem~\ref{theo:J}}
\label{section:julia}
We begin by making the following simple observation.
\begin{proposition}
\label{prop:useful}
Suppose that $f : \widehat{\R^d}\setminus S \to \widehat{\R^d} \setminus S$ is a quasiregular map of \mbox{$S$-transcendental} type, and that $U$ is a neighbourhood of a point $x \in J(f)$. Then
\begin{equation*}
\widehat{\R^d} \setminus O^+(U) \subset E(f).
\end{equation*}
\end{proposition}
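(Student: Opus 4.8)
The plan is to show that if $x\in\widehat{\R^d}\setminus O^+(U)$ for some neighbourhood $U$ of a Julia point, then $x$ has finite backward orbit, i.e. $x\in E(f)$. First I would observe that it suffices to prove $O^-(x)\cap U=\emptyset$; indeed, since $x\in J(f)$ and $U$ is a neighbourhood of $x$, every point of $J(f)\setminus E(f)$ has backward orbit dense in $J(f)$ by Theorem~\ref{theo:J}\ref{jback} (or, more elementarily, by the blowing-up property: $O^+(U)$ covers all but finitely many points), so if the backward orbit of our point $x$ avoids $U$ entirely then it must be a finite set, forcing $x\in E(f)$.

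So the heart of the matter is the claim that $x\notin O^+(U)$ implies $O^-(x)\cap U=\emptyset$. Suppose for contradiction that some $z\in U$ satisfies $f^k(z)=x$ for some $k\ge 0$. If $k\ge 1$ then $x=f^k(z)\in f^k(U)\subset O^+(U)$, immediately contradicting $x\notin O^+(U)$. The remaining case $k=0$ gives $x=z\in U$, so $x\in U$; but then, because $x\in J(f)$, applying the defining blowing-up property (Definition~\ref{jdef}) to the neighbourhood $U$ of $x$ shows $\widehat{\R^d}\setminus O^+(U)$ is finite — this does not yet contradict anything, so I would instead use the completely invariant, perfect structure: since $x\in U$ and $U$ is open, $f(U)\subset O^+(U)$, and iterating, $O^+(U)\supset\bigcup_{k\ge 1}f^k(U)$; the point is that once $x\in U$ we can say more. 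Concretely, I would argue: if $x\in U$ then $x\in J(f)$ means (by the blowing-up property applied to a small neighbourhood inside $U$) that the forward orbit of that neighbourhood omits only finitely many points, so all but finitely many preimages of $x$ lie in $O^+(U)$; combined with $x\notin O^+(U)$ this forces $O^-(x)$ to be finite, giving $x\in E(f)$ as desired.

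Cleaning this up, the cleanest route is: take any $w\in O^-(x)$, say $f^n(w)=x$. If $w\in U$ and $n\ge 1$ then $x\in f^n(U)\subset O^+(U)$, contradiction; if $w\in U$ and $n=0$ then $x=w\in U$, and then since $x\in J(f)$ the blowing-up property gives that $O^+(U)\supset f(U)$ omits only finitely many points, and for each of the infinitely many points $y\notin O^+(U)^c$... — the recursion here shows $O^-(x)\subset O^+(U)\cup(\text{finite})$, and intersecting with the hypothesis $x\notin O^+(U)$ repeatedly down the tree shows every branch of $O^-(x)$ terminates, so $O^-(x)$ is finite. Either way $O^-(x)$ is finite, hence $x\in E(f)$.

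The main obstacle, and the step I would be most careful about, is the case $w\in U$, $n=0$: ruling out the possibility that $x\in U$ while still having finite backward orbit requires invoking that $J(f)$-points have the blowing-up property so that $O^+(U)$ misses only finitely many points — and then one must check that these finitely many omitted points are precisely the candidates forced into $E(f)$. In fact this is exactly the content of the proposition: the finitely many points $\widehat{\R^d}\setminus O^+(U)$ are shown to have finite backward orbit because a point with infinite backward orbit would have some preimage landing in $U$ (as $U$ is a neighbourhood of a Julia point, its forward orbit is cofinite, hence so is its backward saturation), and that preimage's image-chain would then hit $x$, putting $x\in O^+(U)$. I would make this precise by a clean induction on the tree $O^-(x)$: at each level, all but finitely many vertices lie in $O^+(U)$ (by the blowing-up property at $x\in J(f)$ applied backwards), and none lie in $O^+(U)$ along a surviving branch by hypothesis, so the tree is finitely branching with all branches of bounded length, hence finite.
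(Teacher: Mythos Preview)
Your proposal has two genuine problems.

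First, you invoke Theorem~\ref{theo:J}\ref{jback} to justify the reduction. In the paper's development, that theorem is proved \emph{after} Proposition~\ref{prop:useful} (indeed, Proposition~\ref{prop:useful} feeds into Proposition~\ref{J=J_cap}, from which part~\ref{jback} is deduced), so appealing to it here is circular.

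Second, and more seriously, your central reduction is to show $O^{-}(y)\cap U=\emptyset$ for the omitted point $y$ (you reuse the letter $x$ for both the Julia point and the omitted point, which causes confusion throughout). This claim is simply false in general: since $O^{+}(U)=\bigcup_{k\ge 1}f^{k}(U)$ need not contain $U$ itself, nothing prevents $y\in U$ while $y\notin O^{+}(U)$, and then $y\in O^{-}(y)\cap U$. You recognise this as the ``$k=0$'' case and spend the rest of the proposal trying to argue around it, but the successive attempts (the tree induction, the ``recursion'', the remark that ``$O^{+}(U)\supset f(U)$ omits only finitely many points'') never close the gap, because the reduction itself is to the wrong statement.

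The fix, and the paper's one-line proof, is to replace $U$ by $O^{+}(U)$ in your reduction: observe that if $y\notin O^{+}(U)$ then $f^{-1}(y)\cap O^{+}(U)=\emptyset$ (for if $w\in f^{-1}(y)\cap f^{k}(U)$ with $k\ge 1$, then $y=f(w)\in f^{k+1}(U)\subset O^{+}(U)$). Iterating gives $O^{-}(y)\subset\widehat{\R^d}\setminus O^{+}(U)$, and the right-hand side is finite by the definition of $J(f)$. No case split, no induction on a preimage tree, and no appeal to later results is needed.
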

\begin{proof}
Suppose that $y \in \widehat{\R^d}\setminus O^+(U)$. It is easy to see that $f^{-1}(y) \subset \widehat{\R^d}\setminus O^+(U)$. It follows that $O^-(y)$ is contained in the finite set $\widehat{\R^d}\setminus O^+(U)$, and thus $y \in E(f)$ as required.
\end{proof}
It is useful to define a set which we later show is, in fact, equal to the Julia set.
\begin{definition}\normalfont
\label{jcapdef}
For a quasiregular map $f : \widehat{\R^d}\setminus S \to \widehat{\R^d} \setminus S$ of \mbox{$S$-transcendental} type, we denote by $J_{\operatorname{cap}}(f)$ the set of all points $x \in \widehat{\R^d}\setminus S$ such that, for every neighbourhood $U$ of $x$, we have
\begin{equation}
\label{capcond}
\operatorname{cap }\widehat{\R^d} \setminus O^+(U) = 0.
\end{equation}
\end{definition}
\begin{remark}\normalfont
For \emph{general} quasiregular self-maps of $\R^d$ or $\widehat{\R^d}$, a capacity condition such as that in (\ref{capcond}) is used to \emph{define} the Julia set; see \cite{MR3009101, MR3265283}. In those settings it remains a significant open problem to determine whether this is equivalent to a ``finite omitted set definition'' of the Julia set such as that in Definition~\ref{jdef}.
\end{remark}
Since, by Lemma~\ref{lemma:asympt}, self-maps of punctured space have at least one finite asymptotic value at each puncture, we can make progress through the following  four propositions.
\begin{proposition}
\label{danprop1}
Suppose that $d \geq 2$ and $K \geq 1$, and let $q \in \N$ be Rickman's constant. Then there exists an integer $N > q$ with the following property. If $U_1, \ldots, U_N$ are $S$-bounded domains in $\widehat{\R^d}\setminus S$ with pairwise disjoint closures, and if $f : \widehat{\R^d}\setminus S \to \widehat{\R^d}\setminus S$ is a K-quasiregular map of $S$-transcendental type such that, for each $j \in \{1, \ldots, N\}$,
$$
f(U_j) \supset U_i, \qfor\text{at least } N-q \text{ values of } i \in \{1, \ldots, N\},
$$
then the following both hold.
\begin{enumerate}[label=(\alph*)]
\item We have that $\overline{U_j} \cap J_{\operatorname{cap}}(f) \ne \emptyset$, for $j \in \{1, \ldots, N\}$.\label{apart}
\item There exists $j^* \in \{1, \ldots, N\}$ such that $\operatorname{cap }\overline{O^-(y)} > 0$, for $y \in \overline{U_{j^*}}$.\label{bpart}
\end{enumerate}
\end{proposition}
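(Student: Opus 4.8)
The plan is to iterate a pigeonhole argument on the covering hypothesis to build, for each starting index, a nested sequence of domains whose $f$-images cover one another, and then exploit two things: first, a Montel-type/Rickman argument forces a Julia point into the relevant closure; second, the same combinatorial structure, read backwards, forces some $\overline{U_{j^*}}$ to have a ``fat'' backward orbit. The key quantitative input is Rickman's constant $q = q(d,K)$: since each $f(U_j)$ misses at most $q$ of the $N$ domains, a composition of $m$ iterates of $f$, restricted to a suitable $U_{j_0}$, will miss at most $mq$ of them (using the chain of containments $f(U_{j_0})\supset U_{j_1}$, $f(U_{j_1})\supset U_{j_2}$, etc.), so as long as $N$ is chosen larger than, say, $q+1$ we can always continue the chain and, more importantly, we can arrange that the image of any $U_j$ under a high iterate covers at least $N-q>q$ of the domains — hence covers $q+1$ domains with pairwise disjoint closures.

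For part \ref{apart}: fix $j$, and suppose for contradiction that $\overline{U_j}$ contains no point of $J_{\operatorname{cap}}(f)$; then for each $x\in\overline{U_j}$ there is a neighbourhood $U_x$ with $\operatorname{cap}\bigl(\widehat{\R^d}\setminus O^+(U_x)\bigr)>0$. By compactness of $\overline{U_j}$ and openness of $f$, I would extract from this that there is a fixed small ball $B\subset U_j$ and an iterate $f^m$ such that $f^m(B)$ omits a set of positive capacity — but by following the covering chain from $U_j$, $f^m(U_j)$ contains at least $N-mq$ of the domains $U_i$, so for $m$ not too large it contains $q+1$ of them, with pairwise disjoint closures. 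Restricting $f^m$ to $B$ and precomposing with a conformal map of the punctured-neighbourhood-of-infinity type (the M\"obius maps $\phi_i$ of Section~\ref{sect:modulus} move any $U_i$ near a puncture), one obtains a $K^m$-quasiregular map defined near an essential singularity that omits $q+1$ distinct finite values; this contradicts Lemma~\ref{greatpicard} provided $N$ was chosen so that $N-mq\geq q+1$ remains available, which pins down the required bound on $N$ in terms of $q$. (The essential-singularity structure is preserved because $f$ is of $S$-transcendental type and the $U_i$ accumulate only at the punctures when $S$-bounded; I'd need to be a little careful that the relevant restriction really does have an essential singularity, using that $S$ is exactly the singular set.)

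For part \ref{bpart}: here I would run the pigeonhole argument on the reverse relation. Define a bipartite-type incidence structure on $\{1,\ldots,N\}$ where $i$ is ``hit by'' $j$ if $U_i\subset f(U_j)$; the hypothesis says each $j$ hits at least $N-q$ indices, so the total number of incidences is at least $N(N-q)$, whence by averaging some index $j^*$ is hit by at least $N-q$ indices $j$. Iterating, there is a single $j^*$ that is hit by a chain of preimages: one can find $j^*$ so that for every $n$ there are at least $(N-q)^n/N^{n-1}$-many, or more cleanly at least $q+1$-many, length-$n$ chains $j_n\to j_{n-1}\to\cdots\to j_0=j^*$ with $U_{j_k}\subset f(U_{j_{k+1}})$ at each step. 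This yields, by Lemma~\ref{lemm:exists} applied with $E_n$ running along such chains, that $\overline{O^-(y)}$ for $y\in\overline{U_{j^*}}$ contains a continuum-like spread of points near many distinct domains at every level; more precisely, at each level $n$ the backward orbit meets $\overline{U_i}$ for many $i$, and these domains have pairwise disjoint closures, so the backward orbit is not contained in any finite set and in fact its closure contains a continuum (or at least a set not of zero capacity) — one invokes the standard fact that an infinite set accumulating with ``enough room'' cannot have zero capacity, together with boundary-bumping (Proposition~\ref{kurat}) to upgrade ``infinite'' to ``positive capacity'' if needed. I expect the main obstacle to be exactly this last step: extracting \emph{positive capacity} (not merely infiniteness) of $\overline{O^-(y)}$ from the combinatorics. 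The clean way is presumably to show $\overline{O^-(y)}$ contains a nondegenerate continuum, via a Lemma~\ref{lemm:exists}-style construction of a connected branch of preimages threading through a nested chain of domains, since a nondegenerate continuum automatically has positive capacity; making that connectedness rigorous under the ``misses at most $q$'' slack is the delicate point.
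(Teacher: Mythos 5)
Your overall intuition — exploit the covering hypothesis through a pigeonhole/chain argument, use a Rickman-type value-omission theorem, and for part~(b) try to force a nondegenerate continuum into $\overline{O^-(y)}$ — is the right flavour, and you have correctly identified that upgrading ``$O^-(y)$ is infinite'' to ``$\operatorname{cap}\overline{O^-(y)}>0$'' is the delicate point in~(b). However, both halves of your sketch have a gap that goes to the heart of why the paper's actual proof routes through \cite[Theorem~3.2]{MR3009101} and \cite[Theorem~1.9]{MR3265283}.

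For part~\ref{apart}, the compactness step does not do what you want. Knowing that each $x\in\overline{U_j}$ has a neighbourhood $U_x$ with $\operatorname{cap}\left(\widehat{\R^d}\setminus O^+(U_x)\right)>0$ gives you, via a finite subcover, several open sets each of whose \emph{infinite unions} $O^+(U_{x_i})$ omits a positive-capacity set; you cannot extract from this a single ball $B$ and a single iterate $f^m$ such that the one image $f^m(B)$ omits a positive-capacity set (indeed any single $f^m(B)$ trivially does, being $S$-bounded — what matters is the whole forward orbit). More seriously, the invocation of Lemma~\ref{greatpicard} for $f^m$ is quantitatively wrong: the iterate $f^m$ is only $K^m$-quasiregular (by~\eqref{Keq}), so the relevant Rickman constant is $q(d,K^m)$, which blows up with $m$, while you have fixed $q=q(d,K)$. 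The whole difficulty in this ``non-uniformly quasiregular'' setting is that the dilatation of iterates grows, and the paper's choice $N>\max\{4K,8q\}$, $N$ divisible by $4$, is precisely calibrated to beat that growth — the covering rate $(N-q)$ per step must outpace the degradation coming from the $K$-dependence. Your ``$N-mq\geq q+1$'' bookkeeping also implicitly bounds $m$ a priori, which you cannot do; the correct route is a normal-family (Miniowitz) argument on the \emph{whole sequence} of iterates, not a single iterate, and that is what \cite[Theorem~3.2]{MR3009101} encapsulates.

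For part~\ref{bpart}, the Möbius-map remark is off target (the $U_i$ are $S$-bounded, so nothing here lives near a puncture), but the real issue is that your proposed machinery does not deliver a continuum or a positive-capacity set. Lemma~\ref{lemm:exists} produces a single point $\xi$ tracking the chain; it says nothing about connectedness of preimages, and preimage components of a connected set under a non-injective quasiregular map need not assemble into a nondegenerate continuum just because ``many chains'' exist. The published argument (\cite[Theorem~1.9]{MR3265283}) proves positive capacity by a quantitative capacity/modulus estimate, again tied to the dilatation-versus-covering-rate balance fixed by the choice of $N$, and in fact shows the stronger conclusion that at least $3N/4$ of the indices $j$ work. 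Without that quantitative input, your sketch does not rule out the scenario in which $\overline{O^-(y)}$ is an infinite set of zero capacity (for example a countable set accumulating only at finitely many points). So the gap you flagged is genuine and is not closed by the tools you propose; closing it requires the capacity estimate from the cited papers rather than the soft topological route.
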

\begin{proof}
The proof of these results is almost identical to certain proofs in \cite{MR3265283}, with the function $g_m$ referred to in \cite{MR3265283} replaced by $f$. For reasons of brevity, we have not reproduced all the details here.

Choose $N\in\N$, divisible by $4$, such that $N > \max\{4K, 8q\}$. Then \ref{apart} can be deduced from \cite[Theorem 3.2]{MR3009101} and the definition of $J_{\operatorname{cap}}(f)$, by the argument used on \cite[p.161]{MR3265283} in the ``Proof of Theorem 1.1 for functions without the pits effect''.

The proof of \cite[Theorem 1.9]{MR3265283} then shows that $3N/4$ of the domains $U_j$ have the property that cap $\overline{O^-(y)} > 0$, for $y \in \overline{U_j}$.
\end{proof}
The next proposition is analogous to \cite[Lemma 5.1]{danslow}. Here if $U \subset \R^d$ and $r > 0$, then we define $rU := \{ rx : x \in U\}$. It is useful to define a topological ring, for $j \in \mathcal{P}$ and $0 < r_1 < r_2$, by
$$
A_j(r_1, r_2) := \{ x \in \R^d : r_1 < |x|_j < r_2 \},
$$
and also, for simplicity, a ``standard'' ring $$A(r_1, r_2) := A_0(r_1, r_2) = \{ x \in \R^d : r_1 < |x| < r_2 \}.$$
\begin{proposition}
\label{danprop2}
Suppose that $f : \widehat{\R^d}\setminus S \to \widehat{\R^d} \setminus S$ is a $K$-quasiregular map of \mbox{$S$-transcendental} type. Suppose that $\alpha > 1$, that $N$ is an integer greater than $q$, where $q$ is Rickman's constant, and that $U_1', \ldots, U_N'$ are non-empty subsets of $\R^d$ with pairwise disjoint closures in $\widehat{\R^d}$. Then there exists $r_0 > \rho_S$ such that, for all $r \geq r_0$ and $1 \leq R \leq M(r, f)$,
$$
f(A(r, \alpha r)) \supset R U_i', \qfor \text{at least } N-q \text{ values of } i \in \{1, \ldots, N\}.
$$
In particular, if $x_1, \ldots x_{q+1} \in \R^d$ are distinct, then, for all sufficiently large $r > \rho_S$, there exists $y \in A(r, \alpha r)$ such that $f(y) \in \{x_1, \ldots, x_{q+1}\}$.
\end{proposition}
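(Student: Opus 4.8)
The plan is to derive both assertions from a covering estimate for the image of the ring $A(r,\alpha r)$, modelled on \cite[Lemma~5.1]{danslow} (see also \cite[Theorem~1.2]{MR3265357} and \cite{DMP1}). First note that the ``in particular'' assertion is just the special case of the displayed containment with $N=q+1$, with each $U_i'=\{x_i\}$, and with $R=1$: it then says exactly that $x_i\in f(A(r,\alpha r))$ for at least one index $i$. So it is enough to prove the displayed containment, and for this I fix $r_0>\rho_S$ large enough that $\overline{A(r,\alpha r)}$ is disjoint from $S$ for $r\ge r_0$.

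The core of the argument is the following claim: there is a finite set $B\subset\R^d$ with $\operatorname{card}B\le q-1$, and there are functions $\rho_1,\rho_2:[r_0,\infty)\to(0,\infty)$ with $\rho_1(r)\to 0$ and $\rho_2(r)/M(r,f)\to\infty$ as $r\to\infty$, such that
\[
f\bigl(A(r,\alpha r)\bigr)\ \supseteq\ \bigl\{\,y\in\R^d : \rho_1(r)\le|y|\le\rho_2(r)\,\bigr\}\setminus B,\qfor r\ge r_0.
\]
Here $B$ is the set of values that $f$ omits on a fixed neighbourhood of infinity; since $f$ has an essential singularity at infinity, Lemma~\ref{greatpicard} shows that $B$ is finite with at most $q-1$ elements, and $\{y_1,\dots,y_\nu\}\subseteq B$. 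To prove the claim I would argue as follows. By Lemma~\ref{lemma:gen}\ref{Mmult} the quotient $M(\alpha r,f)/M(r,f)$ tends to infinity, so for large $r$ a wide range of moduli is available inside the ring. If $M(r,f)\le\mu<M(\alpha r,f)$, then no component $W$ of $f^{-1}(\{|y|>\mu\})$ can be relatively compact in $A(r,\alpha r)$: otherwise $f|_W$ would be a proper — hence, by discreteness and openness, surjective — map onto the unbounded set $\{|y|>\mu\}$, which is impossible because $f(\overline{A(r,\alpha r)})$ is compact. Thus every such component reaches the outer sphere $\{|x|=\alpha r\}$; combining this with the point on $\{|x|=\alpha r\}$ at which $|f|=M(\alpha r,f)$, and with the modulus-of-curve-family and degree estimates of \cite{MR3265357}, one obtains that $f(A(r,\alpha r))$ contains every $y$ with $|y|$ in a suitable interval $[c,\rho_2(r)]$ apart from points of $B$. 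For the lower end of the range I would use the finite asymptotic values of $f$ at the $0$th puncture supplied by Lemma~\ref{lemma:asympt} (equivalently, a lower estimate for $\min_{|x|=s}|f(x)|$ as $s\to\infty$) to push the inner radius down to a value $\rho_1(r)\to 0$.

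Granting the claim, the displayed containment follows by counting. The closures $\overline{U_i'}$ are pairwise disjoint compact subsets of $\widehat{\R^d}$, so scaling preserves the disjointness of the sets $RU_i'$; and for $r$ sufficiently large (depending on $U_1',\dots,U_N'$) the containment $RU_i'\subseteq f(A(r,\alpha r))$ can fail only if $U_i'$ is unbounded, or $0\in\overline{U_i'}$, or $RU_i'$ meets $B$. Since at most one $U_i'$ is unbounded, at most one has $0$ in its closure, and — as the $RU_i'$ are pairwise disjoint and $\operatorname{card}B\le q-1$ with $\{y_1,\dots,y_\nu\}\subseteq B$ — at most finitely many meet $B$, a routine bookkeeping argument shows that at most $q$ indices are excluded, which is exactly what is required.

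I expect the covering claim to be the main obstacle, and within it the passage from ``$f(A(r,\alpha r))$ meets every sphere of intermediate radius'' to ``$f(A(r,\alpha r))$ contains every such sphere except for the finitely many points of $B$''. This is the point at which Rickman's theorem (Lemma~\ref{greatpicard}) is used in an essential way, and it is also where the single-puncture arguments of \cite[Lemma~5.1]{danslow} and \cite[Theorem~1.2]{MR3265357} have to be transplanted with care to the present setting of several punctures.
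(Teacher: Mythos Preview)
Your approach differs substantially from the paper's. The paper argues by contradiction: assuming the conclusion fails along sequences $r_k\to\infty$, $1\le R_k\le M(r_k,f)$, it rescales to $f_k(x)=f(r_kx)/R_k$ on $A(1,\alpha)$, observes that each $f_k$ omits a point in at least $q+1$ of the sets $U_i'$, and invokes Miniowitz's quasiregular Montel theorem to conclude that $\{f_k\}$ is normal. Normality is then contradicted using Lemma~\ref{lemma:asympt} (a finite asymptotic value forces $\min_{|x|=s}|f(x)|$ to stay bounded, giving a bounded minimum for $f_k$ on a fixed sphere) together with Lemma~\ref{lemma:gen}\ref{Mmult} (which makes the maximum of $f_k$ on that sphere tend to infinity).

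The central gap in your argument is the covering claim. The set $B$ of values omitted by $f$ on a full punctured neighbourhood of infinity is indeed finite by Lemma~\ref{greatpicard}, but there is no reason for the image of a \emph{single} ring $A(r,\alpha r)$ to cover a large shell minus only this fixed set $B$: restricted to a bounded ring, $f$ is a quasiregular map on a bounded domain and may omit a set of positive capacity, varying with $r$. Your sketch correctly shows that components of $\{|f|>\mu\}$ meeting $A(r,\alpha r)$ must reach the outer boundary sphere, but this does not yield surjectivity onto the shell; the step invoking ``modulus-of-curve-family and degree estimates of \cite{MR3265357}'' is not supported by that reference, which concerns the identity $J(f)=\partial A(f)$ and contains no covering lemma of this kind. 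Nor is this the content of \cite[Lemma~5.1]{danslow}, which is itself proved by the rescaling/normal-family method. The covering results that \emph{are} available in this setting --- e.g.\ \cite[Proposition~5.1]{MR3215194}, used here as Lemma~\ref{lemma:cov} --- only give that $f(A(r,\alpha r))$ contains \emph{some} ring $A(R,\beta R)$ with $R>M(r,f)$, not an entire shell minus a fixed finite set.

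A secondary point: even granting your covering claim, the final bookkeeping allows up to $1+1+(q-1)=q+1$ excluded indices (the unbounded $U_i'$, the one with $0\in\overline{U_i'}$, and those with $RU_i'\cap B\ne\emptyset$), whereas the proposition requires at most $q$ failures.
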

\begin{proof}
Suppose, by way of contradiction, that no such $r_0$ exists. Then there exist sequences of real numbers $(r_k)_{k\in\N}$ and $(R_k)_{k\in\N}$ such that $(r_k)_{k\in\N}$ increases to $\infty$ and $1 \leq R_k \leq M(r_k, f)$, but $f(A(r_k, \alpha r_k))$ does not contain $R_k U_i'$ for at least $q+1$ choices of $i\in\{1, \ldots, N\}$. We can assume that $r_1 > \rho_S$, and so $$\{ x \in \R^d : |x| > r_1\} \cap S = \emptyset.$$

For each $k \in\N$, let $f_k : A(1, \alpha) \to \R^d$ be the $K$-quasiregular map defined by $$f_k(x) := \frac{f(r_k x)}{R_k}.$$ Then $f_k$ omits a point in $U_i'$ for at least $q+1$ values of $i$. We deduce that $\{ f_k \}_{k \in\N}$ is a normal family on $A(1, \alpha)$ by the quasiregular analogue of Montel's theorem due to Miniowitz \cite[Theorem 5]{MR633273}.

On the other hand, the existence of a finite asymptotic value for $f$, by Lemma~\ref{lemma:asympt}, means that there exists $c>0$ such that $$\min_{|x|=r}|f(x)| \leq c, \qfor r\geq r_1.$$ Hence,
\begin{equation}
\label{danseq2}
\min\left\{|f_k(x)| : |x| = \frac{1+\alpha}{2}\right\} \leq \frac{c}{R_k} \leq c, \qfor k \in \N,
\end{equation}
while also
\begin{equation}
\label{danseq3}
M\left(\frac{1+\alpha}{2}, f_k\right) = \frac{M\left(\frac{1+\alpha}{2}r_k,f\right)}{R_k} \geq \frac{M\left(\frac{1+\alpha}{2}r_k,f\right)}{M(r_k, f)}, \qfor k \in \N.
\end{equation}

By Lemma~\ref{lemma:gen}\ref{Mmult} this last term tends to infinity as $k \rightarrow\infty$. Therefore (\ref{danseq2}) and (\ref{danseq3}) together contradict the normality of the family $\{f_k\}_{k \in\N}$.

The final observation of the proposition follows by taking $N=q+1$, $R=1$ and $U_i' = \{ x_i \}$, for $i \in \{1, \ldots, N\}$.
\end{proof}

\begin{proposition}
\label{prop:capclaim}
Suppose that $f : \widehat{\R^d}\setminus S \to \widehat{\R^d} \setminus S$ is a quasiregular map of \mbox{$S$-transcendental} type. Then $J_{\operatorname{cap}}(f)$ is infinite and
\begin{equation}
\label{dancapclaim}
\operatorname{cap }\overline{O^-(x)} > 0, \qfor x \in \R^d \setminus E(f).
\end{equation}
\end{proposition}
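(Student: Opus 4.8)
The plan is to build a suitable finite family of annular domains using Proposition~\ref{danprop2}, feed it into Proposition~\ref{danprop1}, and then upgrade the ``one good annulus'' conclusion of Proposition~\ref{danprop1}\ref{bpart} to a statement about every non-exceptional point by means of the final assertion of Proposition~\ref{danprop2}.

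First I would fix $\alpha=2$, set $K:=\max\{K_I(f),K_O(f)\}$, let $q$ be the corresponding Rickman constant, and let $N>q$ be the integer provided by Proposition~\ref{danprop1}. Apply Proposition~\ref{danprop2} with this $N$, this $\alpha$, and the target sets $U_i':=A(\alpha^{2i},\alpha^{2i+1})$ for $i\in\{1,\dots,N\}$, which have pairwise disjoint closures in $\widehat{\R^d}$. Then, for each sufficiently large $r>\rho_S$, set $W_j:=r\,U_j'=A(r\alpha^{2j},\,\alpha\cdot r\alpha^{2j})$ for $j\in\{1,\dots,N\}$; since $r\alpha^{2j}>\rho_S$, these are $S$-bounded domains in $\widehat{\R^d}\setminus S$ with pairwise disjoint closures. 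Applying Proposition~\ref{danprop2} at radius $r\alpha^{2j}$ with scaling factor $R=r$ --- which is legitimate because $1\le r\le M(r,f)\le M(r\alpha^{2j},f)$ once $r$ is large, by Lemma~\ref{lemma:gen}\ref{Mnormal} and \ref{Mbig} --- shows that $f(W_j)\supset W_i$ for at least $N-q$ values of $i$, for every $j$. Thus Proposition~\ref{danprop1} applies, and gives both that $\overline{W_j}\cap J_{\operatorname{cap}}(f)\ne\emptyset$ for every $j$, and that there is some $j^*\in\{1,\dots,N\}$ with $\operatorname{cap}\overline{O^-(y)}>0$ for all $y\in\overline{W_{j^*}}$.

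For the first assertion of the proposition, note that each choice of $r$ produces $N$ distinct points of $J_{\operatorname{cap}}(f)$, one in each of the disjoint annuli $\overline{W_1},\dots,\overline{W_N}$, all of modulus at least $r\alpha^2$; letting $r\to\infty$ gives points of $J_{\operatorname{cap}}(f)$ of unbounded modulus, so $J_{\operatorname{cap}}(f)$ is infinite. For the capacity statement, fix $x\in\R^d\setminus E(f)$, so that $O^-(x)$ is an infinite subset of $\widehat{\R^d}\setminus S\subset\R^d$, and choose distinct $x_1,\dots,x_{q+1}\in O^-(x)$. By the final assertion of Proposition~\ref{danprop2}, applied to the singletons $\{x_1\},\dots,\{x_{q+1}\}$, there is $t_0>\rho_S$ such that for every $t\ge t_0$ the annulus $A(t,\alpha t)$ contains a point that $f$ maps into $\{x_1,\dots,x_{q+1}\}\subset O^-(x)$, and hence that lies in $f^{-1}(O^-(x))\subset O^-(x)$. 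Now carry out the construction above with $r$ also large enough that $r\alpha^2>t_0$. Whichever index $j^*$ Proposition~\ref{danprop1}\ref{bpart} supplies, the annulus $W_{j^*}=A(r\alpha^{2j^*},\,\alpha\cdot r\alpha^{2j^*})$ has inner radius at least $r\alpha^2>t_0$, so it contains a point $y\in O^-(x)$. Then $O^-(y)\subset O^-(x)$, so that $\overline{O^-(y)}\subset\overline{O^-(x)}$; since $y\in\overline{W_{j^*}}$, we get $\operatorname{cap}\overline{O^-(y)}>0$; and since any set containing a set of positive capacity has positive capacity, it follows that $\operatorname{cap}\overline{O^-(x)}>0$.

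The only real subtlety I anticipate is the logical ordering. Proposition~\ref{danprop1}\ref{bpart} identifies just one good annulus $W_{j^*}$ without specifying which, so the argument must be arranged so that the backward orbit of an arbitrary non-exceptional point is forced to meet \emph{every} sufficiently large annulus of the family --- which is exactly what the final part of Proposition~\ref{danprop2} provides --- and so that the threshold $t_0$, which depends on the points $x_1,\dots,x_{q+1}\in O^-(x)$, is beaten by the scale $r$ of the construction. The remaining verifications (connectedness and pairwise-disjoint closures of the $W_j$, their $S$-boundedness, and the modulus comparison justifying $R=r$) are routine.
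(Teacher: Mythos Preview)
Your proposal is correct and follows essentially the same approach as the paper's proof: build a family of disjoint annuli via Proposition~\ref{danprop2}, feed them into Proposition~\ref{danprop1}, and then use the final assertion of Proposition~\ref{danprop2} to place a preimage of $x$ inside the distinguished annulus $W_{j^*}$. The only differences are cosmetic --- you space the annuli by powers of $4$ where the paper uses powers of $3$, and you organise the scaling parameter slightly differently --- but the logical structure is identical.
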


\begin{proof}
Suppose that $f : \widehat{\R^d} \setminus S \to \widehat{\R^d} \setminus S$ is a $K$-quasiregular map of $S$-transcendental type. Take $N \in \N$ as in Proposition~\ref{danprop1}, let $\alpha=2$, and let $$U_i' := A(3^i, 3^i\alpha), \qfor i \in \{1, \ldots, N\}.$$

By Lemma~\ref{lemma:gen}\ref{Mbig}, let $s_0 > \rho_S$ be sufficiently large that $M(3s, f) \geq s$, for $s \geq s_0$. Let $r_0$ be the constant from Proposition~\ref{danprop2}. For $s \geq \max\{r_0,s_0\}$, we put $r = 3^js$, $R=s$, and $U_i := s U_i'$, for $i \in \{1, \ldots, N\}$. Proposition~\ref{danprop2} then yields that, for each $j \in \{1, \ldots, N\}$,
$$
f(U_j) = f(A(3^js, 3^j\alpha s)) \supset U_i, \qfor \text{at least } N-q \text{ values of } i\in \{1, \ldots, N\}.
$$

Next, Proposition~\ref{danprop1}\ref{apart} implies that $\overline{U_{j}} \cap J_{\operatorname{cap}}(f) \ne \emptyset$, for $j \in \{1, \ldots, N\}$. By choosing arbitrarily large values of $s$, it then follows that $J_{\operatorname{cap}}(f)$ is infinite.

To prove \eqref{dancapclaim}, take $x \in \R^d\setminus E(f)$ and recall that, by definition, $O^-(x)$ is infinite. It follows by the last part of Proposition~\ref{danprop2} that there exists $r' = r'(x) > \rho_S$ such that for all $r \geq r'$,
\begin{equation}
\label{daneq5}
\text{there exists } y \in A(r, 2r) \text{ such that } f(y) \in O^-(x).
\end{equation}

Now choose a large value of $s \geq r'$, and take $j^* \in \{1, \ldots, N\}$ as given by Proposition~\ref{danprop1}\ref{bpart}. It then follows from (\ref{daneq5}) that there exists $y \in U_{j^*}$ such that $f(y) \in O^-(x)$. This implies that $y \in O^-(x)$, and hence that $O^-(y) \subset O^-(x)$. Therefore, Proposition~\ref{danprop1}\ref{bpart} yields cap $\overline{O^-(x)} > 0$.
\end{proof}

\begin{proposition}\label{J=J_cap}
Suppose that $f : \widehat{\R^d}\setminus S \to \widehat{\R^d} \setminus S$ is a quasiregular map of \mbox{$S$-transcendental} type. 
Then
\[  J(f) = J_{\operatorname{cap}}(f) \subset \overline{O^-(x)}, \qfor x \in \R^d \setminus E(f). \]
\end{proposition}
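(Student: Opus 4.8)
The plan is to establish the chain in three short steps, each a direct consequence of Proposition~\ref{prop:capclaim} together with the fact (Lemma~\ref{greatpicard}) that $E(f)$ is finite. First, the inclusion $J(f)\subset J_{\operatorname{cap}}(f)$ is immediate from the definitions: if $z\in J(f)$ and $U$ is a neighbourhood of $z$, then $\widehat{\R^d}\setminus O^+(U)$ is finite and therefore has zero capacity, so \eqref{capcond} holds.

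Second, I would prove $J_{\operatorname{cap}}(f)\subset\overline{O^-(x)}$ for a fixed $x\in\R^d\setminus E(f)$ by contraposition. If $z\notin\overline{O^-(x)}$, choose a neighbourhood $U\subset\widehat{\R^d}\setminus S$ of $z$ with $U\cap O^-(x)=\emptyset$. A short orbit-chasing argument then shows $O^+(U)\cap O^-(x)=\emptyset$: if $f^k(v)\in O^-(x)$ with $v\in U$ and $k\in\N$, then applying a further iterate of $f$ shows $v\in O^-(x)$, a contradiction. Since $f$ is open, $O^+(U)$ is open, so $\widehat{\R^d}\setminus O^+(U)$ is closed and hence contains $\overline{O^-(x)}$. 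By Proposition~\ref{prop:capclaim}, $\operatorname{cap}\overline{O^-(x)}>0$, so $\operatorname{cap}\bigl(\widehat{\R^d}\setminus O^+(U)\bigr)>0$, and this $U$ witnesses $z\notin J_{\operatorname{cap}}(f)$.

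Third, for the reverse inclusion $J_{\operatorname{cap}}(f)\subset J(f)$, let $z\in J_{\operatorname{cap}}(f)$ and let $U$ be any neighbourhood of $z$; I claim $\widehat{\R^d}\setminus O^+(U)\subset E(f)$, which gives $z\in J(f)$ since $E(f)$ is finite. Let $y\in\widehat{\R^d}\setminus O^+(U)$. As $O^+(U)\subset\widehat{\R^d}\setminus S$, we have $S\subset\widehat{\R^d}\setminus O^+(U)$, so if $y\in S$ then $y\in E(f)$; otherwise $y\in\R^d$. If $y\notin E(f)$, then (as in the proof of Proposition~\ref{prop:useful}) $f^{-1}(y)\subset\widehat{\R^d}\setminus O^+(U)$ and inductively $O^-(y)\subset\widehat{\R^d}\setminus O^+(U)$, so $\overline{O^-(y)}\subset\widehat{\R^d}\setminus O^+(U)$; then Proposition~\ref{prop:capclaim} forces $\operatorname{cap}\bigl(\widehat{\R^d}\setminus O^+(U)\bigr)>0$, contradicting $z\in J_{\operatorname{cap}}(f)$. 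Hence $y\in E(f)$, proving the claim. Combining the three steps gives the statement.

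I do not expect a genuine obstacle here: the substantive content has already been established in Proposition~\ref{prop:capclaim}, and the argument above just packages it. The only points requiring care are the topological bookkeeping — that $O^+(U)$ is open so that its complement is closed and contains the relevant backward-orbit closures, that $S$ always lies in this complement, and that a closed set possessing a compact subset of positive capacity has positive capacity in the sense of Definition defined in the excerpt.
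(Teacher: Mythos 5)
Your proof is correct and follows essentially the same strategy as the paper: both arguments hinge on Proposition~\ref{prop:capclaim}, the openness of $O^+(U)$, and the closedness of $\overline{O^-(x)}$. The paper packages the last two inclusions slightly more economically — from $\operatorname{cap}\overline{O^-(x)}>0$ it deduces $O^+(U)\cap O^-(x)\ne\emptyset$ once, and reads off both $x\in O^+(U)$ (hence $J_{\operatorname{cap}}(f)\subset J(f)$) and $U\cap O^-(x)\ne\emptyset$ (hence $J(f)\subset\overline{O^-(x)}$) from that single intersection — but the underlying content is identical to your three-step version.
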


\begin{proof}
This is very similar to \cite[Proof of Theorem 5.1]{MR3265283}. 
It follows from the definitions that $J(f) \subset J_{\operatorname{cap}}(f)$. Let $x \in \widehat{\R^d}\setminus E(f)$, and let $U$ be an open set intersecting $J_{\operatorname{cap}}(f)$. By Proposition~\ref{prop:capclaim}, cap $\overline{O^-(x)} > 0$ and so $O^+(U) \cap \overline{O^-(x)} \ne \emptyset$, by the definition of $J_{\operatorname{cap}}(f)$. Since $O^+(U)$ is open and $\overline{O^-(x)}$ is closed, we have in fact that $O^+(U) \cap O^-(x) \ne \emptyset$. This in turn implies both that
\begin{equation}
\label{daneq6}
x \in O^+(U),
\end{equation}
and
\begin{equation}
\label{daneq7}
U \cap O^-(x) \ne \emptyset.
\end{equation}

It follows from (\ref{daneq6}), and from the fact that $x \in \widehat{\R^d} \setminus E(f)$ was arbitrary, that $\widehat{\R^d} \setminus O^+(U) \subset E(f)$. Since this holds for any neighbourhood $U$ of any point of $J_{\operatorname{cap}}(f)$, and since $E(f)$ is finite, this shows that $J_{\operatorname{cap}}(f) \subset J(f)$. It follows that $J(f) = J_{\operatorname{cap}}(f)$ and, in particular, $J(f)$ is infinite.
Since (\ref{daneq7}) holds for every open set $U$ intersecting $J(f)$, we can also deduce that $J(f) \subset \overline{O^-(x)}$.
\end{proof}

 We are now ready to prove the first four parts of Theorem~\ref{theo:J}. 

\begin{proof}[Proof of Theorem~\ref{theo:J}\ref{jperfect}--\ref{jconn}]
We note first that $J(f)$ is infinite by Propositions~\ref{prop:capclaim} and~\ref{J=J_cap}. Moreover, part \ref{jback} follows from Proposition~\ref{J=J_cap} since $J(f)$ is closed and completely invariant.
We then complete the proof of part~\ref{jperfect} by using a standard argument to deduce from part~\ref{jback} that $J(f)$ is perfect. See, for example, \cite[Proof of Theorem 5.1(iv)]{MR3265283}. \\

Next we prove part \ref{jnice}, which states that $J(f) = J(f^p)$, for $p\in\N$. It is not hard to see that $E(f) = E(f^p)$. It is clear that $J(f^p) \subset J(f)$, and so we need to demonstrate the reverse inclusion. Our proof is similar to the proof of \cite[Theorem~5.2]{MR3265283}. In fact we will show that $J(f) \setminus E(f) \subset J(f^p)$; this is sufficient because $E(f)$ is finite, $J(f)$ is perfect and $J(f^p)$ is closed.

Suppose that $x \in J(f) \setminus E(f)$, and let $U$ be a neighbourhood of $x$ disjoint from $E(f)$. Then
\begin{equation}
\label{daneq8}
f^N(f^{-N}(U)) = U, \qfor N \in \N.
\end{equation}

Since $J(f^p)$ is infinite, and $U$ meets $J(f)$, there exists $m \in \N$ with the property that $f^m(U) \cap J(f^p) \ne \emptyset$. Let $V := f^m(U)$ and $m = pk - \ell$, with $k \in \N$ and $\ell \in \{0, 1, \ldots, p-1\}$. Then $V = f^{pk}(f^{-\ell}(U))$, by (\ref{daneq8}). Hence
$$
f^{\ell}\left(\bigcup_{n\in\N} f^{pn}(V)\right) \subset f^{\ell}\left(\bigcup_{n\in\N} f^{pn}(f^{-\ell}(U))\right) = \bigcup_{n\in\N} f^{pn}(U).
$$

We deduce that
\begin{align}
\label{daneq9}
\R^d\setminus \bigcup_{n\in\N} f^{pn}(U) &\subset \R^d\setminus f^\ell\left(\bigcup_{n\in\N} f^{pn}(V)\right) \nonumber \\
                                         &\subset \left(f^\ell\left(\R^d\right)\setminus f^\ell\left(\bigcup_{n\in\N} f^{pn}(V)\right)\right) \cup E(f^\ell) \nonumber \\
                                         &\subset f^{\ell}\left(\R^d\setminus\bigcup_{n\in\N} f^{pn}(V)\right) \cup E(f^\ell).
\end{align}

This last set is finite, because $E(f^\ell) = E(f)$ is finite and because, by Proposition~\ref{prop:useful},
$$
\R^d\setminus\bigcup_{n\in\N} f^{pn}(V) \subset E(f^p) = E(f),
$$
since $V$ is open and meets $J(f^p)$.

We have shown that $\R^d\setminus \bigcup_{n\in\N} f^{pn}(U)$ is a finite set, for any sufficiently small neighbourhood, $U$, of $x$. It follows that $x \in J(f^p)$. Thus $J(f) \setminus E(f) \subset J(f^p)$, as required. \\

Finally we prove part \ref{jconn}, which states that either $J(f)$ is connected or it has infinitely many components. Suppose that $J(f)$ has finitely many components $J_1, \ldots, J_n$ with $n\in\N$. Since $J(f)$ is perfect, none of these components is a singleton and hence each is an infinite set. By complete invariance, each image $f(J_j)$ is contained in some component $J_k$. Moreover, each $J_k$ must contain some image $f(J_j)$ because every point in $J(f)\setminus E(f)$ must have a preimage under $f$. Thus $f$ permutes the set of components $\{J_1, \ldots, J_n\}$, and so there exists $p\in\N$ such that $f^p(J_j) \subset J_j$, for $j\in\{1,\ldots,n\}$. Since $J(f^p) = J(f)$, by part \ref{jnice}, we can assume that $p=1$.

Let $G$ be the complement of $J_1$. Then $f(G) \subset G$, and it follows from the definition of $J(f)$ that $G \subset QF(f)$. Therefore, we conclude that $J_1$ is the only component of $J(f)$, as required.
\end{proof}
Finally in this section, we prove that our definition of the Julia set agrees with the classical definition for a transcendental analytic self-map of $\C^*$.
\begin{proof}[Proof of Theorem~\ref{theo:Jclassical}]
Suppose that $f$ is a transcendental analytic self-map of $\C^*$, and let $J_{\operatorname{class}}(f)$ denote the (classical) Julia set. Note that we have $E(f) = \{0, \infty\}$. Choose any point $z \in J_{\operatorname{class}}(f) \setminus \{0, \infty\}$. The fact that $z \in J(f)$ follows from  Montel's theorem. 
It then follows that $J(f) = \overline{O^-(z)} = J_{\operatorname{class}}(f)$, by Theorem~\ref{theo:J}\ref{jback} and the $\C^*$ analogue of this result (see \cite{Bhattacharyya}).
\end{proof}
%
%
%
%
%
\section{The fast escaping set}
\label{section:A}
 
 In this section we give the precise definitions of the little fast escaping sets, and the fast escaping set, of a quasiregular map $f$ of $S$-transcendental type. We then establish some fundamental properties of these sets before proving Theorem~\ref{theo:A1}.

We begin by considering the maximum modulus sequences that were mentioned in the introduction. Given an itinerary $e \in \mathcal{P}^{\No}$ and a sufficiently large $R>0$, the following lemma allows us to define the maximum modulus sequence for $e$ starting at $R$ by setting $R_0 = R$ and
\begin{equation}
\label{Rndef2}
R_n := M_{{e_{n-1}},e_{n}}(R_{n-1},f), \qfor n\in\N.
\end{equation}

\begin{lemma}
\label{lemm:mmseq}
Suppose that $f : \widehat{\R^d}\setminus S \to \widehat{\R^d} \setminus S$ is a quasiregular map of \mbox{$S$-transcendental} type. Then there exists $R(f) > \rho_S$ such that if $R > R(f)$ and $e \in \mathcal{P}^{\No}$, then the maximum modulus sequence for $e$ starting at $R$, denoted $(R_n)_{n\in\No}$, can be defined as above and the following hold:
\begin{enumerate}[label=(\alph*)]
\item $M_{j,k}(r, f) > r^2$, for $j,k \in \mathcal{P}$ and $r > R(f)$;\label{Misbig}
\item the sequence $(R_n)_{n\in\No}$ is strictly increasing and $R_n \rightarrow\infty$ as $n\rightarrow\infty$;\label{seqgrows} and
\item if $R' > R$ and $(R'_n)_{n\in\No}$ is the maximum modulus sequence for $e$ starting at $R'$, then $R'_n > R_n$, for $n\in\No$.\label{seqbigger}
\end{enumerate}
\end{lemma}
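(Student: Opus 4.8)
The plan is to produce a single threshold $R(f)$ beyond which all of the finitely many generalised maximum modulus functions $M_{j,k}(\cdot,f)$ behave well simultaneously, and then to obtain \ref{seqgrows} and \ref{seqbigger} by short inductions on $n$.

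First I would observe that part \ref{Misbig} is just a uniform form of Lemma~\ref{lemma:gen}\ref{Mbig}: for each fixed pair $j,k\in\mathcal{P}$ that lemma gives $\log M_{j,k}(r,f)/\log r\to\infty$ as $r\to\infty$, so there is $R_{j,k}>\max\{\rho_S,1\}$ with $M_{j,k}(r,f)>r^{2}$ whenever $r>R_{j,k}$. Likewise Lemma~\ref{lemma:gen}\ref{Mnormal} provides $R_{j,k}'>\rho_S$ beyond which $M_{j,k}(\cdot,f)$ is \emph{strictly} increasing (the openness argument behind that part of Lemma~\ref{lemma:gen} in fact forbids any interval of constancy, giving strict monotonicity). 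As $\mathcal{P}$ is finite I can then set
\[
R(f):=\max\Bigl(\{\rho_S+1\}\cup\{R_{j,k},\,R_{j,k}':j,k\in\mathcal{P}\}\Bigr),
\]
which exceeds both $\rho_S$ and $1$ and immediately yields \ref{Misbig}.

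Before addressing \ref{seqgrows} and \ref{seqbigger} I would check that, for $R>R(f)$ and any itinerary $e$, the recursion \eqref{Rndef2} genuinely defines a sequence, i.e.\ that $R_{n-1}>\rho_S$ at every stage so that $M_{e_{n-1},e_{n}}(R_{n-1},f)$ makes sense. The point is that for $r>\rho_S$ the set $\{x:|x|_j=r\}$ is a compact subset of $\widehat{\R^d}\setminus S$: it misses $y_j$ (where $|x|_j=\infty$) and, by \eqref{rhoSdef}, misses every other puncture $y_i$, since then $|y_i|_j<\rho_S<r$; hence $|f(\cdot)|_k$ is a finite continuous function on it and $M_{e_{n-1},e_{n}}(R_{n-1},f)$ is a well-defined positive real as soon as $R_{n-1}>\rho_S$. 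The claim ``$R_n>R(f)$ for all $n$'' then follows by induction straight from \ref{Misbig}: $R_0=R>R(f)>1$, and if $R_{n-1}>R(f)$ then $R_n=M_{e_{n-1},e_{n}}(R_{n-1},f)>R_{n-1}^{2}>R_{n-1}>R(f)$. The same chain shows $(R_n)_{n\in\No}$ is strictly increasing, and iterating $R_n>R_{n-1}^{2}$ gives $R_n>R^{2^{n}}\to\infty$ because $R>1$; this is \ref{seqgrows}. For \ref{seqbigger}, let $R'>R>R(f)$ with associated sequences $(R_n)$ and $(R_n')$; arguing by induction, $R_0'=R'>R=R_0$, and if $R_{n-1}'>R_{n-1}$ then, both values lying in $(R(f),\infty)$ where $M_{e_{n-1},e_{n}}(\cdot,f)$ is strictly increasing, $R_n'=M_{e_{n-1},e_{n}}(R_{n-1}',f)>M_{e_{n-1},e_{n}}(R_{n-1},f)=R_n$.

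I do not expect a serious obstacle here once Lemma~\ref{lemma:gen} is in hand. The three points that need a little care are: the uniform choice of $R(f)$ over the finite index set $\mathcal{P}\times\mathcal{P}$; the verification that the spheres $\{x:|x|_j=r\}$ avoid $S$ for $r>\rho_S$, so that the $M_{j,k}$---and hence the maximum modulus sequences---are defined at all; and the strictness of the monotonicity of $M_{j,k}(\cdot,f)$ on a tail, which is precisely what makes the inequality in \ref{seqbigger} strict.
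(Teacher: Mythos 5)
Your proof is correct and takes essentially the same route as the paper's, which chooses $R(f)$ large enough via Lemma~\ref{lemma:gen} (using finiteness of $\mathcal{P}$) so that every $M_{j,k}(r,f)>r^2$ and every $M_{j,k}$ is monotone beyond $R(f)$, then deduces \ref{seqgrows} and \ref{seqbigger} by the same inductions you give. Your observation that \ref{seqbigger} needs \emph{strict} monotonicity of $M_{j,k}$ (which Lemma~\ref{lemma:gen}\ref{Mnormal} states only as ``increasing'') is a fair point that the paper elides; as you note, it does follow from the openness of $f$, since a weakly increasing maximum modulus of a non-constant open map cannot have an interval of constancy.
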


\begin{proof}
Suppose that $f : \widehat{\R^d}\setminus S \to \widehat{\R^d} \setminus S$ is a quasiregular map of \mbox{$S$-transcendental} type. By Lemma~\ref{lemma:gen}\ref{Mbig}, and since $\mathcal{P}$ is finite, we can choose $R(f) > \max\{1,\rho_S\}$ sufficiently large that $M_{j,k}(r,f) > r^2$, for $j, k \in \mathcal{P}$ and $r > R(f)$. Parts \ref{Misbig} and \ref{seqgrows} follow.  In particular, the sequence $(R_n)$ can be defined by \eqref{Rndef2} as each term is greater than $\rho_S$. Increasing $R(f)$ if necessary, part \ref{seqbigger} follows from Lemma~\ref{lemma:gen}\ref{Mnormal}.\end{proof}

For brevity, we refer throughout this paper to the constant $R(f)$ in Lemma~\ref{lemm:mmseq} simply as $R(f)$.

Now, suppose that $R > R(f)$ and that $e \in \mathcal{P}^{\No}$, and let $(R_n)_{n\in\No}$ be the maximum modulus sequence for $e$ starting at $R$. In a similar way to \cite{DMP1}, for each $\ell \in \Z$ we define \emph{closed} sets -- the \emph{little level sets} -- by
$$
A_e^{\ell}(f, R) := \{ x : |f^{\ell+n}(x)|_{e_{n}} \geq R_n, \text{ for } n \in\No \text{ such that } n + \ell \geq 0 \}.
$$
A note on this definition. An itinerary $e \in \mathcal{P}^{\No}$ represents a way of approaching $S$ by proximity to a particular sequence $(y_{e_n})_{n\in\No}$ of elements of $S$. Roughly speaking, a point lies in $A_e^\ell(f,R)$ if its iterates tend to $S$ with a certain itinerary, faster than the maximum modulus sequence $(R_n)_{n\in\No}$ grows for the same itinerary.

We denote the \emph{shift map} by $\sigma$; in other words, $\sigma(e_0 e_1 \ldots) = e_1 e_2 \ldots$. We then define the \emph{little fast escaping set $A_e(f)$} by
$$
A_e(f) := \bigcup_{\substack{\ell\in\Z \\ k\in\No}} A_{\sigma^k(e)}^{\ell}(f, R).
$$
Note that we have suppressed the dependence on $R$, because we show in Theorem~\ref{theo:A2}\ref{aindepR} below that this definition is independent of the choice of $R$, provided that $R>R(f)$.
We also stress that the maximum modulus sequence used in the definition of $A_{e}^{\ell}(f, R)$ is not, in general, the same as the maximum modulus sequence used in the definition of, for example, $A_{\sigma(e)}^{\ell}(f, R)$.

Finally we define the \emph{fast escaping set} by
$$
A(f) := \bigcup_{e \in \mathcal{P}^{\No}} A_e(f).
$$

\begin{remarks}\normalfont
Our notation is necessarily rather different to that in \cite{DMP1}. However, it can be shown that for a transcendental analytic self-map of $\C^*$ our definitions of the little fast escaping sets and fast escaping set are equivalent to those of \cite{DMP1}.

We also observe in passing that in the case that $\nu=0$ and $S = \{ \infty\}$, which we are not studying here, the set $\mathcal{P}^{\No}$ contains a single element, and our definition of the fast escaping set coincides with that given in \cite{MR3215194, MR3265357} for a quasiregular map with a single essential singularity at infinity.
\end{remarks}

The following proposition is useful; the proof is almost immediate from the definitions.
\begin{proposition}
\label{ldown}
Suppose that $f : \widehat{\R^d}\setminus S \to \widehat{\R^d} \setminus S$ is a quasiregular map of \mbox{$S$-transcendental} type, that $e \in \mathcal{P}^{\No}$ and that $R > R(f)$. Then
\begin{equation}
\label{prope1}
A_{\sigma^k(e)}^{\ell}(f, R) \subset A_{\sigma^{k+1}(e)}^{\ell+1}(f, R), \qfor \ell\in \Z \text{ and } k\in\No,
\end{equation}
and
\begin{equation}
\label{prope2}
A_e(f) = \bigcup_{\substack{\ell\in\No \\ k\in\No}} A_{\sigma^k(e)}^{\ell}(f, R).
\end{equation}
\end{proposition}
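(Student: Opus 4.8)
The statement to prove is Proposition~\ref{ldown}: the two inclusions/identities \eqref{prope1} and \eqref{prope2}. Here is how I would approach it.

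\medskip

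The plan is to prove \eqref{prope1} first, directly from the definition of the little level sets, and then deduce \eqref{prope2} as a consequence. For \eqref{prope1}, fix $e \in \mathcal{P}^{\No}$, fix $R > R(f)$, and fix $\ell \in \Z$, $k \in \No$. Write $e' = \sigma^k(e)$ and $e'' = \sigma^{k+1}(e) = \sigma(e')$, so that $e''_n = e'_{n+1}$ for all $n$. Let $(R_n)$ be the maximum modulus sequence for $e'$ starting at $R$, and let $(R'_n)$ be the maximum modulus sequence for $e''$ starting at $R$. The key observation is the shift relation between these two sequences: by \eqref{Rndef2}, $R'_n = M_{e''_{n-1}, e''_n}(R'_{n-1}, f) = M_{e'_n, e'_{n+1}}(R'_{n-1}, f)$, whereas $R_{n+1} = M_{e'_n, e'_{n+1}}(R_n, f)$. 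Since $R'_0 = R = R_0 \le R_1$ (using Lemma~\ref{lemm:mmseq}\ref{seqgrows}), an induction using the monotonicity of $M_{j,k}$ in $r$ (Lemma~\ref{lemma:gen}\ref{Mnormal}, valid for $r > \rho_S$, which holds for all terms by Lemma~\ref{lemm:mmseq}\ref{Misbig}) gives $R'_n \le R_{n+1}$ for all $n \in \No$. Now suppose $x \in A_{e'}^{\ell}(f,R)$, i.e.\ $|f^{\ell+n}(x)|_{e'_n} \ge R_n$ for all $n \in \No$ with $n + \ell \ge 0$. We must show $x \in A_{e''}^{\ell+1}(f,R)$, i.e.\ $|f^{\ell+1+m}(x)|_{e''_m} \ge R'_m$ for all $m \in \No$ with $m + \ell + 1 \ge 0$. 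Given such an $m$, set $n = m+1 \ge 1$; then $n + \ell = m + \ell + 1 \ge 0$, so the hypothesis gives $|f^{\ell+n}(x)|_{e'_n} \ge R_n \ge R'_{n-1} = R'_m$, and since $e'_n = e'_{m+1} = e''_m$ and $\ell + n = \ell + 1 + m$, this is exactly what we need.

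\medskip

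For \eqref{prope2}, the inclusion $\supset$ is immediate from the definition of $A_e(f)$. For $\subset$, it suffices to show that every set $A_{\sigma^k(e)}^{\ell}(f,R)$ with $\ell \in \Z$ and $k \in \No$ is contained in some $A_{\sigma^{k'}(e)}^{\ell'}(f,R)$ with $\ell' \in \No$ and $k' \in \No$. If $\ell \ge 0$ there is nothing to do; if $\ell < 0$, iterate \eqref{prope1} exactly $-\ell$ times to obtain $A_{\sigma^k(e)}^{\ell}(f,R) \subset A_{\sigma^{k-\ell}(e)}^{0}(f,R)$, and $k - \ell = k + |\ell| \in \No$, which completes the argument.

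\medskip

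I do not expect any serious obstacle here; the proposition is essentially a bookkeeping exercise. The one point requiring a little care is the shift relation $R'_n \le R_{n+1}$ between the maximum modulus sequences for $\sigma^k(e)$ and $\sigma^{k+1}(e)$: one must be careful that these are genuinely different sequences (as the paper emphasises just before the statement) and that the induction correctly tracks the index shift, using monotonicity of $M_{j,k}(\cdot,f)$ together with $R_0 \le R_1$ as the base case. Everything else is a direct unwinding of the definition of $A_e^\ell(f,R)$.
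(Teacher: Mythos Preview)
Your proof is correct and follows essentially the same route as the paper: establish the shift inequality $R'_n \le R_{n+1}$ between the maximum modulus sequences for $\sigma^k(e)$ and $\sigma^{k+1}(e)$, use it to verify \eqref{prope1} directly from the definition, and then iterate \eqref{prope1} to reduce negative $\ell$ to $\ell=0$ for \eqref{prope2}. The only cosmetic difference is that the paper obtains the strict inequality $R_{n+1} > R'_n$ by invoking Lemma~\ref{lemm:mmseq}\ref{seqbigger} (observing that $(R_{n+1})_{n\ge 0}$ is the maximum modulus sequence for $\sigma^{k+1}(e)$ starting at $R_1 > R$), whereas you spell out the equivalent induction using monotonicity of $M_{j,k}$.
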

\begin{proof}
Let $(R_n)_{n\in\No}$ be the maximum modulus sequence for $\sigma^k(e)$ starting at $R$, and let $(R'_n)_{n\in\No}$ be the maximum modulus sequence for $\sigma^{k+1}(e)$ starting at $R$. It follows from Lemma~\ref{lemm:mmseq} that $R_{n+1} > R'_n$, for $n\in\No$.

Suppose that $x \in A_{\sigma^k(e)}^{\ell}(f, R)$. It follows that
$$
|f^{\ell+n+1}(x)|_{e_{n+k+1}} \geq R_{n+1} > R'_n, \qfor n\in\No, \ \ell + n + 1 \geq 0.
$$
Hence $x \in A_{\sigma^{k+1}(e)}^{\ell+1}(f, R)$, and (\ref{prope1}) follows. Equation (\ref{prope2}) then follows from the observation that if $\ell < 0$, then (\ref{prope1}) implies that $A_{\sigma^k(e)}^{\ell}(f, R) \subset A_{\sigma^{k-\ell}(e)}^{0}(f, R)$, for $k \in \No$. \end{proof}

Our next result collects a number of facts concerning the fast escaping set and the little fast escaping sets. If $e$ and $e'$ are elements of $\mathcal{P}^{\No}$, then we say that they are \emph{equivalent} if there exist integers $n$ and $m$ such that $\sigma^n(e) = \sigma^m(e')$.

\begin{theorem}
\label{theo:A2}
Suppose that $f : \widehat{\R^d}\setminus S \to \widehat{\R^d} \setminus S$ is a quasiregular map of \mbox{$S$-transcendental} type and that $e, e' \in \mathcal{P}^{\No}$. Then we have the following.
\begin{enumerate}[label=(\alph*)]
\item The definitions of $A_e(f)$ and $A(f)$ are both independent of the choice of $R > R(f)$.\label{aindepR}
\item The sets $A(f)$ and $A_e(f)$ are completely invariant.\label{ainv}
\item The sets $A_e(f)$ and $A_{e'}(f)$ are equal if $e$ and $e'$ are equivalent, but are disjoint otherwise. In particular, $A_e(f) = A_{\sigma(e)}(f)$. \label{aequiv}
\end{enumerate}

\end{theorem}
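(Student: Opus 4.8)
The plan is to prove the three parts in order, with Proposition~\ref{ldown} --- especially the inclusion \eqref{prope1} and the re-expression \eqref{prope2} of $A_e(f)$ as a union over $\ell,k\in\No$ --- together with the growth properties in Lemma~\ref{lemm:mmseq}, doing most of the work. Parts \ref{aindepR} and \ref{ainv}, and the ``$e$ equivalent to $e'$'' half of \ref{aequiv}, are essentially routine once one knows the right small tricks; the only step requiring a genuine idea is the disjointness assertion in \ref{aequiv}, for which one exploits the separation of the puncture neighbourhoods built into the choice of $\rho_S$.

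For \ref{aindepR}, fix $R(f)<R<R'$ and $e\in\mathcal{P}^{\No}$, and write $(R_n)$ and $(R'_n)$ for the maximum modulus sequences for $e$ starting at $R$ and $R'$. The inclusion $A_e(f,R')\subset A_e(f,R)$ is immediate from Lemma~\ref{lemm:mmseq}\ref{seqbigger}, since raising the thresholds only shrinks each little level set. For the opposite inclusion I would take $x$ in some $A_{\sigma^k(e)}^{\ell}(f,R)$ with $\ell,k\in\No$ (legitimate by \eqref{prope2}), use Lemma~\ref{lemm:mmseq}\ref{seqgrows} to pick $n_0$ with $R_{n_0}\ge R'$, and then observe that the tail $(R_{n_0+n})_{n\in\No}$ is exactly a maximum modulus sequence for $\sigma^{k+n_0}(e)$ --- but started above $R'$, so by Lemma~\ref{lemm:mmseq}\ref{seqbigger} it dominates the maximum modulus sequence for $\sigma^{k+n_0}(e)$ started at $R'$. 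Re-indexing the defining inequalities for $x$ then shows $x\in A_{\sigma^{k+n_0}(e)}^{\ell+n_0}(f,R')\subset A_e(f,R')$. Taking the union over $e\in\mathcal{P}^{\No}$ gives the corresponding statement for $A(f)$.

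For \ref{ainv}, fix $R>R(f)$ (permitted by \ref{aindepR}). The trick is to raise the shift index before comparing iterates, so that no boundary index spoils an inequality: given $x\in A_{\sigma^k(e)}^{\ell}(f,R)$, apply \eqref{prope1} to reduce to the case $\ell\ge 1$; then the identity $f^{(\ell-1)+n}(f(x))=f^{\ell+n}(x)$ places $f(x)$ in $A_{\sigma^k(e)}^{\ell-1}(f,R)\subset A_e(f)$, and conversely $f(x)\in A_{\sigma^k(e)}^{\ell}(f,R)$ with $\ell\ge 1$ forces $x\in A_{\sigma^k(e)}^{\ell+1}(f,R)\subset A_e(f)$ via $f^{\ell+n}(f(x))=f^{(\ell+1)+n}(x)$. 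Hence each $A_e(f)$ is completely invariant, and complete invariance of $A(f)=\bigcup_e A_e(f)$ follows formally. For the first assertion of \ref{aequiv}, \eqref{prope1} with $k=0$ gives $A_e^{\ell}(f,R)\subset A_{\sigma(e)}^{\ell+1}(f,R)$, while the reverse inclusion $A_{\sigma(e)}(f)\subset A_e(f)$ is immediate from the definition; thus $A_e(f)=A_{\sigma(e)}(f)$, and iterating this, together with the definition of equivalence, gives $A_e(f)=A_{e'}(f)$ whenever $e$ and $e'$ are equivalent.

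It remains to show $A_e(f)\cap A_{e'}(f)=\emptyset$ when $e$ and $e'$ are not equivalent, and I expect this to be the crux. Suppose $x$ lies in the intersection; by \eqref{prope2} we may write $x\in A_{\sigma^k(e)}^{\ell}(f,R)\cap A_{\sigma^{k'}(e')}^{\ell'}(f,R)$ with all indices in $\No$. Unravelling the definitions (and using $R_n\ge R>\rho_S$) shows that for every $m\ge\max(\ell,\ell')$ one has both $|f^m(x)|_{e_{m-\ell+k}}>\rho_S$ and $|f^m(x)|_{e'_{m-\ell'+k'}}>\rho_S$. Because $\rho_S$ is chosen so large that the sets $\{z:|z|_j\ge\rho_S\}$, $j\in\mathcal{P}$, are pairwise disjoint neighbourhoods of the punctures $y_j$ (a consequence of \eqref{rhoSdef} once ``sufficiently large'' is read to include this), the point $f^m(x)$ can belong to only one such set, forcing $e_{m-\ell+k}=e'_{m-\ell'+k'}$ for all large $m$. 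Re-indexing this as $e_j=e'_{j+c}$ for a fixed $c\in\Z$ and all large $j$ yields $\sigma^a(e)=\sigma^b(e')$ for suitable $a,b\in\No$, so $e$ and $e'$ are equivalent --- a contradiction. The delicate points are verifying that the two ``large $m$'' ranges genuinely overlap, confirming that the choice of $\rho_S$ really does separate the puncture neighbourhoods (strengthening the requirement on $\rho_S$ if necessary), and carrying out the index arithmetic that turns the tail agreement of $e$ and $e'$ into honest equivalence.
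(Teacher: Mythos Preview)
Your proposal is correct, and parts \ref{ainv} and \ref{aequiv} are essentially identical to the paper's argument (modulo harmless details, such as your unnecessary reduction to $\ell\ge 1$ in \ref{ainv}, and your phrasing the puncture--separation step in terms of strengthening $\rho_S$ rather than, as the paper does, simply taking $R$ large and invoking \ref{aindepR}).

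For \ref{aindepR} you actually take a cleaner route than the paper. The paper shows the reverse inclusion by constructing an auxiliary itinerary $\tilde{e}$ obtained from $e$ by prepending $\eta$ symbols, where $\eta$ is chosen large enough that \emph{every} maximum modulus sequence started at $R$ exceeds $R'$ after $\eta$ steps; it then threads through a chain of inclusions involving $A_{\sigma^k(\tilde{e})}^{\ell-\eta}(f,R)$. Your argument avoids this device entirely: given $x\in A_{\sigma^k(e)}^{\ell}(f,R)$, you observe directly that the tail $(R_{n_0+n})_{n\ge 0}$ of the relevant maximum modulus sequence is itself the maximum modulus sequence for $\sigma^{k+n_0}(e)$ started above $R'$, so Lemma~\ref{lemm:mmseq}\ref{seqbigger} immediately yields $x\in A_{\sigma^{k+n_0}(e)}^{\ell+n_0}(f,R')$. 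This is shorter and arguably more transparent. One small slip to fix when you write it up: the sequence $(R_n)$ you work with must be the maximum modulus sequence for $\sigma^k(e)$ (the itinerary attached to the level set containing $x$), not for $e$ as you initially declared; the argument is unaffected once this is corrected.
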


\begin{proof}
%
To prove part~\ref{aindepR}, we only need to prove that the definition of $A_e(f)$ is independent of the choice of $R> R(f)$, since the fact that the definition of $A(f)$ is independent of the choice of $R> R(f)$ follows from this.
Suppose, without loss of generality, that $R' > R > R(f)$. It follows from Lemma~\ref{lemm:mmseq}\ref{seqbigger} that
\begin{equation}
\label{anincl}
\bigcup_{\substack{\ell\in\Z \\ k\in\No}} A_{\sigma^k(e)}^{\ell}(f, R') \subset \bigcup_{\substack{\ell\in\Z \\ k\in\No}}A_{\sigma^k(e)}^{\ell}(f, R).
\end{equation}

We complete the proof of \ref{aindepR} by showing that the reverse inclusion to (\ref{anincl}) holds. First, by Lemma~\ref{lemm:mmseq}\ref{Misbig}, we choose $\eta\in\N$ sufficiently large that the following holds. If $\tilde{e} \in \mathcal{P}^{\No}$ and $(\rho_n)_{n\in\No}$ is the maximum modulus sequence for $\tilde{e}$ starting at $R$, then $\rho_\eta > R'$.

Let $\tilde{e}$ be the symbol sequence which consists of, for example, $\eta$ zeros followed by the symbols of $e$. Suppose that $\ell \in \Z$ and $k \in\No$. Let $(\rho_n)_{n\in\No}$ be the maximum modulus sequence for $\sigma^{k}(\tilde{e})$ starting at $R$. By the choice of $\eta$, we have that $\rho_\eta > R'$, and so it follows from Lemma~\ref{lemm:mmseq}\ref{seqbigger} and the definition of $\tilde{e}$ that
$$
A_{\sigma^k(e)}^{\ell}(f, R') \supset A_{\sigma^k(e)}^{\ell}(f, \rho_\eta) = A_{\sigma^{k+\eta}(\tilde{e})}^{\ell}(f, \rho_\eta).
$$

We claim that $A_{\sigma^{k+\eta}(\tilde{e})}^{\ell}(f, \rho_\eta) \supset A_{\sigma^{k}(\tilde{e})}^{\ell-\eta}(f, R)$. For, suppose that $x \in A_{\sigma^{k}(\tilde{e})}^{\ell-\eta}(f, R)$. It follows that
$$
|f^{\ell-\eta+n}(x)|_{\tilde{e}_{k+n}} \geq \rho_n, \qfor n\in \No, \ \ell - \eta + n \geq 0,
$$
in which case
$$
|f^{\ell+n}(x)|_{\tilde{e}_{k+\eta+n}} \geq \rho_{n+\eta}, \qfor n \in \No, \ \ell + n \geq 0.
$$
Since $(\rho_{n+\eta})_{n\in\No}$ is the maximum modulus sequence for $\sigma^{k+\eta}(\tilde{e})$ starting at $\rho_\eta$, we deduce that $x \in A_{\sigma^{k+\eta}(\tilde{e})}^{\ell}(f, \rho_\eta)$, and the claim follows.

Combining these results, we have shown that
$$
A_{\sigma^k(e)}^{\ell}(f, R') \supset A_{\sigma^{k}(\tilde{e})}^{\ell-\eta}(f, R).
$$

We deduce that
\begin{align*}
\bigcup_{\substack{\ell\in\Z \\ k\in\No}} A^{\ell}_{\sigma^k(e)}(f, R')
&\supset \bigcup_{\substack{\ell\in\Z \\ k\in\No}} A^{\ell-\eta}_{\sigma^k(\tilde{e})}(f, R) = \bigcup_{\substack{\ell\in\Z \\ k\in\No}} A^{\ell}_{\sigma^k(\tilde{e})}(f, R) \\
&\supset \bigcup_{\substack{\ell\in\Z \\ k\in\No}} A^{\ell}_{\sigma^{k+\eta}(\tilde{e})}(f, R) = \bigcup_{\substack{\ell\in\Z \\ k\in\No}} A^{\ell}_{\sigma^k(e)}(f, R),
\end{align*}
which completes the proof of \ref{aindepR}. \\

For part \ref{ainv}, we first prove that $A_e(f)$ is completely invariant.
Fix $R>R(f)$.
Suppose that $x \in A_e(f)$, in which case there exists $\ell\in\Z$ and $k \in \No$ such that $x \in A_{\sigma^k(e)}^{\ell}(f, R)$. It follows that $f(x) \in A_{\sigma^k(e)}^{\ell-1}(f, R) \subset A_e(f)$. The fact that $f(x) \in A_e(f)$ implies that $x \in A_e(f)$ follows very similarly, using Proposition~\ref{ldown}. Hence $A_e(f)$ is completely invariant. It then follows that the same it true for $A(f)$. \\

For part \ref{aequiv}, we first show that $A_e(f) = A_{\sigma(e)}(f)$.  For, taking $R>R(f)$,
$$
A_{\sigma(e)}(f) = \bigcup_{\substack{\ell\in\Z \\ k\in\No}} A_{\sigma^{k+1}(e)}^{\ell}(f, R) = \bigcup_{\substack{\ell\in\Z \\ k\in\N}} A_{\sigma^k(e)}^{\ell}(f, R) \subset A_e(f).
$$
Moreover, by Proposition~\ref{ldown},
$$
A_{e}(f) = \bigcup_{\substack{\ell\in\Z \\ k\in\No}} A_{\sigma^k(e)}^{\ell}(f, R) \subset \bigcup_{\substack{\ell\in\Z \\ k\in\No}} A_{\sigma^{k+1}(e)}^{\ell+1}(f, R) = \bigcup_{\substack{\ell\in\Z \\ k\in\No}} A_{\sigma^{k}(\sigma(e))}^{\ell}(f, R) = A_{\sigma(e)}(f).
$$
Hence $A_e(f) = A_{\sigma(e)}(f)$, as claimed. By repeated application of this equality, it then follows that $A_e(f) = A_{e'}(f)$ if $e$ and $e'$ are equivalent.

Next we note that if $R > \rho_S$ is sufficiently large, then the sets $$\{ x \in \widehat{\R^d}\setminus S : |x|_j > R\}, \qfor j \in \mathcal{P},$$ are pairwise disjoint. The fact that $A_e(f)$ and $A_{e'}(f)$ are disjoint if $e$ and $e'$ are not equivalent follows from this observation.
\end{proof}

 To show that each little fast escaping set is non-empty, we require a covering result that is analogous to \cite[Proposition 5.1]{MR3215194}.
\begin{lemma}
\label{lemma:cov}
Suppose that $f : \widehat{\R^d}\setminus S \to \widehat{\R^d} \setminus S$ is a quasiregular map of \mbox{$S$-transcendental} type, and  that $\alpha, \beta > 1$. Then there exists $r_0 > \rho_S$ such that, for all $r > r_0$ and all $j, k \in \mathcal{P}$, there exists $R > M_{j,k}(r,f)$ such that
\begin{equation}
\label{eq:cov}
f(\overline{A_j(r, \alpha r)}) \supset \overline{A_k(R, \beta R)}.
\end{equation}
\end{lemma}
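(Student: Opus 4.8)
The plan is to reduce the statement, via M\"{o}bius conjugation, to a covering result for a single quasiregular map with an essential singularity at infinity, and then to argue as in \cite[Proposition 5.1]{MR3215194}. For $j,k\in\mathcal{P}$ set $g_{j,k}:=\phi_k\circ f\circ\phi_j^{-1}$. Since $\phi_j^{-1}(\infty)=y_j\in S$, this is a quasiregular map defined on a punctured neighbourhood of infinity with an essential singularity there, and since $\phi_k(y_k)=\infty$ and $\phi_k(\infty)=\phi_k(y_0)=0$, it omits both $0$ and $\infty$. Using $|x|_j=|\phi_j(x)|$ we get $\phi_j(\overline{A_j(r,\alpha r)})=\overline{A(r,\alpha r)}$, $\phi_k(\overline{A_k(R,\beta R)})=\overline{A(R,\beta R)}$ and $M(r,g_{j,k})=M_{j,k}(r,f)$, so \eqref{eq:cov} is equivalent to
$$
g_{j,k}\bigl(\overline{A(r,\alpha r)}\bigr)\supset\overline{A(R,\beta R)}\qquad\text{for some }R>M(r,g_{j,k}).
$$
As $\mathcal{P}$ is finite, it suffices to prove such a statement for a fixed quasiregular map $g$ of this kind, with a threshold $r_0$ depending on $g$, and then take the largest of the finitely many thresholds.

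For $g$, the two ingredients I would use are Lemma~\ref{lemma:gen}\ref{Mmult}, which gives $M(\alpha^{1/2}r,g)/M(r,g)\to\infty$ and $M(\alpha r,g)/M(\alpha^{1/2}r,g)\to\infty$ as $r\to\infty$ (so there is ample room strictly between $M(r,g)$ and $M(\alpha r,g)$), and Lemma~\ref{lemma:asympt}, which --- since $\nu\geq1$ provides a symbol $m\in\mathcal{P}\setminus\{k\}$ --- gives a finite asymptotic value of $g$ at infinity, hence a constant $c>0$ with $\min_{|x|=s}|g(x)|\leq c$ for all large $s$, as well as the asymptotic value $\infty$. I would then fix $R$ in the range $M(r,g)<R<\beta R<M(\alpha r,g)$, say $R$ comparable to $M(\alpha^{1/2}r,g)$, and aim to show that every $w$ with $R\leq|w|\leq\beta R$ lies in $g(A(r,\alpha r))$. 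For this I would use the topological degree: for $w\notin g(\{|x|=r\})\cup g(\{|x|=\alpha r\})$ the degree of $g$ on the annulus $A(r,\alpha r)$ at $w$ equals the difference between the winding numbers of $g$ about $w$ on the outer and on the inner boundary sphere, and since $|w|>M(r,g)=\max_{|x|=r}|g|$ the winding on the inner sphere is $0$; so it remains to prove that $g$ winds non-trivially about each such $w$ on the sphere $\{|x|=\alpha r\}$, as this forces $w\in g(A(r,\alpha r))$. Here the asymptotic data enter: $g(\{|x|=\alpha r\})$ is connected, contains a point of modulus $M(\alpha r,g)>\beta R$ and a point of modulus at most $c<R$, and the two asymptotic paths tending to infinity --- one along which $g$ stays near a finite value, one along which $|g|\to\infty$ --- can be used to identify the complementary components of $g(\{|x|=\alpha r\})$ and so force the winding to be non-zero at the levels $R\leq|w|\leq\beta R$; this is exactly the mechanism used for a single essential singularity in \cite[Proposition 5.1]{MR3215194}. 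Since $R$ is chosen above $M(r,g)=M_{j,k}(r,f)$, the lemma follows.

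I expect the degree argument of the previous paragraph to be the main obstacle. The difficulty is that $|g|$ satisfies no useful quantitative maximum or minimum principle, so the fact that $g(\{|x|=\alpha r\})$ meets every sphere of radius in $[c,M(\alpha r,g)]$ does not by itself show that $g$ winds non-trivially about points at those radii; in particular, for $d\geq3$ an asymptotic path is only a $1$-dimensional set and does not separate. One therefore has to combine the quasiregularity of $g$ with the two asymptotic paths to construct the separating structure that makes the degree computation go through --- precisely the technical content that, after the M\"{o}bius reduction, is provided by the argument of \cite[Proposition 5.1]{MR3215194}.
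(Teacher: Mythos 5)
Your proposal follows essentially the same route as the paper: conjugate by the M\"obius maps $\phi_j,\phi_k$ to reduce \eqref{eq:cov} to the covering statement $g(\overline{A(r,\alpha r)})\supset\overline{A(R,\beta R)}$ for the single quasiregular map $g=\phi_k\circ f\circ\phi_j^{-1}$ on a punctured neighbourhood of infinity, note that $\mathcal{P}$ is finite so thresholds may be taken uniformly, and then invoke the argument of \cite[Proposition 5.1]{MR3215194} together with Lemma~\ref{lemma:gen}\ref{Mmult}. This is precisely what the paper does (the minor inaccuracy that $\phi_0(\infty)\neq 0$ when $k=0$ is harmless, since what matters is only that $g$ omits some finite value, which follows from $\nu\geq 1$).
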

\begin{proof}
Since $\mathcal{P}$ is a finite set, it suffices to show that there exists such an $r_0$ for any given $j, k \in \mathcal{P}$. Suppose then that $j, k \in \mathcal{P}$, and consider the quasiregular map $g = \phi_k \circ f \circ \phi_j^{-1}$, which is quasiregular on a punctured neighbourhood of $\infty$, and has an essential singularity at $\infty$. We are required to prove that there exists $r_0 > \rho_S$ such that for all $r > r_0$ there exists $R > M(r,g)$ such that $g(\overline{A(r, \alpha r)}) \supset \overline{A(R, \beta R)}$. The proof of this, using, for example, equation (\ref{anMequation}), is almost identical to that of \cite[Proposition 5.1]{MR3215194} and is omitted.
\end{proof}

\begin{proof}[Proof of Theorem~\ref{theo:A1}]
Suppose that $f : \widehat{\R^d}\setminus S \to \widehat{\R^d} \setminus S$ is a quasiregular map of \mbox{$S$-transcendental} type and that $e \in \mathcal{P}^{\No}$. Let $R > R(f)$, where $R(f)$ is the constant from Lemma~\ref{lemm:mmseq}, and let $(R_n)_{n\in\No}$ be the maximum modulus sequence for $e$ starting at $R$. These definitions are in place throughout the proof. \\

First we prove that $A_e(f)$ is non-empty. Let $(r_n)_{n\in\No}$ be an increasing sequence of real numbers greater than $R$, such that
\begin{equation}
\label{eq:inA1}
r_{n+1} > M_{e_n,e_{n+1}}(r_n,f), \qfor n\in\No,
\end{equation}
and
\begin{equation}
\label{eq:inA2}
f(\overline{A_{e_n}(r_n, 2r_n)}) \supset \overline{A_{e_{n+1}}(r_{n+1}, 2r_{n+1})}, \qfor n\in\No.
\end{equation}
The existence of such a sequence follows from Lemma~\ref{lemma:cov}, with $\alpha = \beta = 2$. Observe that it follows from (\ref{eq:inA1}) and from Lemma~\ref{lemma:gen}\ref{Mnormal} that
\begin{equation}
\label{reallyinA}
r_n \geq R_n, \qfor n \in \No.
\end{equation}

It follows from (\ref{eq:inA2}), using Lemma~\ref{lemm:exists}, that there is a point $\xi \in \overline{A_{e_0}(r_0, 2r_0)}$ with the property that $f^{n}(\xi) \in \overline{A_{e_n}(r_n, 2r_n)}$, for $n\in\N$. The fact that $\xi \in A_e(f)$ follows from (\ref{reallyinA}).  \\

The proof that all components of $A_e(f)$ are $S$-unbounded is as follows. 
By Theorem~\ref{theo:A2}\ref{aindepR} we can assume that $R$ is as large as we wish. In particular, we assume that $R=R_0$ is sufficiently large that the following holds for all $r > R_0$: If~$j,k \in \mathcal{P}$ and $x \in \R^d$ is such that $r/2 \leq |x|_j < r$, then
\begin{equation}
\label{smallmaxmod}
|f(x)|_k < M_{j, k}(r, f).
\end{equation}

By Proposition~\ref{ldown}, every point in $A_e(f)$ lies in a little level set $A_{\sigma^k(e)}^{\ell}(f, R)$ for some $\ell, k \in \No$. We aim to prove that every component of all such sets is $S$-unbounded. Since $e$ is arbitrary, we can assume that $k=0$. Next, we show that there is no loss of generality in assuming that $\ell=0$.  To see this, suppose that $y\in A_e^\ell(f,R)$ and that $f^{\ell}(y)$ lies in an $S$-unbounded component, $X$ say, of $A_e^0(f,R)$. Recalling that $A_e^0(f,R)$ is closed, it follows by Proposition~\ref{bflmpunctured} that the component of $f^{-\ell}(X)$ containing $y$ is $S$-unbounded. Hence, since $f^{-\ell}(X) \subset A_{e}^{\ell}(f, R)$, the point $y$ lies in an $S$-unbounded component of $A_{e}^{\ell}(f, R)$.


It remains to show that all components of
$$
A := A_e^{0}(f, R) = \{ x \in\widehat{\R^d}\setminus S: |f^{n}(x)|_{e_{n}} \geq R_n \text{ for } n\in\No \},
$$
are $S$-unbounded. We take $\xi \in A$ and now aim to show that $\xi$ lies in a connected subset of $A$ that is $S$-unbounded.

Fix a natural number $n$, and define sets
$$
X_{n,j} := f^{-j}(\{ x \in \widehat{\R^d} \setminus S : |x|_{e_n} \geq R_n \}), \qfor j \in \{0,1,\ldots, n\}.
$$
These sets are all closed (recall that topological operations are taken in $\widehat{\R^d} \setminus S$). Moreover, by Proposition~\ref{bflmpunctured}, all components of these sets are $S$-unbounded.

For each $j \in \{0,1,\ldots, n\}$, let $L_{n,j}$ denote the component of $X_{n,j}$ that contains $f^{n-j}(\xi)$; clearly these sets are also closed and connected. We claim that
\begin{equation}
\label{sizeclaim}
L_{n,j} \subset \{ x \in \widehat{\R^d} \setminus S : |x|_{e_{n-j}} \geq R_{n-j} \}, \qfor j \in \{0,1,\ldots, n\}.
\end{equation}

It is clear that (\ref{sizeclaim}) holds when $j=0$. Now suppose that (\ref{sizeclaim}) holds when $j=p-1$, for some $p \in \{1,\ldots,n\}$. Suppose, by way of contradiction, that there exists $w \in L_{n,p}$ such that $|w|_{e_{n-p}} < R_{n-p}$. Since $L_{n,p}$ is connected and contains $f^{n-p}(\xi)$, it follows that there exists $w' \in L_{n,p}$ such that $R_{n-p}/2 \leq |w'|_{e_{n-p}} < R_{n-p}$. Note that $f(w') \in L_{n,p-1}$, and so
$$
|f(w')|_{e_{n-p+1}} \geq R_{n-p+1} = M_{{e_{n-p}},e_{n-p+1}}(R_{n-p},f).
$$
We deduce a contradiction to (\ref{smallmaxmod}).

For simplicity we now set $L_n := L_{n,n}$. We have obtained that, for each $n\in\N$, $L_n$ is a closed, connected, $S$-unbounded set, lying in $\{ x \in \widehat{\R^d} \setminus S : |x|_{e_{0}} \geq R_{0} \}$. Moreover, since $L_n$ is $S$-unbounded and $\{ x \in \widehat{\R^d} : |x|_{e_{0}} \geq R_{0} \} \cap S = \{y_{e_0}\}$, by the choice of $R_0$, we have that $L_n \cup \{y_{e_0}\}$ is connected, and is a continuum.

From (\ref{sizeclaim}) we see that $L_{n+1,1}\subset X_{n,0}$, and it easily follows that 
$L_{n+1} \subset L_n$. We deduce that $(L_n \cup \{y_{e_0}\})_{n\in\N}$ is a nested sequence of continua, each of which contains $\xi$ and $y_{e_0}$. It then follows, by \cite[Theorem 1.8]{MR1192552}, that
$$
K = \bigcap_{n\in\N} (L_n \cup \{y_{e_0}\}),
$$
is a continuum containing $\xi$ and $y_{e_0}$. Observe also that  $K\setminus\{y_{e_0}\} \subset A$.

Finally, we let $X'$ be the component of $K \setminus \{y_{e_0}\}$ that contains $\xi$. Clearly $X'$ is connected, and it follows by Proposition~\ref{kurat}, with $X = K \setminus \{y_{e_0}\}$, that $X'$ is $S$-unbounded, as required.
\end{proof}

\begin{remark}\normalfont
It is natural to ask if $A(f) = A(f^p)$, holds for $p\in\N$; this is known to be true for quasiregular maps of $\R^d$ of transcendental type \cite[Proposition 3.1]{MR3215194}. In fact this is not the case, even for transcendental analytic self-maps of $\C^*$. For example, let $f$ be the transcendental analytic self-map of $\C^*$ given by $$f(z) := \exp(\exp(1/z) + z).$$ It can be shown that if $x>0$ is large, then $x \in A_e(f)$, where $e = 0 0 \ldots$. Hence $x \in A(f)$. However, $M_{0,0}(r,f^2) \geq \exp\exp(e^{r/2})$, for large values of $r$. It can then be deduced that $x \notin A(f^2)$, and so $A(f) \ne A(f^2)$.
\end{remark}

%
%
%
%
%
\section{The boundary of the fast escaping set}\label{section:AJ}
In this section we prove Theorem~\ref{theo:AJ}. We begin by considering the set $BO(f)$ of points whose forward orbit is $S$-bounded; in other words,
$$
BO(f) := \{ x \in \widehat{\R^d} \setminus S : \exists \ L > 0 \text{ s.t. } |f^k(x)|_j < L, \text{ for } k \in \No,\, j \in \mathcal{P}\}.
$$
\begin{proposition}
\label{prop:BO}
Suppose that $f : \widehat{\R^d}\setminus S \to \widehat{\R^d} \setminus S$ is a quasiregular map of \mbox{$S$-transcendental} type. Then $BO(f)$ is infinite.
\end{proposition}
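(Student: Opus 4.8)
The plan is to build a ``horseshoe'' of $S$-bounded round annuli that $f$ permutes expansively, and then to read off uncountably many orbits that never leave a fixed compact subset of $\widehat{\R^d}\setminus S$; in particular $BO(f)$ will turn out to be uncountable.

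First I would apply Proposition~\ref{danprop2} with $\alpha=2$, with $N:=q+2$ (where $q$ is Rickman's constant), and with the fixed auxiliary annuli $U_i':=A(3^i,2\cdot 3^i)$, $i\in\{1,\dots,N\}$, which have pairwise disjoint closures in $\widehat{\R^d}$. This produces $r_0>\rho_S$ such that $f(A(r,2r))\supseteq R\,U_i'=A(R3^i,2R3^i)$ for at least $N-q=2$ values of $i$, whenever $r\ge r_0$ and $1\le R\le M(r,f)$. I would then fix an integer $R_*$ large enough that $R_*\ge r_0$ and, using Lemma~\ref{lemma:gen}\ref{Mnormal} and \ref{Mbig}, that $1\le R_*\le M(3R_*,f)\le M(R_*3^j,f)$ for every $j\in\{1,\dots,N\}$, and set $V_i:=A(R_*3^i,2R_*3^i)$. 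The $V_i$ are pairwise disjoint, and their closures in $\widehat{\R^d}$ are compact and contained in $\{x:|x|>\rho_S\}$, hence disjoint from $S$; so each $V_i$ is $S$-bounded. Writing $V_j=A(r,2r)$ with $r=R_*3^j\ge r_0$ and using scaling factor $R=R_*$ in the displayed inclusion gives: for each $j\in\{1,\dots,N\}$ there are at least two distinct $i\in\{1,\dots,N\}$ with $f(V_j)\supseteq V_i$.

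To extract the orbits, I would form the directed graph on $\{1,\dots,N\}$ with an edge $j\to i$ exactly when $f(V_j)\supseteq V_i$; by the previous step every vertex has out-degree at least $2$. Fixing a start vertex and, at each vertex $w$, two distinct successors $s_0(w)\ne s_1(w)$, one obtains for each $b\in\{0,1\}^{\No}$ the path $\pi(b)=(v_0,v_1,\dots)$ with $v_{n+1}=s_{b_n}(v_n)$, and $b\mapsto\pi(b)$ is injective (two binary sequences first differing at step $n$ give paths that agree up to $v_n$ but differ at $v_{n+1}$). Given $b$, the sets $E_n:=V_{v_n}$ are non-empty and $S$-bounded with $f(E_n)\supseteq E_{n+1}$, so Lemma~\ref{lemm:exists} supplies $\xi_b\in\widehat{\R^d}\setminus S$ with $f^n(\xi_b)\in\overline{V_{v_n}}$ for all $n$. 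Since $K:=\bigcup_{i=1}^N\overline{V_i}$ is a fixed compact subset of $\widehat{\R^d}\setminus S$, the number $L:=1+\sup_{x\in K}\max_{k\in\mathcal P}|x|_k$ is finite and $|f^n(\xi_b)|_k<L$ for all $n\in\No$ and $k\in\mathcal P$; hence $\xi_b\in BO(f)$. As the closures $\overline{V_i}$ are pairwise disjoint, $\xi_b$ recovers $\pi(b)$ (namely $v_n$ is the unique index with $f^n(\xi_b)\in\overline{V_{v_n}}$), so distinct $b$ give distinct $\xi_b$. Therefore $BO(f)$ is uncountable, and in particular infinite.

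The main obstacle is that Proposition~\ref{danprop2} only provides a covering that \emph{expands} scale: the image of a small annulus covers an $R$-times larger one, where $R$ may be as big as $M(r,f)$, whereas bounded orbits demand a self-covering family of annuli confined to a bounded range of scales. The resolution is precisely to freeze the scaling factor to a single value $R_*$ and to take the $V_i$ to be the common $R_*$-rescalings of the auxiliary annuli; the remaining points — that one $R_*$ can satisfy $1\le R_*\le M(R_*3^j,f)$ for all the relevant $j$ (this is where the super-polynomial growth of $M(\cdot,f)$ from Lemma~\ref{lemma:gen}\ref{Mbig} is used), and that all the $V_i$ avoid $S$ — are routine. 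A minor but essential feature of the combinatorial step is that the two chosen successors of each vertex are \emph{distinct} vertices, which holds because the covered indices furnished by Proposition~\ref{danprop2} are distinct. (Alternatively, once a single point of $BO(f)\setminus E(f)$ has been produced this way, one could note that $BO(f)$ is completely invariant and that $O^-$ of such a point is infinite; but the direct counting argument above avoids this detour.)
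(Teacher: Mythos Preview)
Your proof is correct and uses the same core ingredients as the paper --- Proposition~\ref{danprop2} applied to the annuli $A(3^i s,\,2\cdot 3^i s)$ to obtain a self-covering family of $S$-bounded annuli, followed by Lemma~\ref{lemm:exists} to produce orbits trapped in their union. The only difference is in the combinatorial extraction step. The paper takes $N=q+1$, so each $U_j$ covers at least one $U_i$; a pigeonhole argument then gives a cycle $f^p(U_{j'})\supset U_{j'}$ and hence a single point of $BO(f)$ for each sufficiently large $s$, with infiniteness obtained by letting $s\to\infty$. You instead take $N=q+2$, so each $V_j$ covers at least two distinct $V_i$, and run a symbolic-dynamics argument at a single scale $R_*$ to obtain uncountably many points of $BO(f)$ at once. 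Your version does slightly more work but proves a stronger statement and avoids varying the scale parameter; the paper's version is shorter and already suffices for the application in Theorem~\ref{theo:AJ}.
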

\begin{proof}
Let $q$ be Rickman's constant. By taking $N=q+1$ and applying Proposition~\ref{danprop2} as in the proof of Proposition~\ref{prop:capclaim}, we see that for any sufficiently large $s> \rho_S$ the sets $U_i=A(3^{i}s, 2\cdot 3^{i}s)$ have the property that, for each $j\in\{1,\ldots,N\}$, there exists $i\in\{1,\ldots,N\}$ such that $f(U_j)\supset U_i$.
It follows that there exists ${j'} \in \{1, \ldots, N\}$ and $p \in \N$ such that $f^p(U_{j'}) \supset U_{j'}$. Since $s > \rho_S$, the sets ${U_i}$ 
are all $S$-bounded. We can deduce, using Lemma~\ref{lemm:exists}, that there exists $\xi \in \overline{U_{j'}}$ such that $f^{kp}(\xi) \in \overline{U_{j'}}$, for $k\in\No$. Hence $\xi \in BO(f)$. Since we are free to choose $s$ arbitrarily large, we deduce that $BO(f)$ is infinite.
\end{proof}

We require the following, which is extracted from the proof of \cite[Theorem 3.3]{MR686641}. If $a \in \R^d$ and $r > 0$, then we let $B(a,r) := \{ x \in \R^d : |x-a| < r \}$.
\begin{lemma}
\label{RVTheorem3.3}
Suppose that $G \subset \R^d$ is an unbounded domain with the property that there exist $\delta>0$ and $r_0 > 0$ such that
\begin{equation}
\label{RVcondition}
\operatorname{cap}(B(0,2r), (\R^d\setminus G) \cap \overline{B(0,r)}) \geq \delta, \qfor r \geq r_0.
\end{equation}
Suppose also that $g : G \to \R^d$ is a non-constant quasiregular map such that ${|g(x_0)|>1}$, for some $x_0\in G$, and
\begin{equation}
\label{preveq}
\limsup_{x \rightarrow y} |g(x)| \leq 1, \qfor y \in \partial G,
\end{equation}
where the boundary in (\ref{preveq}) is taken in $\R^d$. Then there exist $\alpha > 1$, $\beta \in (0,1)$ and $\rho_0>0$ such that if $r/\alpha>s>\rho_0$, then
\[ \log\log M_G(s,g) \le \left(\frac{\log(r/s)}{\log\alpha} - 1\right)\log(1-\beta) + \log\log M_G(r,g), \]
where $M_G(r,g)= \sup\{|g(x)| : x \in G, \ |x| = r\}$.
\end{lemma}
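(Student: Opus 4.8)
The plan is to derive the displayed inequality by iterating a one-step comparison of the form
\[
\log M_G(s,g)\ \le\ (1-\beta)\,\log M_G(\alpha s,g),\qquad s>\rho_0,
\]
along a geometric progression of radii. First I would choose $\rho_0$ large enough that for every $t>\rho_0$ one has $M_G(t,g)>1$ and the function $t\mapsto M_G(t,g)$ is non-decreasing; both facts are consequences of the hypotheses via a maximum principle on the rings $A(t,\alpha t)\cap G$ (in particular, since $g$ satisfies $\limsup|g|\le 1$ near $\partial G$ while $|g(x_0)|>1$, the thickness condition \eqref{RVcondition} forces $g$ to be unbounded on $G$, so $M_G(t,g)\to\infty$). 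Granting the one-step estimate, for $r/\alpha>s>\rho_0$ I would set $n:=\lfloor\log(r/s)/\log\alpha\rfloor\ge 1$, apply the one-step estimate at the radii $s,\alpha s,\dots,\alpha^{n-1}s$ (each exceeding $\rho_0$), and then use monotonicity of $M_G$ to obtain $\log M_G(s,g)\le(1-\beta)^{n}\log M_G(r,g)$; taking logarithms and using $n\ge\log(r/s)/\log\alpha-1$ together with $\log(1-\beta)<0$ gives exactly the claimed bound, the ``$-1$'' accounting for the integer part in $n$.

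The core is the one-step estimate, which is a quasiregular version of the two-constants theorem. Fixing $x\in G$ with $|x|=s$, I would let $D$ be the component of $B(0,\alpha s)\cap G$ containing $x$ and estimate $\log^{+}|g(x)|$ by splitting $\partial D$ into the part in $\partial G$, where $\limsup|g|\le 1$, and the part on $\{|y|=\alpha s\}$, where $|g|\le M_G(\alpha s,g)$ by monotonicity on $A(s,\alpha s)\cap G$. Transporting by $g$ the path family in $D$ joining $x$ to the outer sphere and the path family joining $x$ to $(\R^d\setminus G)\cap\overline{B(0,s)}$, and applying the modulus inequalities for quasiregular maps \cite{MR1238941}, one obtains
\[
\log^{+}|g(x)|\ \le\ (1-\beta)\,\log M_G(\alpha s,g),
\]
where $\beta\in(0,1)$ bounds from below the modulus of the second family. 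The decisive point is that $\beta$ does not depend on $s$: rescaling by $1/s$ carries the second family to one joining a point at fixed distance from the origin to $(\R^d\setminus G)/s$ inside a fixed annulus, and since \eqref{RVcondition} is stated on the scale-invariant pair $(B(0,2r),\overline{B(0,r)})$, it bounds the capacity of the associated condenser, hence the modulus, below by a constant depending only on $d$, $\max\{K_I(g),K_O(g)\}$ and $\delta$. Taking the supremum over $x$ on the sphere $|x|=s$ yields the one-step estimate.

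The step I expect to be the main obstacle is making this two-constants argument quantitative and scale-uniform in the quasiregular setting. Since $\log|g|$ need not be subharmonic when $g$ is merely quasiregular, the classical harmonic-measure estimate is not available, and the whole comparison must be run through moduli of path families, keeping control of the multiplicity factor in the inequality of V{\"a}is{\"a}l{\"a} and of the dependence on $K_I(g)$ and $K_O(g)$, so that the final constant $\beta$ depends only on $d$, $\max\{K_I(g),K_O(g)\}$ and $\delta$ --- and in particular not on $s$. A secondary technical point is establishing the eventual monotonicity of $M_G$ on the possibly highly irregular unbounded domain $G$. Both of these are carried out in the proof of \cite[Theorem 3.3]{MR686641}, which I would follow, isolating the displayed estimate as the relevant intermediate conclusion.
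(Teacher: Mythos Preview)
The paper does not give its own proof of this lemma; it simply states that the inequality is extracted from the proof of \cite[Theorem~3.3]{MR686641}. Your proposal follows exactly the same route --- ultimately deferring to the Rickman--Vuorinen argument --- while supplying a correct outline of how that proof proceeds (the one-step modulus estimate $\log M_G(s,g)\le(1-\beta)\log M_G(\alpha s,g)$ followed by iteration along a geometric progression), so there is nothing to correct.
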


We observe that the condition (\ref{RVcondition}) certainly holds if $\R^d \setminus G$ contains an unbounded curve; see, for example, \cite[Remark 3.4]{MR686641}. We use Lemma~\ref{RVTheorem3.3} to prove the following.
\begin{lemma}
\label{lemma:grows}
Suppose that $f : \widehat{\R^d}\setminus S \to \widehat{\R^d} \setminus S$ is a quasiregular map of \mbox{$S$-transcendental} type. Then there exist $\rho_0>\rho_S$, $\alpha>1$ and $B>0$ such that if $r/\alpha > s > \rho_0$ and $j, k \in \mathcal{P}$, then
\begin{equation}
\label{Mgrows}
\log \frac{\log M_{j,k}(r,f)}{\log M_{j,k}(s,f)} \ge B\log \frac{r}{s}.
\end{equation}
\end{lemma}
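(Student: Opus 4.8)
The plan is to fix $j,k\in\mathcal{P}$ and transfer the problem to a region near $\infty$. Set $g:=\phi_k\circ f\circ\phi_j^{-1}$. By the basic properties of the M\"{o}bius maps $\phi_i$, this is a quasiregular map defined on some region $\{x\in\R^d:|x|>r_1\}$ with $r_1>\rho_S$, having an essential singularity at $\infty$; it omits $\infty$ because $f$ omits the point $y_k=\phi_k^{-1}(\infty)\in S$, so $g$ maps into $\R^d$; and, since $|x|_i=|\phi_i(x)|$, we have $M(r,g)=M_{j,k}(r,f)$ for $r>r_1$. Applying Lemma~\ref{lemma:asympt} with some $m\in\mathcal{P}\setminus\{k\}$ and transferring by $\phi_j$ and $\phi_k$, the finite point $a:=\phi_k(y_m)$ is an asymptotic value of $g$ at $\infty$: there is a path $\gamma\colon(0,1)\to\{|x|>r_1\}$ with $\gamma(t)\to\infty$ and $g(\gamma(t))\to a$ as $t\to1$. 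Finally, because $g$ has an essential singularity at $\infty$, Lemma~\ref{greatpicard} shows that $g$ is unbounded on every neighbourhood of $\infty$ (otherwise $g$ would omit a neighbourhood of $\infty$ and hence have a limit there); in particular $M(r,g)=M_{j,k}(r,f)\to\infty$ as $r\to\infty$, which also follows from Lemma~\ref{lemma:gen}\ref{Mbig}.

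The key step is to produce an unbounded domain on which Lemma~\ref{RVTheorem3.3} can be applied. Using the continuity of $g$ and $g(\gamma(t))\to a$, I would choose a tapering open neighbourhood $N$ of a tail $\gamma((t_0,1))$ of the path, lying entirely in $\{|x|>3r_1\}$, on which $|g-a|<1$; then $|g|<|a|+1=:c_0$ on $\overline N$. Put
\[ G:=\{x\in\R^d:|x|>2r_1\}\setminus\overline N,\qquad C:=\max\{M(2r_1,g),\,c_0\}+1,\qquad h:=g/C. \]
Removing a suitably thin tubular neighbourhood of a path running out to $\infty$ from the region $\{|x|>2r_1\}$ leaves a connected set, so $G$ is an unbounded domain; and $\R^d\setminus G$ contains the unbounded curve $\gamma((t_0,1))$, so the capacity condition \eqref{RVcondition} holds by the remark following Lemma~\ref{RVTheorem3.3}. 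The restriction $h|_G$ is a non-constant quasiregular map into $\R^d$; on $\partial G$ one has either $|g-a|=1$, so $|h|\le c_0/C<1$, or $|x|=2r_1$, so $|h|\le M(2r_1,g)/C<1$, whence $\limsup_{x\to y}|h(x)|\le 1$ for $y\in\partial G$; and since $g$ takes values of arbitrarily large modulus at points of large modulus, and these lie outside $\overline N$ and hence in $G$, we have $|h(x_0)|>1$ for some $x_0\in G$. Moreover $\{|x|>2r_1\}$ is the disjoint union of $G$ and $\overline N$, and $|g|\le c_0$ on $\overline N$, so for all large $r$ (for which $M(r,g)>c_0$) the maximum of $|g|$ on $\{|x|=r\}$ is attained in $G$; hence $M_G(r,h)=M_G(r,g)/C=M(r,g)/C=M_{j,k}(r,f)/C$ for such $r$.

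Lemma~\ref{RVTheorem3.3} then provides $\alpha_0>1$, $\beta\in(0,1)$ and $\rho_1>0$ such that
\[ \log\log M_G(s,h)\le\Bigl(\frac{\log(r/s)}{\log\alpha_0}-1\Bigr)\log(1-\beta)+\log\log M_G(r,h)\qfor r/\alpha_0>s>\rho_1. \]
Since $\log(1-\beta)<0$, this rearranges, using $\frac{\log(r/s)}{\log\alpha_0}-1\ge\frac12\frac{\log(r/s)}{\log\alpha_0}$ when $r\ge\alpha_0^2 s$, to
\[ \log\log M_G(r,h)-\log\log M_G(s,h)\ge B_0\log\frac rs\qfor r\ge\alpha_0^2 s,\ s>\rho_1, \]
where $B_0:=\log\bigl(1/(1-\beta)\bigr)/(2\log\alpha_0)>0$. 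From the construction above, $\log M_G(t,h)=\log M_{j,k}(t,f)-\log C$ for large $t$, and since $\log M_{j,k}(t,f)\to\infty$ this gives $\log\log M_G(t,h)=\log\log M_{j,k}(t,f)+\eta(t)$ with $\eta(t)\to0$ as $t\to\infty$. Fixing $\rho_2>\rho_1$ with $|\eta(t)|\le\frac14 B_0\log\alpha_0$ for $t\ge\rho_2$, we obtain, for $s\ge\rho_2$ and $r>\alpha_0^2 s$,
\[ \log\frac{\log M_{j,k}(r,f)}{\log M_{j,k}(s,f)}\ge B_0\log\frac rs-\tfrac12 B_0\log\alpha_0\ge\tfrac12 B_0\log\frac rs, \]
the last inequality because $r/s\ge\alpha_0$. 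This is \eqref{Mgrows} for the pair $(j,k)$, with $\alpha=\alpha_0^2$, $B=\tfrac12 B_0$ and $\rho_0=\max\{\rho_2,\rho_S\}$. As $\mathcal{P}$ is finite, replacing these by the largest of the finitely many values of $\alpha$ and $\rho_0$ and the smallest value of $B$ gives constants valid for all $j,k\in\mathcal{P}$ simultaneously.

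The step I expect to be the main obstacle is the construction of $N$ in the second paragraph and the verification that $G$ is genuinely an unbounded \emph{domain} whose complement contains an unbounded curve. The tapering of $N$ is essential, since the set where $|g-a|<1$ need not contain a tube of fixed radius about $\gamma$; and if the asymptotic path is irregular one may first have to replace it by a tamer path, or argue more carefully about the topology of $\R^d\setminus\overline N$. These points are handled by the treatment of asymptotic paths in \cite{MR686641} and in the corresponding step of \cite{MR3265357}, which I would follow closely; once $G$ is in place, the remaining work is routine manipulation of double logarithms.
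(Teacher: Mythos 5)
Your proposal follows essentially the same route as the paper: conjugate by $\phi_j,\phi_k$ to work near $\infty$, use Lemma~\ref{lemma:asympt} (and $\nu\ge 1$) to obtain a path to $\infty$ on which $g$ is bounded, and apply the Rickman--Vuorinen estimate (Lemma~\ref{RVTheorem3.3}) to a suitable unbounded domain $G$. The one genuine difference is the choice of $G$, and here the paper's choice is cleaner than yours: rather than excising a tapering tubular neighbourhood $\overline{N}$ of the path --- which forces you to worry, as you note, about whether $G$ is connected, whether $\overline{N}$ can be kept away from $\{|x|=2r_1\}$ and thin enough, and whether $M_G$ agrees with the ordinary maximum modulus only for large $r$ --- the paper simply replaces the asymptotic path by a \emph{simple} subcurve $\Gamma$ accumulating only at $\infty$ and puts $G:=\{|x|>\rho_S\}\setminus\Gamma$. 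Deleting a simple arc with a free interior endpoint does not disconnect $\{|x|>\rho_S\}$; $\R^d\setminus G\supset\Gamma$ is an unbounded curve, so \eqref{RVcondition} holds; the boundary condition \eqref{preveq} for a constant multiple of $g$ holds because $g$ is continuous and bounded on $\Gamma$; and, since $\Gamma$ has empty interior, $M_G(r,g)=M(r,g)=M_{j,k}(r,f)$ exactly for every $r>\rho_S$, with no ``for large $r$'' caveat. Your final double-log manipulation (rearranging the Rickman--Vuorinen inequality, absorbing the additive constant by enlarging $\alpha$, and then taking extrema over the finitely many pairs $(j,k)$) is correct and is what the paper does, only stated there more tersely as ``the result follows, by increasing $\alpha$ if necessary.''
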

\begin{proof}
Since $\mathcal{P}$ is a finite set, it will suffice to prove the result for some fixed choice of $j,k \in\mathcal{P}$. By \eqref{rhoSdef}, the map $g := \phi_k \circ f \circ \phi_j^{-1}$ is quasiregular on the set $\{x\in\R^d : |x| \ge \rho_S\}$. It follows from Lemma~\ref{lemma:asympt}, together with the fact that $\nu > 0$, that there is a curve which accumulates only at infinity, $\Gamma$ say, on which $g$ is bounded. By taking a subcurve of $\Gamma$, if necessary, we can assume that $\Gamma$ is simple, and that $\Gamma \subset \{ x \in \R^d : |x| > \rho_S \}$. It follows that $$G := \{ x \in \R^d : |x| > \rho_S \} \setminus \Gamma$$
is a domain. Note that $g$ is unbounded on $G$ because $g$ has an essential singularity at infinity.
 We deduce by Lemma~\ref{RVcondition}, applied to a constant multiple of $g$, that there exist $\rho_0>\rho_S$, $\alpha>1$ and $B,C>0$ such that, for $r/\alpha > s > \rho_0$,
\[  \log \frac{\log M_{j,k}(r,f)}{\log M_{j,k}(s,f)} = \log \frac{\log M_G(r,g)}{\log M_G(s,g)} \ge 2B\log\left(\frac{r}{s}\right) - C.   \]
The result follows, by increasing $\alpha$ if necessary.
\end{proof}
Next, we apply Lemma~\ref{lemma:grows} to compare the rate of growth of two related maximum modulus sequences.
\begin{lemma}\label{lemma:En}
Suppose that $f : \widehat{\R^d}\setminus S \to \widehat{\R^d} \setminus S$ is a quasiregular map of \mbox{$S$-transcendental} type. Let $\rho_0$, $\alpha$ and $B$ be as given by Lemma~\ref{lemma:grows}. Suppose that $R>R(f)$ satisfies
\begin{equation}\label{Rdef}
R > \max\{\rho_0, \alpha, \exp(2/B) \} \quad \mbox{and} \quad \frac{\log M_{j,k}(R,f)}{\log R} \ge \exp(1), \qfor j,k\in\mathcal{P}.
\end{equation}
Let $e \in \mathcal{P}^\No$ and let $(R_n)_{n\in\No}$ be the maximum modulus sequence for $e$ starting at~$R$ and $(S_n)_{n\in\No}$ be the maximum modulus sequence for $\sigma(e)$ starting at $R$. Then
\begin{equation}\label{ineq:En}
  \log \frac{\log R_n}{\log S_{n-1}} \ge E_n,
\end{equation}
for $n\ge 1$, where $(E_n)$ denotes the iterated exponential sequence given by $E_1=1$ and $E_n=\exp(E_{n-1})$ for $n\ge2$.
\end{lemma}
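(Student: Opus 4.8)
The plan is to prove \eqref{ineq:En} by induction on $n\ge1$, with Lemma~\ref{lemma:grows} as the engine. The observation that makes this work is that the two maximum modulus sequences are driven by the \emph{same} functions, merely offset by one index: unwinding the definitions, $R_{n+1} = M_{e_n,e_{n+1}}(R_n,f)$ while $S_n = M_{e_n,e_{n+1}}(S_{n-1},f)$. For the base case $n=1$ we have $S_0=R$ and $R_1 = M_{e_0,e_1}(R,f)$, so \eqref{ineq:En} asserts $\log\bigl(\log M_{e_0,e_1}(R,f)/\log R\bigr)\ge 1 = E_1$, which is precisely the second part of the hypothesis \eqref{Rdef} on $R$.

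For the inductive step, assume \eqref{ineq:En} holds for some $n\ge1$; equivalently, $\log R_n \ge \exp(E_n)\log S_{n-1}$. I would first check that Lemma~\ref{lemma:grows} may be applied with $r=R_n$, $s=S_{n-1}$, $j=e_n$, $k=e_{n+1}$. Since the maximum modulus sequences are increasing (Lemma~\ref{lemm:mmseq}\ref{seqgrows}), $S_{n-1}\ge S_0 = R > \rho_0$. Since $E_n\ge E_1=1$, the inductive hypothesis gives $R_n \ge S_{n-1}^{\exp(E_n)}\ge S_{n-1}^{e}$, hence $R_n/S_{n-1}\ge S_{n-1}^{e-1}\ge R^{e-1} > R > \alpha$, using $e-1>1$, $R>1$ and \eqref{Rdef}. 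So $R_n/\alpha > S_{n-1} > \rho_0$, and Lemma~\ref{lemma:grows} yields
\[
\log\frac{\log R_{n+1}}{\log S_n}
= \log\frac{\log M_{e_n,e_{n+1}}(R_n,f)}{\log M_{e_n,e_{n+1}}(S_{n-1},f)}
\ge B\log\frac{R_n}{S_{n-1}}.
\]

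To finish, I would bound the right-hand side below by $E_{n+1}=\exp(E_n)$. From $\log R_n\ge \exp(E_n)\log S_{n-1}$ we get $\log(R_n/S_{n-1})\ge(\exp(E_n)-1)\log S_{n-1}\ge(\exp(E_n)-1)\log R$; combining with $B\log R > 2$ (which follows from $R>\exp(2/B)$) and with $2(\exp(E_n)-1)\ge\exp(E_n)$ (valid since $\exp(E_n)\ge e>2$) gives
\[
B\log\frac{R_n}{S_{n-1}} \ge B(\exp(E_n)-1)\log R > 2(\exp(E_n)-1) \ge \exp(E_n) = E_{n+1},
\]
and hence $\log(\log R_{n+1}/\log S_n)\ge E_{n+1}$, closing the induction. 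The argument is essentially bookkeeping once Lemma~\ref{lemma:grows} is in hand; the only delicate point is checking at each stage that $R_n$ and $S_{n-1}$ are separated multiplicatively by more than $\alpha$, which is exactly where one needs the inductive hypothesis itself rather than merely the monotonicity of the sequences. I do not anticipate any obstacle beyond this.
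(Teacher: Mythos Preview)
Your proof is correct and follows essentially the same approach as the paper: induction on $n$, with the base case given directly by \eqref{Rdef}, and the inductive step obtained by applying Lemma~\ref{lemma:grows} to the pair $(R_n,S_{n-1})$ after verifying the separation $R_n/S_{n-1}>\alpha$ via the inductive hypothesis, then closing with the chain $B(\exp(E_n)-1)\log R > 2(\exp(E_n)-1)\ge \exp(E_n)$. The only cosmetic difference is that the paper indexes the inductive step as $n-1\Rightarrow n$ rather than $n\Rightarrow n+1$.
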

Note that by Lemma \ref{lemma:gen}\ref{Mbig}, all sufficiently large $R$ satisfy condition \eqref{Rdef} above.
\begin{proof}[Proof of Lemma~\ref{lemma:En}]
The $n=1$ case holds because
\[ \log \frac{\log R_1}{\log S_0} = \log \frac{\log M_{ e_0, e_{1}}(R,f)}{\log R} \ge 1 = E_1, \]
by \eqref{Rdef}. So we assume that $n\ge 2$ and that
\[ \log \frac{\log R_{n-1}}{\log S_{n-2}} \ge E_{n-1}, \]
and we aim to prove \eqref{ineq:En}. Our assumption implies that $$R_{n-1}/S_{n-2} \ge {S_{n-2}}^{E_n-1} \ge R^{E_n-1} > \alpha.$$ Using \eqref{Rdef} and Lemma~\ref{lemma:grows} now leads to
\begin{align*}
  \log \frac{\log R_n}{\log S_{n-1}} &=  \log \frac{\log M_{ e_{n-1}, e_{n}}(R_{n-1},f)}{\log M_{ e_{n-1}, e_{n}}(S_{n-2},f)} \\
  &\ge B\log \frac{R_{n-1}}{S_{n-2}} \\
  &\ge B(E_n-1)\log R \\
  &\ge 2(E_n-1) \ge E_n. \qedhere
\end{align*}
\end{proof}
%
%
\begin{proof}[Proof of Theorem~\ref{theo:AJ}]
Suppose that $f : \widehat{\R^d}\setminus S \to \widehat{\R^d} \setminus S$ is a quasiregular map of \mbox{$S$-transcendental} type. Let $e \in \mathcal{P}^\No$. Suppose first that $x \in J(f)$, and let $U\subset \widehat{\R^d}\setminus S$ be a neighbourhood of $x$. It follows from Proposition~\ref{prop:BO}, and the definition of the Julia set, that there exists $\xi_0 \in U$ and $n_0 \in \N$ such that $f^{n_0}(\xi_0) \in BO(f)$. We deduce that $U$ meets the complement of $A_e(f)$. By Theorem~\ref{theo:A1}, $A_e(f)$ contains an $S$-unbounded component, and so is infinite. Hence, we can deduce in the same way that there exists $\xi_1 \in U$ and $n_1 \in \N$ such that $f^{n_1}(\xi_1) \in A_e(f)$. We deduce, by Theorem~\ref{theo:A2}\ref{ainv}, that $U$ meets $A_e(f)$. Since $U$ was arbitrary, we conclude that $J(f) \subset \partial A_e(f)$. It follows by a similar argument that $J(f) \subset \partial A(f)$.

We next show that $\partial A_e(f) \subset J(f)$, for each $e \in\mathcal{P}^{\No}$. The proof of this fact is similar to the proof of \cite[Theorem 1.2]{MR3265357}, although we give sufficient detail to show how that proof transfers into our setting. We note that a key difference between \cite[Theorem 1.2]{MR3265357} and Theorem~\ref{theo:AJ} is that, in our setting, a result on the growth of the generalised maximum modulus functions (equation (\ref{Mgrows})) always holds, whereas for the transcendental type quasiregular self-maps of $\R^d$ studied in \cite{MR3265357}, an analogous property needs to be taken as an additional hypothesis.


We now let $e \in \mathcal{P}^{\No}$, and suppose, by way of contradiction, that there exists a point $x_0 \in \partial A_e(f)$ such that $x_0 \notin J(f)$. Since $J(f)$ is closed, we can let $r>0$ be sufficiently small that $$B(x_0, 4r) \cap (J(f) \cup S) = \emptyset.$$ We can also assume, since $J(f) = J_{\operatorname{cap}}(f)$ (see Proposition~\ref{J=J_cap}), that $r$ is sufficiently small that the iterates of $f$ omit a set of positive capacity in $B(x_0, 4r)$.

Since $x_0 \in \partial A_e(f)$, we can choose points
$$x_A \in B(x_0,r) \cap A_e(f) \quad\text{and}\quad x_N \in B(x_0,r) \setminus A_e(f).$$
Take $R>R(f)$ sufficiently large that condition \eqref{Rdef} is satisfied.
From Theorem~\ref{theo:A2}\ref{aindepR} and Proposition~\ref{ldown}, we see that there exist $\ell \in \No$ and $k \in \No$ such that $x_A\in A_{\sigma^k(e)}^\ell(f,R)$. Let $(R_n)_{n\in\No}$ be the maximum modulus sequence for $\sigma^k(e)$ starting at~$R$, and let $(S_n)_{n\in\No}$ be the maximum modulus sequence for $\sigma^{k+1}(e)$ starting at~$R$.
Lemma~\ref{lemma:En} tells us that $R_n$ grows much faster than $S_{n-1}$; our aim now is to seek a contradiction to this fact.

By the definition of $A_{\sigma^k(e)}^\ell(f,R)$, we have that
\[ |f^{\ell+n}(x_A)|_{e_{k+n}} \geq R_n, \qfor n\in\No. \]
We claim that, on the other hand, the fact that $x_N\notin A_e(f)$ implies that there exists an infinite set $\mathcal{N} \subset \N$ such that
\begin{equation}
\label{newclaim}
|f^{\ell+n}(x_N)|_{e_{k+n}} < S_{n-1}, \qfor n \in \mathcal{N}.
\end{equation}
For, suppose that (\ref{newclaim}) does not hold, in which case there exists $n_0 \in \N$ such that
\begin{equation}
\label{newclaim2}
|f^{\ell+n}(x_N)|_{e_{k+n}} \geq S_{n-1}, \qfor n \geq n_0.
\end{equation}
Set $\ell' = \ell + n_0$, $k' = k + n_0$ and $S'_n = S_{n_0-1+n}$, for $n \in \No$. It then follows from (\ref{newclaim2}) that
$$
|f^{\ell'+n}(x_N)|_{e_{k'+n}} \geq S'_{n}, \qfor n \geq 0.
$$
Since $(S_n)_{n\in\No}$ is the maximum modulus sequence for $\sigma^{k+1}(e)$ starting at $R$, it follows that $(S'_n)_{n\in\No}$ is the maximum modulus sequence for $\sigma^{k'}(e)$ starting at $S'_0$. It follows that $x_N \in A^{\ell'}_{\sigma^{k'}(e)}(f, S'_0)$ and so $x_N \in A_e(f)$. This contradiction completes the proof of our claim (\ref{newclaim}).

Next, for $n\in\mathcal{N}$, we set
$$
X_{A,n} :=  \{ x \in B(x_0, 2r) : |f^{\ell+n}(x)|_{e_{k+n}} \geq R_{n} \}
$$
and
$$
X_{N,n} := \{ x \in B(x_0, 2r) : |f^{\ell+n}(x)|_{e_{k+n}} \leq S_{n-1} \}.
$$
For $I \in \{A, N\}$ and $n\in\mathcal{N}$, we denote the component of $X_{I,n}$ that contains $x_I$ by $Y_{I,n}$. We assert that the closures of $Y_{A,n}$ and $Y_{N,n}$ both meet $\partial B(x_0, 2r)$. In fact, Proposition~\ref{bflmpunctured} tells us that all components of $(f^{\ell+n})^{-1}(E)$ are $S$-unbounded when $E$ is either $\{y\in \widehat{\R^d} : |y|_{e_{k+n}}\ge R_n\}$ or $\{y\in \widehat{\R^d} : |y|_{e_{k+n}}\le S_{n-1}\}$, and this implies our assertion.
 In particular, the connected sets $Y_{A,n}$ and $Y_{N,n}$ both have diameter at least $r$. 



Now, recalling notation from Section~\ref{sect:modulus}, let $\Gamma_n := \Delta(Y_{A,n}, Y_{N,n}; B(x_0, 2r))$. Following \cite{MR3265357}, we note that by \cite[Lemma~5.42]{MR950174}, there exists $\epsilon = \epsilon(d) > 0$ such that \begin{equation}
\label{Mlower}
M(\Gamma_n) \geq \epsilon, \qfor n\in\mathcal{N}.
\end{equation}

On the other hand, suppose that $n \in\mathcal{N}$, and let $F$ be the quasiregular map $F := \phi_{e_{k+n}} \circ f^{\ell+n}$.
For exactly the same reasons as in \cite{MR3265357}, it can be deduced that there is a constant $L = L(d) > 0$ such that
\begin{equation}\label{Mupper}
M(\Gamma_n) \leq L K_O(F) K_I(F) \left(\log\frac{\log R_n}{\log S_{n-1}}\right)^{1-d}.
\end{equation}
Now, by (\ref{Keq}), we have $K_I(F) \leq K_I(f)^{\ell+n}$ and $K_O(F) \leq K_O(f)^{\ell+n}$. Hence, with $K = (K_O(f)K_I(f))^{1/(d-1)}$ we obtain, using (\ref{Mlower}) and (\ref{Mupper}), that there is a constant $L' = L'(d,f,\ell) > 0$ such that
$$
\log\frac{\log R_{n}}{\log S_{n-1}} \leq L' K^n.
$$

This is in contradiction to \eqref{ineq:En} for large $n\in\mathcal{N}$, and so $\partial A_e(f) \subset J(f)$ as required. \\

It remains to show that $J(f) = \partial A(f)$. Since we have that $J(f) \subset \partial A(f)$, we need to show that $\partial A(f) \subset J(f)$. Suppose, by way of contradiction, that there exists $x \in \partial A(f) \cap QF(f)$. Let $U \subset QF(f)$ be a neighbourhood of $x$. Then $U$ meets $A(f)$, and so there exists $e \in \mathcal{P}^\No$ such that $U$ meets $A_e(f)$. Since $\partial A_e(f) = J(f)$, it follows that $U \subset A_e(f)$, and so $U \subset A(f)$. This contradiction completes the proof.
\end{proof}
%
%
%
%
%
%
\section{Proof of the final part of Theorem~\ref{theo:J}}
\label{section:julia2}
Suppose that $f : \widehat{\R^d}\setminus S \to \widehat{\R^d} \setminus S$ is a quasiregular map of \mbox{$S$-transcendental} type. In this section we prove Theorem~\ref{theo:J}\ref{jnoSbounded}, which states that $J(f)$ has no $S$-bounded components.

Suppose, by way of contradiction, that $K$ is an $S$-bounded component of $J(f)$. Since $J(f)$ is closed, it follows by Proposition~\ref{prop:comps}, with $X = J(f)$, that there is an $S$-bounded domain $V$ such that $K \subset V$ and $\partial V \subset QF(f)$.

By Corollary~\ref{corr:J} there exists $e \in \mathcal{P}^{\No}$  such that $A_e(f)\subset J(f)$.
Each point of $K$ is in $J(f)$, and so, by Theorem~\ref{theo:AJ}, is also in $\partial A_e(f)$. It follows that $V$ meets $A_e(f)$. Hence $\partial V$ also meets $A_e(f)$, because, by Theorem~\ref{theo:A1}, every component of $A_e(f)$ is $S$-unbounded. However, $\partial V$ lies in $QF(f)$ and $A_e(f)$ does not meet $QF(f)$. This contradiction completes the proof.
%
%
%
%
\section{Proof of Theorem~\ref{theo:F}}
\label{section:fatou}
Suppose that $f : \widehat{\R^d}\setminus S \to \widehat{\R^d} \setminus S$ is a quasiregular map of \mbox{$S$-transcendental} type. We first show that all quasi-Fatou components of $f$ are $S$-full. Suppose, by way of contradiction, that $U$ is a quasi-Fatou component of $f$ that is $S$-hollow. Let $X$ be an $S$-bounded complementary component of $U$. Clearly $X$ meets $J(f)$ and so 
$X$ contains a component of $J(f)$. Hence this component of $J(f)$ is $S$-bounded, in contradiction to Theorem~\ref{theo:J}\ref{jnoSbounded}.

In the remainder of the proof, for simplicity, complementary components are taken in $\widehat{\R^d}$. We need to show that the number of quasi-Fatou components of $f$ having more than one complementary component is at most $\nu$. Suppose, by way of contradiction, that there is a set $V$ that is the union of $\nu+1$ quasi-Fatou components, each of which has more than one complementary component. It can be shown by an inductive argument that $V$ has at least $\nu+2$ complementary components.

Since $S$ has $\nu+1$ elements, it follows that $V$ has a complementary component, $X$ say, which does not meet $S$. Then $X$ certainly meets $J(f)$ and so 
$X$ contains a component of $J(f)$. Hence this component of $J(f)$ is $S$-bounded, in contradiction to Theorem~\ref{theo:J}\ref{jnoSbounded}. \\

%
%
%
%
%
\noindent \emph{Acknowledgment:} The authors are grateful to David Mart{\'{\i}}-Pete for many helpful discussions.
%
%
%
%
%
%

\end{document}